\documentclass[11pt,reqno]{amsproc}

 % rm
\linespread{1.05}        % Palatino needs more leading

\usepackage[sc]{mathpazo}\renewcommand{\mathbf}{\mathbold}
\normalfont
\usepackage[T1]{fontenc}

\usepackage{misc/packages}
% mathbb shortcuts

\newcommand{\C}{\mathbb{C}}
\newcommand{\Z}{\mathbb{Z}}
\newcommand{\N}{\mathbb{N}}
\newcommand{\Q}{\mathbb{Q}}
\newcommand{\R}{\mathbb{R}}

\newcommand{\bA}{\mathbb{A}}
\newcommand{\bB}{\mathbb{B}}
\newcommand{\bU}{\mathbb{U}}

\newcommand{\bM}{\mathbb{M}}

\newcommand{\bI}{\mathbb{I}}

% mathfrak shortcuts

% mathcal shortcuts

\newcommand{\cC}{\mathcal{C}}

\newcommand{\apl}{\mathcal{A}_{PL}}

% mathsf shortcuts

\newcommand{\Set}{\mathsf{Set}}

\newcommand{\sSet}{\mathsf{sSet}}
\newcommand{\op}{\mathsf{op}}

\newcommand{\Hom}{\mathsf{Hom}}
\newcommand{\sCh}{\mathsf{Ch}^*_\Q}

\newcommand{\sVec}{\mathsf{Vec}_\Q}
\newcommand{\Ker}{\mathrm{Ker} \,}

% mathbbm shortcuts

% astrology shortcuts

% mathrm shortcuts

% sam's notations

\newcommand{\Ch}{\mathsf{Ch}^*}
\newcommand{\CDGA}{\mathsf{CGDA}}
\newcommand{\Mod}{\mathsf{Mod}}
\newcommand{\Di}{\mathbb{D}}
\newcommand{\Fun}{\mathsf{Fun}}
\newcommand{\I}{\mathbb{I}}
\newcommand{\J}{\mathbb{J}}
\newcommand{\Sp}{\mathbb{S}}

\newcommand{\W}{\mathbb{W}}
\newcommand{\X}{\mathbb{X}}
\newcommand{\K}{\mathbb{K}}
\newcommand{\Y}{\mathbb{Y}}
\newcommand{\V}{\mathbb{V}}
\newcommand{\Inj}{\mathsf{Inj}}
\newcommand{\Cof}{\mathsf{Cof}}
\newcommand{\Cell}{\mathsf{Cell}}

\newcommand{\colim}{\mathsf{colim}}

\newcommand{\DDelta}{\mathbf{\Delta}}

\newcommand{\tame}{\mathsf{tame}}
\newcommand{\gr}{\mathsf{gr}}
\newcommand{\Lan}{\mathsf{Lan}}

%arrows

\newtheorem*{theorem*}{Theorem}
\newtheorem{theorem}{Theorem}[section]

\newtheorem{lemma}[theorem]{Lemma}
\newtheorem{corollary}[theorem]{Corollary}

\newtheorem{proposition}[theorem]{Proposition}
\newtheorem{example}[theorem]{Example}

\theoremstyle{definition}
\newtheorem{definition}[theorem]{Definition}

\theoremstyle{remark}
\newtheorem{remark}[theorem]{Remark}
\newtheorem{notation}[theorem]{Notation}

\numberwithin{equation}{section}

\title{Interval-sphere model structures}
\date{\today}
\author{Kathryn Hess, Samuel Lavenir, Kelly Maggs}

\begin{document}

\maketitle
% \vspace{-32pt}
\begin{center}
\begin{small}
    \today
\end{small}
\end{center}

\maketitle

\begin{abstract}
    The bedrock of persistence theory over a single parameter is decomposition of persistence modules into intervals. In \cite{hess2024celldecompositionspersistentminimal}, the authors leveraged interval decomposition to produce a cell decomposition of the minimal model of a simply connected copersistent space. The key tool was a technique called interval surgery, which involves the gluing of intervals to a persistent CDGA by means of algebraic cell attachments. In this article, we define a compact, combinatorial model categorical structure that contextualizes interval surgery as a genuine model-categorical cell attachment. We show that our new model structure is neither the injective nor the projective one and that cofibrancy is closely linked to the notion of tameness in persistence theory and algebraic notions of compactness. 
\end{abstract}

\tableofcontents

\section{Introduction}

For a field $\mathbf{k}$, functors $\mathbb{V} \colon (\R_{+},\le) \to\mathsf{Vec}_{\mathbf{k}}$ from the poset of non‑negative reals to the category of $\mathbf{k}$-vector spaces are called (real‑indexed) \emph{persistence modules}.  They play a central role in topological data analysis (TDA), as algebraic invariants of the homology of sub‑level set filtrations of compact manifolds \cite{Barannikov_1994,Carlsson05,Oudot_2015} and of filtered Vietoris–Rips complexes of finite metric spaces \cite{Robins_1999,Edelsbrunner_Letscher_Zomorodian}.  The atomic pieces of this theory are the \emph{interval modules}
\[
  \bI_{[s,t)} \colon (\R_{+},\le)\;\longrightarrow\;\mathsf{Vec}_{\mathbf{k}},
  \quad
  \bI_{[s,t)}(r)=
  \begin{cases}
    \mathbf{k},&s\le r< t,\\
    0,&\text{otherwise.}
  \end{cases}
\]
Under mild tameness assumptions, persistence modules decompose into direct sums of interval modules:  \cite{Webb_1985,Carlsson05,chazal2016structure,Crawley-Boevey_2015} as
\[
  \mathbb{V} \;\cong\;\bigoplus_{\gamma}\bI_{[s_{\gamma},\,t_{\gamma})}.\] 

In \cite{hess2024celldecompositionspersistentminimal} we introduced \emph{interval surgery}, a procedure for constructing persistent algebraic objects in arbitrary categories by iteratively attaching interval modules as cells.  In the setting of persistent commutative differential graded algebras (CDGAs), this procedure yields explicit Sullivan minimal models filtered by interval attachments, under simple‑connectedness and tameness hypotheses.  Interval modules thus act as the fundamental “cells’’ of a broad class of persistent algebraic structures.

In this article we provide the model‑categorical underpinnings for interval surgery.  We equip the category of persistent cochain complexes over the rationals with a new combinatorial, compactly generated model structure, the \emph{interval–sphere model structure} (Theorem \ref{modelstructurechaincomplexes}, \Cref{cofib-gen}). This model structure is neither the injective nor the projective one, largely due to the topology of the indexing category $\R_+$ (\Cref{not-projective}, \Cref{non-injective-model-example}). Its cofibrant generators are precisely the interval attachments, meaning that cofibrant objects are closely related to those built from intervals. We show that every monomorphism between tame objects is a cofibration (\Cref{injectivity-compactness}), as a consequence of the close relation between tameness and interval-decomposability.

We then transfer the interval–sphere structure along the standard free–forgetful adjunction to persistent CDGAs (\Cref{modelstructureCDGA}), enabling us to exhibit the the persistent Sullivan minimal model as a genuine cellular presentation (\Cref{persistent-min-moodel}).  Finally, we prove that there exists a Quillen pair between the category of persistent CDGAs (equipped with the interval–sphere structure) and that of copersistent simplicial sets (equipped with the \(\Q\)‑projective structure) in Theorem \ref{persistent-quillen-pair}.

Where much of the TDA literature generalizes the \emph{indexing} category (e.g.,\ to multiparameter persistence \cite{Cerri_Fabio_Ferri_Frosini_Landi_2013,Blumberg_Lesnick_2022,Blumberg_Lesnick_2023}), we instead enrich the \emph{target} category, while fixing as source poset \(\R_+\).  By treating interval modules as cells at the cochain level, we lay a blueprint for transporting both the interval–sphere model structure and the interval surgery process into diverse algebraic and homotopical contexts.

\subsection*{Related work} The idea of focusing on persistent chain/cochain complexes goes back to the origins of persistence theory \cite{Carlsson05}. The use of model categories to describe persistent chain complexes, with a focus on tameness and decomposing cofibrant objects, is more recent \cite{giunti_2021,chacholski2024abelianmodelstructurestame,Chachlski2023}. Model categories have also been used in the context of stability \cite{Blumberg_Lesnick_2023,Lanari_Scoccola_2023}.

\subsection*{Acknowledgements} The authors thank Jérôme Scherer for his helpful, detailed feedback on earlier versions of this manuscript.  K.M. was supported by the European Union’s Horizon 2020 Research and Innovation Program under Marie Skłodowska-Curie Grant Agreement No 859860. S.L. was supported by Swiss National Science Foundation, grant/award number: 200020 18858. 

\begin{comment}
For a finite sequence of finite simplicial complexes
$ S_0 \xrightarrow{f_0} \ldots \xrightarrow{f_{n-1}} S_n,$
applying the rational homotopy group functor $\pi_k( \, \cdot \, ) \otimes \Q$ for any $k \geq 0$ to induces a finite sequences of rational vector spaces
\begin{equation} \label{rational_persistence_module}
\pi_k(S_0) \otimes \Q \xrightarrow{(f_0)_*} \ldots \xrightarrow{(f_{n-1})_*} \pi_k(S_n) \otimes \Q.
\end{equation}
The primary goal of this document is provide a roadmap to compute the persistence diagram of \ref{rational_persistence_module} in the case that each $S_i$ is simply connected.\\

\textbf{Related Work}
\end{comment} 

\subsection{Notation and conventions}

\begin{itemize}
    \item We apply cohomological conventions, i.e., differentials increase degree.
    \item We use the word \textit{space} as a synonym for \textit{simplicial set}.
    \item All vector spaces are over the field $\Q$ of rationals.
    %\item  We write $V^\vee$ for the dual vector space $\Hom_\Q(V, \Q)$.
    \item We denote by $\Ch_\Q$ the category of non-negatively graded cochain complexes over $\Q$ and by $\CDGA_\Q$ the category of commutative differential graded algebras (CDGAs) over $\Q$, i.e., the category of commutative monoids in $\Ch_\Q$.
    \item For $(C,d) \in \Ch_\Q$, the shifted complex $(C[n],d')$ is defined by $C[n]^k=C^{k-n}$ with differentials $d'_k=(-1)^n d_k$.
    \item A CDGA $\mathcal{A}$ is said to be connected if $H^0(\mathcal{A})=\Q$ and simply-connected if $H^1(\mathcal{A}) = 0$ additionally. 
    \item We use the term \textit{finite type} in three settings.
    \begin{itemize}
        \item A simplicial set is finite type if it has finitely many non-degenerate simplices.
        \item A cochain complex is finite type if it is degree-wise finite dimensional.
        \item A CDGA is finite type if its cohomology is degree-wise finite dimensional.
    \end{itemize}
\end{itemize}

\section{Persistent cochain complexes}

In this section, we review the background material on persistent cochain complexes. We highlight notions of tameness introduced in \cite{giunti_2021,chacholski2024abelianmodelstructurestame}. We place emphasis on the  commutative algebra perspective on persistence \cite{Carlsson05,miller2020homologicalalgebramodulesposets}, and particular, notions of algebraic compactness pivotal to understanding the structure of cofibrant objects in later section. K.M. was supported by the European Union’s Horizon 2020 Research and Innovation Program under Marie Skłodowska-Curie Grant Agreement No 859860. S.L. was supported by Swiss National Science Foundation, grant/award number: 200020 18858. 

\subsection{Cochain complexes} We start by setting out notation for usual cochain complexes of vector spaces. 

\subsubsection{Cochain spheres and disks} To every vector space $V$, we can associate sphere $S^k(V)$ and disk $D^k(V)$ cochain complexes for every integer $k\geq 0$. The complex $S^k(V)$ consists of a copy of $V$ in degree $k$, while the complex $D^k(V)$ consists of a copy of $V$ in each of degrees $(k-1)$ and $k$, where the only non zero differential is the identity. When $k=0$, we adopt the convention that $D^0(V)=0$. We also write $S^k=S^k(\Q)$ and $D^k=D^k(\Q)$. Similarly, given a graded $\Q$-vector space $V=\bigoplus_{k\geq 0} V_k$, we can define associated sphere $S(V)$ and disk $D(V)$ complexes. These are cochain complexes  defined by $$
S(V)=\bigoplus_{k\geq 0} S^k(V_k) 
\:\:\:\:\:\:\:,\:\:\:\:\:\:\:
D(V)=\bigoplus_{k> 0} D^k(V_k).
$$
Note that there are evident inclusions $S(V) \subseteq D(V)$ and that $D(V)/S(V)=S(V[-1])=S(V)[-1]$ for any choice of graded vector space $V$. 

\subsubsection{Model structures}
\label{modelstructurechaincomplexes}
There is a well known (projective) model structure on the category $\Ch_\Q$ of cochain complexes, of which the fibrations are the degree-wise surjective maps, and the weak equivalences are the quasi-isomorphisms, i.e., maps inducing isomorphisms on cohomology (cf. \cite{hess05}). The cofibrations can be described as those cochain maps that are injective in degrees $>0$. This model structure is combinatorial \cite[Prop.3.13]{BEKE2000}, i.e. locally presentable and generated by a small set of cofibrations, with the following sets of generating cofibrations and trivial cofibrations.
$$
\begin{aligned}
I &= \{ \: S^k \hookrightarrow D^k \:,\: 0\to S^0 \: | \: k\in \N \:\} \\
J &= \{ \: D^k \to 0 \: | \: k\in \N \:\}.
\end{aligned}
$$

\subsection{Persistence theory and tameness} 

Before generalizing the previous construction from the level of maps to persistent CDGAs, we review the basic notation and terminology about persistent objects in a category. We recall the notion of tame persistent objects from \cite{giunti_2021},  introduce the persistent spheres and disks, and finish with a description of compact objects in several persistent categories of interest.

\begin{notation} Let $\R_+$ be category associated to the poset $[0,\infty).$
   \begin{itemize} 
   \item The category $p\cC$ of \textit{persistent objects} in a category $\cC$ is the functor category $\Fun(\R_+, \cC)$.
   \item If letters $X,Y,Z, \cdots$ are used for the objects of the category $\cC$, we use corresponding boldface symbols $\X,\Y,\Z, \cdots $ to name the objects of $p\cC$.
   \item Dually, we label the functor category $\Fun(\R_-, \cC)$ of \textit{copersistent objects }as $p^*\cC$, where $\R_-=(-\infty, 0]\cong \R_+^\op$ is the dual poset to $\R_+$. 
   \end{itemize}
\end{notation}

\subsection{Tameness}

The use of $\R_+=[0,\infty)$ as the indexing category is central to the definition of the interleaving distance and stability theorems. However, in practice and applications, we take as input a finite sequence of spaces/chain complexes/vector spaces, so the resulting functor changes at only finitely many index values. The following definitions of discretizations and tameness are presented in \cite{giunti_2021}, and formalise this notion. 

\begin{definition}
    Let $\X \in pC$ be a persistent object of $\cC$.
    \begin{enumerate}
        \item A finite sequence $t_0 < t_1 < \ldots < t_n$ of non-negative reals is said to \textit{discretize} $\X$ if a morphism $\X(s<t)$ may fail to be an isomorphism only if there exists an $i$ such that $s<t_i\leq t$.
        \item We say that $\X$ is \textit{tame} if it admits a discretization. We denote by $(p\cC)^\tame \subseteq p\cC$ the full subcategory of $p\cC$ on the tame objects.
    \end{enumerate}
\end{definition}

The data of a tame object $\X\in (p\cC)^\tame$ amounts to a finite sequential diagram $$
    X_0 \to X_1 \to \cdots \to X_n
    $$
    in $\cC$, together with a sequence of non-negative reals $t_0 < t_1 < \ldots < t_n$. Given such sequences, the associated persistent object $\X$ is obtained by the left Kan extension $$
    \begin{tikzcd}
        {\{t_0  < \ldots < t_n\}} \ar[dr, "X"]\ar[d, "t"'] & \\
        {\R_+} \ar[r, "{\X=\Lan_t X}"'] & \cC
    \end{tikzcd}
    $$
    along the poset inclusion $\{t_0  < \ldots < t_n\} \subseteq \R_+$. The Kan extension is guaranteed to exist without assumptions on $\cC$, as is explained in \cite[Section~2.1]{giunti_2021}. This extension is constant over the half-open intervals $[t_{i}, t_{i+1})$; its components are given by $\X(t)=X_{t_i}$ when $t\in [t_i, t_{i+1})$, $\X(t)=X_{t_n}$ when $t\geq t_n$ and $\X(t)=0$ when $t<t_1$.

\subsection{Persistence Modules}\label{persistencesection}  A persistence module \cite{chazal2016structure} is a persistent object in the category $\mathsf{Vec}_\Q$ of $\Q$-vector spaces. In other words, a persistence module is a functor $\bM : \R_+ \to \mathsf{Vec}_\Q.$ A persistence module can equivalently be described as a graded module over a certain $\R_+$-graded ring, as follows. We denote by $\Q[X^{\R_+}]$ the ring of formal polynomials in one variable $X$ with \textit{non-negative real exponents}. Its elements are formal sums $$
\sum_{i=0}^n a_i X^{t_i}
$$
where $n\geq 0$ is an integer, the exponents $t_i$ are non-negative \textit{real numbers} and the coefficients $a_i$ belong to $\Q$. The multiplication on $\Q[X^{\R_+}]$ is given by a $\Q$-linear extension of $X^s\cdot X^t=X^{s+t}$. This ring admits a natural $\R_+$-grading 
$$
\Q[X^{\R_+}] \cong \bigoplus_{t\in \R_+} \Q X^t
$$
so that a persistent module $\V \in p\mathsf{Vec}_\Q$ can be described as a graded $\Q[X^{\R_+}]$-module. This identification can be promoted to an isomorphism of categories $$
p\mathsf{Vec}_\Q \cong \gr\Mod_{\Q[X^{\R_+}]}.
$$

\begin{definition}
    Let $s<t$ be non-negative real numbers. The \textit{interval module} $\bI_{[s,t)}$ is the persistence module with components given by:
    $$
    \bI_{[s,t)}(i) = \begin{cases} \Q & \text{ if } s\leq i < t \\ 0 & \text{ else} \end{cases}
    $$
    with structure maps $\bI_{[s,t)}(i\leq j)$ given by identities when $s\leq i \leq j < t$, and zero otherwise.
\end{definition}

Direct sums of persistence modules are constructed degree-wise on both objects and morphisms. 
Interval modules are the atomic units into which all (tame) persistence modules decompose, as stated below.

\begin{theorem}\cite[Thm. 1]{Webb_1985} \label{interval-decomposition}
    If $\V$ is a tame persistence module, then there exists a set of intervals $[s_\gamma, t_\gamma) \subseteq [0,\infty)$, $\gamma\in \Gamma,$ such that $\V$ splits as a direct sum of interval modules
    $$\V \cong \bigoplus_{\gamma \in \Gamma} \bI_{[s_\gamma,t_\gamma)}.$$
    Moreover, this decomposition is unique up to permutation of summands.
\end{theorem}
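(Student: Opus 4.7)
The plan is to use tameness to reduce the claim to the classical decomposition of representations of a finite linear quiver over $\Q$, and then to transport the resulting decomposition back to $p\mathsf{Vec}_\Q$ along the left Kan extension that produced $\V$.

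First, I would fix a discretization $t_0 < t_1 < \cdots < t_n$ of $\V$, so that, by the description recalled above, $\V \cong \Lan_t V$ for some finite diagram $V \colon \{t_0 < \cdots < t_n\} \to \mathsf{Vec}_\Q$. Being a left adjoint (to precomposition with $t$), the functor $\Lan_t$ preserves arbitrary coproducts. A direct computation from the pointwise formula for the Kan extension shows that if $J_{[i,j)}$ denotes the discrete interval representation that takes the value $\Q$ on $\{t_i, \ldots, t_{j-1}\}$ with identity structure maps and zero elsewhere, then $\Lan_t J_{[i,j)} \cong \bI_{[t_i, t_j)}$, with the convention $t_{n+1} = \infty$. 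Hence any direct-sum decomposition of $V$ into discrete interval representations immediately yields the desired decomposition of $\V$ into persistent interval modules $\bI_{[s_\gamma, t_\gamma)}$.

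Second, I would establish the decomposition of the finite diagram $V_0 \xrightarrow{f_0} V_1 \to \cdots \xrightarrow{f_{n-1}} V_n$ by induction on $n$. The base case $n = 0$ follows from the splitting of any $\Q$-vector space into one-dimensional subspaces, each giving a copy of $J_{[0,1)}$. For the inductive step, I would first split off the summand $(\ker f_0 \to 0 \to \cdots \to 0)$ using a linear complement to $\ker f_0$ in $V_0$, reducing to the case where $f_0$ is injective; a symmetric choice of a complement to $f_{n-1}(V_{n-1})$ in $V_n$ splits off the summand $(0 \to \cdots \to 0 \to C_n)$. Iterating inward and adjusting the splittings of each $V_i$ to be compatible with the filtration of $V_i$ by images of the preceding maps yields the standard Gabriel-type decomposition theorem for representations of the linearly oriented $A$-type quiver.

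Finally, for uniqueness I would invoke Azumaya's version of the Krull--Schmidt theorem: the endomorphism ring of each interval module $\bI_{[s,t)}$ in $p\mathsf{Vec}_\Q$ is $\Q$, which is local, so any two decompositions of $\V$ into interval modules agree up to permutation of summands. The main anticipated obstacle is the inductive step in step two when the components $V_i$ are infinite-dimensional: picking linear complements requires Zorn's lemma, and coordinating these choices to produce an honest direct-sum decomposition into the $J_{[i,j)}$, rather than a mere decomposition into indecomposables, takes some bookkeeping to get right.
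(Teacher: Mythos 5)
Your proposal is correct in outline, but it takes a different route from the paper: the paper does not prove this statement at all, citing instead Webb's theorem on $\Z$-graded, bounded-below $\Q[x]$-modules and (implicitly) translating it to $\R_+$-indexed tame modules by Kan extension, whereas you give a self-contained argument special to the tame case. Your reduction is sound: after enlarging the discretization to contain $0$ (needed so that $\V$ really is $\Lan_t$ of its restriction rather than merely constant on the initial segment $[0,t_0)$ -- a point the paper's own description also glosses over), $\Lan_t$ preserves direct sums and sends the discrete interval representations $J_{[i,j)}$ to $\bI_{[t_i,t_j)}$ with $t_{n+1}=\infty$, so the statement does reduce to decomposing an arbitrary (possibly infinite-dimensional) representation of the linearly oriented $A_{n+1}$-quiver, and the Krull--Remak--Schmidt--Azumaya theorem gives uniqueness since $\mathrm{End}(\bI_{[s,t)})\cong\Q$ is local. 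What your route buys is independence from Webb and from any bounded-below hypothesis, exploiting that tameness collapses everything to a finite chain; what it costs is that the only nontrivial step -- the decomposition over the finite chain with no finiteness assumption on dimensions -- is exactly where your argument is sketchiest. The naive ``split off kernels and cokernels and iterate inward'' step requires, at each vertex, a decomposition simultaneously adapted to the chain of images from the left and the chain of kernels to the right, and making this precise is the real content; alternatively you could close that gap cleanly by citing the representation-finiteness of the $A_{n+1}$ path algebra together with the Ringel--Tachikawa theorem (every module over an algebra of finite representation type is a direct sum of finite-dimensional indecomposables), after which Azumaya finishes the argument exactly as you say.
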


\begin{remark} The half-open intervals are well-suited to the study of tame objects, as $\bI_{[s,t)}$ is the left Kan extension of the sequence $\Q \to 0$ over the discretization $s < t$. The intervals $\bI_{[s,t]}, \bI_{(s,t]}$ and $\bI_{(s,t)}$ are, however, \textit{not} tame for $s < t$. If they were, they would permit a half-open interval decomposition by \ref{interval-decomposition}, which is impossible via a dimension argument. 
\end{remark}

\begin{comment} Note that there are many different assumptions and interval decomposition theorems used throughout the TDA literature. In this version, the indexing set $\Gamma$ can be arbitrarily large, and the theorem still holds. The original version in \cite[Thm.1]{Webb_1985} is stated in terms of $\Z$-graded $\mathbf{k}[x]$-modules which are bounded below. One translates the theorem into a statement about $[0,\infty)$ persistence modules via left Kan extension along an appropriate embedding map 
$$
\Z_{[-i,\infty)} \xrightarrow{+i} \N \to [0,\infty).
$$
\end{comment}
\begin{comment}
\begin{notation} \label{gamma_notation}
    For each interval $\bI_{[s_\gamma, t_\gamma)}$, we will denote the generator at time $s_\gamma \leq p < t_\gamma$ by the same symbol $\gamma_p$ as the index. This means that at time $p$ we have $$\V(p) = \bigoplus_{\gamma \in \Gamma} \bI_{[s_\gamma,t_\gamma)}(p) = \Q\{ \gamma_p \mid s_\gamma \leq p < t\}$$ and that
    $$\V(p \leq q)(\gamma_p) = \begin{cases} \gamma_q & q < t_\gamma\\ 0 & q \geq t_\gamma \end{cases}$$
\end{notation}
\end{comment}

\subsection{Persistent cochain complexes} 
A \textit{persistent cochain complex} is a persistent object in the category $\Ch_\Q$ of complexes. A persistent complex can equivalently be described as a complex of persistence modules. More precisely, there is an equivalence of categories 
$$
p\Ch_\Q \simeq \Ch(p\mathsf{Vec}_\Q) \simeq \Ch(\mathsf{grMod}_{\Q[X^{\R_+}]}),
$$
from which it follows that $p\Ch_\Q$ is an abelian category. As a category of presheaves valued in an abelian category, $p\Ch_\Q$ is itself an abelian category whose limits and colimits are computed pointwise. It is moreover a locally finitely presentable category, whence the small object argument \cite[Thn. 2.1.14]{Hovey2007} can be applied to \textit{any} set of maps.
\begin{notation} Let $\X \in p\Ch_\Q$ be a persistent cochain complex and $k \in \N$.
\begin{itemize}
    \item $\X^k, Z\X^k \in p\sVec$ are the persistence modules of $k$-cochains and $k$-cocycles respectively.
    \item $H^k \X \in p\sVec$ is the persistent $k$-th cohomology $\X$.
    \item If $x_s \in \X^k(s)$, then $x_t := \X^k(s \leq t)x_s$ for $s <t$.
\end{itemize}
\end{notation} 

\subsubsection{Interval spheres and disks} In this section we introduce interval spheres and disks, as first defined in \cite{giunti_2021}. These later serve as the cells in the model structure we define over $p\Ch_\Q$.

\begin{definition}[Interval spheres]\label{spheres} Let $0\leq s \leq t $ be non-negative real numbers and $k\in \N$.
\begin{enumerate}
    \item The \textit{interval sphere} $\Sp_{[s,t)}^k \in p\Ch_\Q$ is the left Kan-extension of the functor
    $$\{ s < t \} \to \Ch_\Q; \hspace{5em} s < t \mapsto S^k \hookrightarrow D^k,$$ along the inclusion of the subposet $\{ s < t\}$ into $\R_+$.
    \item The \textit{persistent disk} $\Di_s^k \in p\Ch_\Q$ is the left Kan-extension of the functor
    $$\{ s \} \to \Ch_\Q; \hspace{7em} s  \mapsto D^k,$$
    along the inclusion of  $\{ s \}$ into $\R_+$.
     \end{enumerate}

\end{definition} 

Note that $t$ may be infinite. There is a natural inclusion $\Sp_{[s,t)}^k \hookrightarrow \Di_s^k$ of persistent cochain complexes depicted in the following schematic
% https://q.uiver.app/#q=WzAsMjAsWzEsMywicyJdLFswLDIsImstMSJdLFswLDAsImsiXSxbMiwyLCJcXFEiXSxbMiwzLCJ0Il0sWzIsMCwiXFxRIl0sWzEsMCwiXFxRIl0sWzMsMCwiXFxRIl0sWzMsMiwiXFxRIl0sWzcsMywicyJdLFs3LDIsIlxcUSJdLFs3LDAsIlxcUSJdLFs4LDIsIlxcUSJdLFs4LDAsIlxcUSJdLFs1LDFdLFs2LDFdLFs0LDIsIlxcY2RvdHMiXSxbNCwwLCJcXGNkb3RzIl0sWzksMCwiXFxjZG90cyJdLFs5LDIsIlxcY2RvdHMiXSxbMyw1LCI9IiwyXSxbNiw1LCI9Il0sWzUsNywiPSJdLFszLDgsIj0iXSxbMTAsMTIsIj0iXSxbMTAsMTEsIj0iLDJdLFsxMSwxMywiPSJdLFsxNCwxNSwiIiwyLHsic3R5bGUiOnsidGFpbCI6eyJuYW1lIjoiaG9vayIsInNpZGUiOiJ0b3AifX19XSxbOCwxNl0sWzcsMTddLFs4LDcsIj0iLDJdLFsxMywxOF0sWzEyLDE5XSxbMTIsMTMsIj0iLDJdXQ==
\[\begin{tikzcd}[row sep = tiny]
	k & \Q & \Q & \Q & \cdots &&& \Q & \Q & \cdots \\
	&&&&& {} & {} \\
	{k-1} && \Q & \Q & \cdots &&& \Q & \Q & \cdots \\
	& s & t &&&&& s
	\arrow["{=}", from=1-2, to=1-3]
	\arrow["{=}", from=1-3, to=1-4]
	\arrow[from=1-4, to=1-5]
	\arrow["{=}", from=1-8, to=1-9]
	\arrow[from=1-9, to=1-10]
	\arrow[hook, from=2-6, to=2-7]
	\arrow["{=}"', from=3-3, to=1-3]
	\arrow["{=}", from=3-3, to=3-4]
	\arrow["{=}"', from=3-4, to=1-4]
	\arrow[from=3-4, to=3-5]
	\arrow["{=}"', from=3-8, to=1-8]
	\arrow["{=}", from=3-8, to=3-9]
	\arrow["{=}"', from=3-9, to=1-9]
	\arrow[from=3-9, to=3-10]
\end{tikzcd}\] where the vertical axis is degree and the horizontal is $\R_+$. As observed in \cite{giunti_2021}, maps out of persistent spheres and disks are particularly tractable given the natural isomorphisms of vector spaces $$
\Hom(\Di_s^k, \X)\cong \X^{k-1}(s) \:\:\:\:\quad\text{and}\quad\:\:\:\: \Hom(\Sp_{[s,t)}^k, \X) \cong Z\X^k(s)\times_{Z\X^k(t)}\X^{k-1}(t),
$$
valid for any persistent chain complex $\X$ and for each $k\geq 1$. This observation motivates the use of elements in the codomain to denote morphisms with source a persistent sphere or disk, as follows:
% https://q.uiver.app/#q=WzAsNCxbMCwwLCJcXFNwXntrKzF9X3tbcyx0KX0iXSxbMiwwLCJcXGJBIl0sWzQsMCwiXFxEaV57aysxfV9zIl0sWzYsMCwiXFxiQiJdLFswLDEsIih4X3MseV90KSJdLFsyLDMsInpfcyJdXQ==
\[\begin{tikzcd}
	{\Sp^{k+1}_{[s,t)}} && \bA && {\Di^{k+1}_s} && \bB
	\arrow["{(x_s,y_t)}", from=1-1, to=1-3]
	\arrow["{z_s}", from=1-5, to=1-7]
\end{tikzcd}\] where $dy_t = x_t \in \bA^{k+1}(t)$ and $z_s \in \bB^k(s)$.

\subsubsection{Interval spheres over $p\sVec$} More generally, interval spheres can be constructed over interval-decomposable persistence modules.
\begin{definition}
    Let $\V$ be a persistence module with interval decomposition
    $
    \V = \bigoplus_{\alpha} \bI_{[s_\alpha,t_\alpha)}.
    $
    The associated persistent spheres $\Sp^k(\V)$ and disks $\Di^k(\V)$ are defined for every $k\geq 0$ by 
    $$
    \Sp^k(\V)=\bigoplus_{\alpha} \Sp^k_{[s_\alpha, t_\alpha)}
    \:\:\:\:\:\:\:\:\:\:\:\:
    \Di^k(\V)=\bigoplus_{\alpha} \Di^k_{[s_\alpha, t_\alpha)}.
    $$
    This definition can be extended to \textit{graded} persistence modules $\V=\bigoplus_{k\geq 0} \V_k$, whenever each $\V_k$ is interval-decomposable, by setting 
    $$
    \Sp(\V)=\bigoplus_{k\geq 0} \Sp^k(\V_k)
    \:\:\:\:\:\:\:\:\:\:\:\:
    \Di(\V)=\bigoplus_{k\geq 0} \Di^k(\V_k).
    $$
    When $\V$ is concentrated in degree $k$, we find $\Sp(\V)=\Sp^k(\V)$ and similarly for disks. As before, there is a natural inclusion $\Sp(\V)\subseteq \Di(\V)$ induced by those of \Cref{spheres}.
\end{definition}

\begin{remark}
    It is important to emphasize that the notation $\Sp^k(\V)$ makes sense only when the persistence module $\V$ admits an interval decomposition. 
\end{remark}

\section{The interval-sphere model structure on $p\sCh$} 

In this section, we prove the existence of a new combinatorial, compactly generated model structure on $p\Ch_\Q$. Based on this framework, we can then interpret the interval surgery of\cite{hess2024celldecompositionspersistentminimal} as cell attachment in a transferred model structure on $p\CDGA_\Q$ in the next section, and the persistent minimal model of\cite{hess2024celldecompositionspersistentminimal} as a bona fide cell complex.

\subsection{The interval-sphere model structure} 
\label{modelstructurepchain}

Recall from \Cref{spheres} the definition of persistent $k$-spheres $\Sp^k_{[s,t)}$ and $k$-disks $\Di^k_{s}$. There are obvious inclusion maps 
$$
\Sp^k_{[s,t)}\to \Di^{k}_s 
\:\:\:\:\:\:\:\: \text{and} \:\:\:\:\:\:\:\ 
\Di^k_t \to \Di^{k}_s
$$
for any $s<t $ and $k\in \N$. When $t=\infty$, these inclusions become $$
\Sp^k_{[s,\infty)}\to \Di_s^k \:\:\:\:\:\:\:\: \text{and }\:\:\: 0 \to \Di_s^k.
$$
These inclusions play the role of generating cofibrations and trivial cofibrations for our model structure on $p\Ch_\Q$. It proves useful to treat the cases $t< \infty$ and $t=\infty$ separately. Define the following sets of maps in $p\Ch_\Q$ :
$$
\begin{aligned}
&\I_0 = \{ \:\: \Sp^k_{[s,t)}\to \Di^{k}_s \:\: | \: k\in \N, \:\: 0 \leq s<t<\infty  \:\} \\
&\I_\infty = \{ \:\: \Sp^k_{[s,\infty)}\to \Di^{k}_s \:\: | \: k\in \N, \:\: 0\leq s<\infty  \:\} \\
&\J_0 = \{ \:\: \Di^k_t \to \Di^{k}_s \:\: | \: k\in \N , \:\: 0\leq s< t < \infty \:\} \\
&\J_\infty = \{ \:\: 0 \to \Di^{k}_s \:\: | \: k\in \N , \:\: 0\leq s<  \infty \:\}.
\end{aligned}
$$
We let $\I=\I_0 \cup \I_\infty$ and $\J=\J_0 \cup \J_\infty$.  Note that there are evident inclusions $\Di_t^k \hookrightarrow \Sp_{[s,t)}^k$.  We adopt the convention that $\Di_s^0=0$.

Define $\W$ to be the class of maps $f:\X \to  \Y$ of which each component $f(i):\X(i)\to \Y(i)$ is a quasi-isomorphism.

Following classical terminology, we write $\Inj(\I)=\I^\pitchfork$ for the class of maps in $p\Ch_\Q$ that have the right lifting property with respect to maps of $\I$, and call them $\I$-injectives. 
Also, we write $\Cof(\I)=\:^\pitchfork\Inj(\I)= \:^\pitchfork(\I^\pitchfork)$ for the class of maps that have the left lifting property with respect to $\I$-injectives. We call those maps $\I$-cofibrations.

\begin{proposition}
The $\J_0$-injective maps are exactly those $f:\X \to \Y$ for which the induced maps $\X(s) \to \X(t) \times_{\Y(t)} \Y(s)$ are epimorphisms for every $0\leq s\leq t < \infty$.
\end{proposition}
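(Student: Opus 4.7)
The argument is a routine translation of the right lifting property against each generator of $\J_0$ into an elementary surjectivity condition, enabled by the Hom-computation for persistent disks recalled in the excerpt. I would first invoke the isomorphism $\Hom_{p\Ch_\Q}(\Di^k_s, \X) \cong \X^{k-1}(s)$, valid for $k \geq 1$ and coming from the universal property of left Kan extension together with the identification $\Hom_{\Ch_\Q}(D^k, C) \cong C^{k-1}$. Under this bijection, the map of Hom sets induced by $\Di^k_t \hookrightarrow \Di^k_s$ becomes the transition map $\X^{k-1}(s \leq t)$.

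Fixing $k \geq 1$ and $0 \leq s < t < \infty$, I would unpack a commutative square
\[
\begin{tikzcd}
\Di^k_t \ar[r, "y_t"] \ar[d, hook] & \X \ar[d, "f"] \\
\Di^k_s \ar[r, "x_s"'] & \Y
\end{tikzcd}
\]
into the data of $y_t \in \X^{k-1}(t)$ and $x_s \in \Y^{k-1}(s)$ satisfying $f(t)(y_t) = \Y^{k-1}(s \leq t)(x_s)$, i.e., an element $(y_t, x_s)$ of the pullback $\X^{k-1}(t) \times_{\Y^{k-1}(t)} \Y^{k-1}(s)$; a diagonal filler then corresponds to an element $z_s \in \X^{k-1}(s)$ whose image in this pullback is $(y_t, x_s)$. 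Consequently, $f$ has the right lifting property against $\Di^k_t \hookrightarrow \Di^k_s$ for every such pair $(s, t)$ and every choice of data if and only if the canonical map $\X^{k-1}(s) \to \X^{k-1}(t) \times_{\Y^{k-1}(t)} \Y^{k-1}(s)$ is surjective.

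Letting $n = k-1$ range over all of $\N$ (the case $k = 0$ being vacuous since $\Di^0_s = 0$), and using that pullbacks in $\Ch_\Q$ are computed degree-wise while epimorphisms in $\Ch_\Q$ are precisely the degree-wise surjections, the resulting family of surjectivity conditions is exactly the assertion that $\X(s) \to \X(t) \times_{\Y(t)} \Y(s)$ is an epimorphism in $\Ch_\Q$ for every $0 \leq s < t < \infty$; the case $s = t$ is trivial, since the comparison map is then the identity. No genuine obstacle arises: once the Hom identification is in place, the rest is bookkeeping. The only subtlety is to observe that pullbacks in $p\Ch_\Q$ agree pointwise with those in $\Ch_\Q$, which is immediate from limits in $p\Ch_\Q$ being computed pointwise.
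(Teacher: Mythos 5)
Your proposal is correct and follows essentially the same route as the paper: both translate the lifting problem against each generator $\Di^k_t \to \Di^k_s$ into the datum of an element of the fiber product $\X^{k-1}(t) \times_{\Y^{k-1}(t)} \Y^{k-1}(s)$, identify a diagonal filler with a preimage under the comparison map from $\X^{k-1}(s)$, and then let the degree vary to obtain the epimorphism condition. The only cosmetic difference is that you invoke the Hom-isomorphism $\Hom(\Di^k_s,\X)\cong\X^{k-1}(s)$ explicitly, whereas the paper unpacks the squares element-wise; the content is identical.
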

\begin{proof}
If $s\leq t$, a commutative square of the form
$$
\begin{tikzcd}
    \Di_t^{k+1} \ar[d] \ar[r, "x_t"] & \X \ar[d, "f"] \\
    \Di_s^{k+1} \ar[r, "y_s"'] & \Y.
\end{tikzcd}
$$
is the data of a pair $(x_t, y_s)\in \X^{k}(t) \times \Y^{k}(s)$ such that $f(x_t)=y_t$. The set of all such squares is hence in bijection with the fiber product $\X^{k}(t)\times_{\Y^{k}(t)} \Y^{k}(s)$. Such a lifting problem admits a solution if and only if there exists $x_s\in \X^{k}(s)$ with $\X(s\leq t)(x_s)=x_t$. This means exactly that $(x_t, y_s)$ lies in the image of the induced map $\X^{k}(s) \to \X^{k}(t) \times_{\Y^{k}(t)} \Y^{k}(s)$. Hence all such lifting problems have a solution if and only if this map is surjective. Varying $k\geq 0$, we conclude that $\X(s) \to \X(t) \times_{\Y(t)} \Y(s)$ is an epimorphism.
\end{proof}

\begin{proposition}    
    The $\J_\infty$-injectives are exactly the epimorphisms.
\end{proposition}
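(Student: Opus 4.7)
The plan is to unpack the lifting condition using the natural identification $\Hom(\Di_s^k, \X)\cong \X^{k-1}(s)$ recalled just before \Cref{modelstructurepchain}, and then compare the resulting pointwise surjectivity statement with the characterization of epimorphisms in $p\Ch_\Q$.

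First I would note that, by convention, $\Di_s^0=0$, so the generator $0\to\Di_s^0$ imposes no condition. For $k\geq 1$ and $s\geq 0$, a commutative square
$$
\begin{tikzcd}
0 \ar[d] \ar[r] & \X \ar[d, "f"] \\
\Di_s^{k} \ar[r, "y_s"'] & \Y
\end{tikzcd}
$$
is, under the identification $\Hom(\Di_s^k,-)\cong(-)^{k-1}(s)$, the same data as an element $y\in \Y^{k-1}(s)$. A lift $\Di_s^k\to\X$ corresponds to an element $x\in \X^{k-1}(s)$ with $f^{k-1}(s)(x)=y$. Hence $f$ is $\J_\infty$-injective if and only if $f^{k-1}(s)\colon \X^{k-1}(s)\to\Y^{k-1}(s)$ is surjective for every $k\geq 1$ and every $s\geq 0$.

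Letting $n=k-1$ run over $\N$, this condition is exactly that $f$ is degreewise surjective at every $s\in\R_+$. I would finish by recalling that colimits (and hence epimorphisms) in $p\Ch_\Q$ are computed pointwise in $\Ch_\Q$, and that epimorphisms in $\Ch_\Q$ are precisely the degreewise surjections of underlying $\Q$-vector spaces. Consequently, the $\J_\infty$-injectives coincide with the epimorphisms of $p\Ch_\Q$.

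There is no real obstacle here: the argument is a direct translation via the hom-adjunction for persistent disks, with the only mild bookkeeping being the degree shift $k\mapsto k-1$ and the convention $\Di_s^0=0$, which together ensure that every non-negative degree is covered exactly once.
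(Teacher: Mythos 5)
Your proposal is correct and follows essentially the same route as the paper: both unpack the lifting problem against $0\to\Di_s^k$ via the identification $\Hom(\Di_s^k,\X)\cong\X^{k-1}(s)$ (the paper indexes the disks as $\Di_s^{k+1}$, which is just your re-indexing), concluding that $\J_\infty$-injectivity is exactly pointwise, degreewise surjectivity, i.e., being an epimorphism. Your explicit remarks about the convention $\Di_s^0=0$ and about epimorphisms in $p\Ch_\Q$ being computed pointwise are harmless elaborations the paper leaves implicit.
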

\begin{proof}
    Given $s\geq 0$ and $k\in \N$, a commuting square 
    $$
    \begin{tikzcd}
    0 \ar[d] \ar[r] & \X \ar[d, "f"] \\
    \Di_s^{k+1} \ar[r, "y_s"] & \Y
    \end{tikzcd}
    $$
    admits a lift if and only if $y_s$ lies in the image of $f$. Any such square admits a lift if and only if $f:\X^k(s)\to \Y^k(s)$ is surjective. Varying $k$ and $s$, we find that $\J_\infty$-injectives are exactly the epimorphisms.
\end{proof}

%Note that the data of a map $\Sp_{[s,t]}^k \to \X$ amounts to the choice of a pair $(x_s,y_t)\in \X^k(s) \times \X^{k-1}(t)$ where $dy_t=x_t$. Similarly, a map $\Di_s^k \to \Y$ is the same thing as an element $u_s \in \Y^{k-1}(s)$. Commutativity of the above diagram translates as $f(x_s)=du_s$ and $f(y_t)=u_t$. Solving the lifting problem amounts to finding $y_s \in \X_s^{k-1}(s)$ such that $\X(s\leq t)(y_s)=y_t$ and $f(y_s)=u_s$. 

\begin{proposition}\label{cube}
The $\I_0$-injectives are those $f:\X \to \Y$ for which the induced maps of the cubes  
$$
\begin{tikzcd}[row sep=tiny, column sep=tiny]
& \Y^{k-1}(s)  \arrow[rr] \arrow[dd] & & Z\Y^k(s)  \arrow[dd] \\
\X^{k-1}(s) \arrow[ur]\arrow[rr, crossing over] \arrow[dd] & & Z\X^k(s) \arrow[ur] \\
& \Y^{k-1}(t)  \arrow[rr] & & Z\Y^{k}(t)  \\
\X^{k-1}(t) \arrow[ur]\arrow[rr] & & Z\X^k(t) \arrow[ur]\arrow[from=uu, crossing over]\\
\end{tikzcd}
$$
are epimorphisms for every $s\leq t$ and every $k\geq 1$.
\end{proposition}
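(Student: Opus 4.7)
The plan is to convert the lifting problem against each generator $\Sp_{[s,t)}^k \hookrightarrow \Di_s^k \in \I_0$ into an elementwise condition and then recognise that condition as surjectivity of the natural map from $\X^{k-1}(s)$ into the limit of the cube with the $\X^{k-1}(s)$-corner deleted. The main tool is the pair of natural isomorphisms recalled earlier in the excerpt,
\[
  \Hom(\Di_s^k, \Y) \cong \Y^{k-1}(s), \qquad \Hom(\Sp_{[s,t)}^k, \X) \cong Z\X^k(s)\times_{Z\X^k(t)}\X^{k-1}(t).
\]
An elementary computation identifies the image of the inclusion $\Sp_{[s,t)}^k \hookrightarrow \Di_s^k$ under these identifications: a morphism $z_s \colon \Di_s^k \to \Y$ precomposed with this inclusion corresponds to the pair $(dz_s, z_t)$, since the degree-$k$ generator of $\Sp_{[s,t)}^k(s)$ is sent to the degree-$k$ generator of $\Di_s^k(s)$, which equals $d$ of the degree-$(k-1)$ generator mapped to $z_s$, while the degree-$(k-1)$ generator of $\Sp_{[s,t)}^k(t)$ maps to $z_t$. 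Consequently a commutative square
\[
\begin{tikzcd}
\Sp_{[s,t)}^k \ar[r, "{(x_s, y_t)}"] \ar[d, hook] & \X \ar[d, "f"] \\
\Di_s^k \ar[r, "z_s"'] & \Y
\end{tikzcd}
\]
is the data of a triple $(x_s, y_t, z_s) \in Z\X^k(s) \times \X^{k-1}(t) \times \Y^{k-1}(s)$ satisfying the three compatibility relations $dy_t = x_t$, $dz_s = f(x_s)$, and $f(y_t) = z_t$; a lift $\Di_s^k \to \X$ corresponds to some $w_s \in \X^{k-1}(s)$ with $dw_s = x_s$, $w_t = y_t$, and $f(w_s) = z_s$.

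Next I would identify this set of triples with the limit $L$ of the seven-term diagram obtained by deleting $\X^{k-1}(s)$ from the cube in the statement. The remaining four vertices $Z\X^k(t), Z\Y^k(s), \Y^{k-1}(t), Z\Y^k(t)$ are determined by the triple $(x_s, y_t, z_s)$ via the structure maps, $f$, and $d$, and the three independent commutativity relations in the punctured cube reduce exactly to the three equations above, with the constraint at $Z\Y^k(t)$ holding automatically. Under this identification the map of the statement becomes $\X^{k-1}(s) \to L$, $w_s \mapsto (dw_s, w_t, f(w_s))$, so the universal solvability of the lifting problem is precisely the statement that this map is a surjection, equivalently an epimorphism in $\sVec_\Q$. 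Quantifying over $k \geq 1$ and $s < t$ yields the equivalence.

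The main obstacle is organisational rather than mathematical: one must carefully enumerate the edges of the cube and verify that the four upper vertices are forced by the triple, so that the seven-term limit collapses to the three-variable description with three independent equations matching the lifting conditions read off from the Hom-isomorphisms. Once this bookkeeping is in place, the proposition is an elementary translation between lifting problems and epimorphisms of vector spaces.
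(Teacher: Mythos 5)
Your proposal is correct and follows essentially the same route as the paper: translate a commuting square against $\Sp_{[s,t)}^{k}\hookrightarrow\Di_s^{k}$ into a triple $(x_s,u_t,y_s)$ subject to $du_t=x_t$, $f(x_s)=dy_s$, $f(u_t)=y_t$, identify that set with the iterated fiber product (the limit of the punctured cube), and observe that solvability of all such lifting problems is precisely surjectivity of the induced map $u_s\mapsto(du_s,u_t,f(u_s))$. The paper simply presents the limit directly as the iterated fiber product rather than via your "delete the corner and collapse the seven-term limit" bookkeeping, but the content is identical.
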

\begin{remark}
The horizontal arrows in the cube are differentials. The vertical arrows are structure maps induced by the relation $s\leq t$ while the diagonal maps are components of $f$. The induced map of the cube above takes the form 
\begin{equation} \label{gapmap}
\X^{k-1}(s) \to \big(Z\X^k(s)\times_{Z\X^k(t)} \X^{k-1}(t)\big) \times_{\big(Z\Y^k(s)\times_{Z\Y^k(t)} \Y^{k-1}(t)\big)} \Y^{k-1}(s)
\end{equation}
whose components are respectively given by the induced map of the square $$
\begin{tikzcd}
    \X^{k-1}(s) \ar[d] \ar[r, "d"] & Z\X^{k}(s) \ar[d] \\
    \X^{k-1}(t) \ar[r, "d"'] & Z\X^k(t)
\end{tikzcd}
$$
and $f^{k-1}:\X^{k-1}(s)\to \Y^{k-1}(s)$. In elements, this induced map as described in \Cref{gapmap} sends an element $u_s\in \X^{k-1}(s)$ to the triple $(du_s, u_t, f(u_s))$. 
\end{remark}
\begin{proof}[Proof of \Cref{cube}]
    The set of commutative squares of the form 
    $$
    \begin{tikzcd}
        \Sp_{[s,t)}^{k} \ar[d] \ar[r, "{(x_s,\: u_t)}"] & \X \ar[d, "f"] \\
        \Di_s^k \ar[r, "y_s"'] & \Y.
    \end{tikzcd}
    $$
    is in bijection with the set of triples $(x_s, u_t, y_s)\in Z\X^k\times \X^{k-1}\times \Y^{k-1}$ such that $ du_t=x_t$, \linebreak $f(x_s)=dy_s$ and $f(u_t)=y_t$. This set is in one-to-one correspondance with the iterated fiber product $$
    \big(Z\X^k(s)\times_{Z\X^k(t)} \X^{k-1}(t)\big) \times_{\big(Z\Y^k(s)\times_{Z\Y^k(t)} \Y^{k-1}(t)\big)} \Y^{k-1}(s).
    $$
    Such a lifting problem admits a solution if and only if there exists $u_s \in \X^{k-1}$ such that \linebreak $\X(s\leq t)(u_s)=u_t$, $f(u_s)=y_s$ and $du_s=x_s$. This precisely means that the induced map (\ref{gapmap}) sends $u_s$ to the triple $(x_s, u_t, y_s)$.
\end{proof}

\begin{proposition}\label{qiso}
The $\I_\infty$-injectives are exactly the pointwise surjective quasi-isomorphisms.
\end{proposition}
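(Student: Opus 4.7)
The plan is to translate each $\I_\infty$-lifting problem into an elementwise condition at the index $s$, using that both $\Sp^k_{[s,\infty)}$ and $\Di^k_s$ are left Kan extensions of cochain complexes concentrated at $s$, and then prove the two implications separately. For $k\geq 1$, the adjunction gives $\Hom(\Sp^k_{[s,\infty)},\X)\cong Z\X^k(s)$ and $\Hom(\Di^k_s,\X)\cong \X^{k-1}(s)$, so a commuting square against $f\colon\X\to\Y$ corresponds to a pair $(x_s,y_s)\in Z\X^k(s)\times \Y^{k-1}(s)$ with $f(x_s)=dy_s$, and a lift corresponds to $u_s\in\X^{k-1}(s)$ satisfying $du_s=x_s$ and $f(u_s)=y_s$; the full persistent problem is completely determined by its value at $s$. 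The case $k=0$ is special because $\Di^0_s=0$: there $\I_\infty$-injectivity says exactly that $f\colon Z\X^0(s)\to Z\Y^0(s)$ is injective for every $s$.

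For the direction $(\Leftarrow)$, assume $f$ is a pointwise surjective quasi-isomorphism. Then each $f(s)$ is a trivial fibration in $\Ch_\Q$, so the classical lifting property against the cofibration $S^k\hookrightarrow D^k$ produces the required $u_s$, and functoriality from $s$ extends it uniquely to a persistent map $\Di^k_s\to\X$. The $k=0$ condition is immediate, since the quasi-isomorphism assumption forces $Z\X^0(s)\to Z\Y^0(s)=H^0\Y(s)$ to be an isomorphism, hence injective.

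For the direction $(\Rightarrow)$, assume $f$ is $\I_\infty$-injective; I would prove pointwise surjectivity first, then the quasi-isomorphism condition. Feeding $x_s=0$ into a lifting problem with $k\geq 1$ shows that every $y_s\in Z\Y^{k-1}(s)$ lifts to a cocycle in $Z\X^{k-1}(s)$, so $f$ is pointwise surjective on $Z^n$ for every $n\geq 0$. For a general $y_s\in\Y^{k-1}(s)$, one chooses a preimage $x_s\in Z\X^k(s)$ of $dy_s$ using the cocycle surjectivity just obtained; the lift then yields $u_s\in\X^{k-1}(s)$ with $f(u_s)=y_s$, giving pointwise surjectivity in every degree. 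Surjectivity of $H^kf(s)$ is immediate from cocycle surjectivity, while injectivity for $k\geq 1$ is a direct rephrasing of the lifting condition (if $f(x_s)=dy_s$, the lift produces $u_s$ with $du_s=x_s$, so $[x_s]=0$), and the $k=0$ injectivity of $H^0f(s)$ is exactly the $k=0$ part of $\I_\infty$-injectivity.

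The main subtlety I expect is the bookkeeping around $k=0$: because $\Di^0_s=0$, the $k=0$ generator only constrains $0$-cocycles, so one has to notice that degree-$0$ surjectivity is delivered by the $k=1$ generator, while injectivity of $H^0f$ is precisely what the $k=0$ generator delivers. Once this is sorted, the proof is a pointwise reduction to the projective model structure on $\Ch_\Q$.
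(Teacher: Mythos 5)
Your proposal is correct, and its forward direction coincides with the paper's: you translate a lifting problem against $\Sp^k_{[s,\infty)}\to\Di^k_s$ into a pair $(x_s,y_s)$ with $f(x_s)=dy_s$, obtain surjectivity on cocycles by feeding in $x_s=0$, deduce surjectivity in every degree by first lifting $dy_s$ to a cocycle, and read off injectivity of $H^kf$ from the lift — exactly the paper's argument. Where you diverge is the converse. The paper proves it by hand inside $p\Ch_\Q$: it forms the kernel $\K$ (acyclic by the long exact sequence), chooses a preimage $u_s$ of $y_s$, and corrects it by a $z_s\in\K$ with $dz_s=du_s-x_s$. You instead observe that $\Sp^k_{[s,\infty)}\to\Di^k_s$ is the left Kan extension along $\{s\}\hookrightarrow\R_+$ of the generating cofibration $S^k\hookrightarrow D^k$, so by adjunction the persistent lifting problem is literally the pointwise lifting problem for $f(s)$ in $\Ch_\Q$, which is solved because a pointwise surjective quasi-isomorphism is a trivial fibration there. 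This is a cleaner, more structural packaging (the classical fact you cite is proved by the very acyclic-kernel correction the paper spells out), at the cost of leaning on the stated model structure on $\Ch_\Q$ rather than being self-contained. A further point in your favour: you treat the degenerate generator $\Sp^0_{[s,\infty)}\to\Di^0_s=0$ explicitly, noting that it encodes precisely injectivity of $f$ on $Z\X^0(s)=H^0\X(s)$ and that degree-$0$ surjectivity comes from the $k=1$ generator; the paper's write-up glosses over this case (its formulas involve $\Y^{k-1}$ and $\X^{k-1}$, which do not exist for $k=0$), so your bookkeeping closes a small gap rather than creating one.
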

\begin{proof}
Let $f:\X \to \Y$ be an $\I_\infty$-injective. We wish to show that $Hf:H\X(s)\to H\Y(s)$ is an isomorphism for every $s\geq0$. Let $x_s \in \X^k$ be a cocycle with $f(x_s)=dy_s$ for some $y_s\in \Y^{k-1}$. This data corresponds to a square 
\begin{equation}\label{squareinf}
\begin{tikzcd}
        \Sp_{[s,\infty)}^{k} \ar[d] \ar[r, "x_s"] & \X \ar[d, "f"] \\
        \Di_s^k\ar[ur, dashed] \ar[r, "y_s"'] & \Y
    \end{tikzcd}
\end{equation}
where the map $\Sp^k_{[s,\infty]}\to \X$ corresponds to the choice of $x_s \in \X^k$ and the map $\Di_s^k\to \Y$ to $y_s\in \Y^{k-1}$. The square commutes since $f(x_s)=dy_s$. Existence of the lift means there exists $u_s \in \X^{k-1}$ with $du_s=x_s$. This shows that $Hf$ is injective.

For surjectivity, let $y_s\in \Y^k$ be a cocycle. We obtain the following square of solid arrows
$$
\begin{tikzcd}
        \Sp_{[s,\infty)}^{k+1} \ar[d] \ar[r, "0"] & \X \ar[d, "f"] \\
        \Di_s^{k+1}\ar[ur, dashed] \ar[r, "y_s"'] & \Y
    \end{tikzcd}
$$
where the bottom map corresponds to the choice of $y_s$. Commutativity of the square follows from $dy_s=0$. Existence of the lift amounts to finding $x_s\in \X^k$ with $dx_s=0$ and $f(x_s)=y_s$. Since the choice of $y_s$ was arbitrary, this proves that $Hf$ is surjective.

Now we prove that $f$ itself is surjective. To that end, take $y_s \in \Y^k$. Since $dy_s$ is a cocycle, there exists $x_s\in Z\X^{k+1}$ such that $f(x_s)=dy_s$. The pair $(x_s, y_s)$ defines a diagram of form \Cref{squareinf}, so there exists $u_s\in \X^{k-1}$ with $f(u_s)=y_s$.

For the converse, assume $f$ is surjective and $Hf$ an isomorphism. Under these assumptions there is a short exact sequence $$
0 \to \K \to \X \overset{f}{\to} \Y \to 0
$$
with $H\K=0$. Consider a square of solid arrows of the form Diagram \ref{squareinf} above. Choose $u_s\in \X^{k-1}$ with $f(u_s)=y_s$. Then $f(du_s)=dy_s=f(x_s)$ so $du_s-x_s \in \K$ hence there exists $z_s\in \K^k$ such that $dz_s=du_s-x_s$. Letting $v_s=u_s-z_s$ we find $f(v_s)=f(u_s)$ and $z_t=0$, which means that $v_s$ defines a lift in diagram \Cref{squareinf}.
\end{proof}

%\begin{remark}\label{inftyinj}
%    The proof of \Cref{qiso} shows in fact that the pointwise surjective quasi-isomorphisms are \textit{exactly} the maps that have the right lifting property with respect to inclusions $\Sp_{[s,\infty]}^k \to \Di_s^k$ for every $k\in \N$ and $s\geq 0$. Writing $\I_\infty \subseteq \I$ for the set of all such inclusions, we find $ \W=(\I_\infty)^\pitchfork
%    $.
%\end{remark}

\begin{proposition}\label{IJinj}
The $\I$-injectives are also $\J$-injectives.
\end{proposition}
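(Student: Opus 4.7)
The plan is to treat $\J_\infty$ and $\J_0$ separately. The $\J_\infty$ half is immediate: by \Cref{qiso}, every $\I_\infty$-injective is in particular pointwise surjective, which coincides with the characterization of $\J_\infty$-injectives.

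For the $\J_0$ part, given $f\colon \X \to \Y$ in $\I^\pitchfork$, I need to show that $\X^m(s) \to \X^m(t) \times_{\Y^m(t)} \Y^m(s)$ is surjective for every $m \ge 0$ and every $0 \le s \le t < \infty$. Setting $\K = \ker f$, which is pointwise acyclic since $f$ is a pointwise quasi-isomorphism, I would first reduce to showing that the structure maps $\K^m(s) \to \K^m(t)$ are surjective. Indeed, given a compatible pair $(x_t, y_s)$ with $f(x_t) = y_t$, pointwise surjectivity of $f$ at time $s$ yields $x'_s \in \X^m(s)$ with $f(x'_s) = y_s$; the element $x_t - \X(s\le t)(x'_s)$ then lies in $\K^m(t)$, and any lift of it to $\K^m(s)$ corrects $x'_s$ into the desired $x_s$.

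The core step is extracting this surjectivity from the cube condition of \Cref{cube}. I would observe that restricting the surjective cube map for $f$ to the fiber over $v_s = 0$ forces $z_s \in Z\K^k(s)$ and $u_t \in \K^{k-1}(t)$, and also constrains any preimage to lie in $\K^{k-1}(s)$; the resulting ``cube for $\K \to 0$'',
\[
\K^{k-1}(s) \twoheadrightarrow Z\K^k(s) \times_{Z\K^k(t)} \K^{k-1}(t),
\]
is therefore surjective for every $k \ge 1$. The argument then proceeds in two moves. First, substituting $z_s = 0$ and a cocycle $u_t \in Z\K^{k-1}(t)$, the compatibility $z_t = du_t = 0$ is automatic, so the restricted cube produces $u_s \in Z\K^{k-1}(s)$ lifting $u_t$; this establishes surjectivity of $Z\K^m(s) \to Z\K^m(t)$ for every $m \ge 0$. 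Second, for a general $a_t \in \K^{k-1}(t)$, its differential $da_t \in Z\K^k(t)$ lifts by the first move to some $z_s \in Z\K^k(s)$, and the restricted cube applied to the compatible pair $(z_s, a_t)$ delivers an element $u_s \in \K^{k-1}(s)$ with $\K(s\le t)(u_s) = a_t$, as required.

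The main subtlety I anticipate is a potential circularity: lifting a general element of $\K^m$ naively demands lifting its coboundary in $\K^{m+1}$, which is itself a structure-map surjectivity statement one degree up. Isolating the cocycle case as an independent first step, where the compatibility $z_t = du_t$ collapses to the trivial relation $0 = 0$, is what breaks the loop and makes the induction on elements (rather than on degree) go through in a single pass.
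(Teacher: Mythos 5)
Your proof is correct, but it is organized quite differently from the paper's. The paper argues directly on $f$: it solves the $\J_0$-lifting problem for $(x_t,y_s)$ by two nested applications of $\I_0$-lifting, first against $\Sp^{k+2}_{[s,t)}\to\Di^{k+2}_s$ with attaching data $(0_s,dx_t)$ and $dy_s$ to produce a cocycle $z_s$ with $z_t=dx_t$, $f(z_s)=dy_s$, and then against $\Sp^{k+1}_{[s,t)}\to\Di^{k+1}_s$ with data $(z_s,x_t)$ and $y_s$ to obtain the desired lift; it never passes through the kernel and does not invoke \Cref{qiso} or \Cref{cube}. You instead peel off the $\Y$-data once and for all using pointwise surjectivity (from \Cref{qiso}), reduce to degreewise surjectivity of the structure maps of $\K=\ker f$, and observe that the cube condition of \Cref{cube} restricted to the fiber over $0$ says exactly that $\K\to 0$ satisfies the same $\I_0$-surjectivity condition; your two moves (cocycles via the cube one degree up, then general elements) are the kernel-internal counterparts of the paper's two lifting squares, and your cocycle-first step correctly breaks the apparent circularity. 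What each route buys: the paper's is shorter and self-contained, needing only the raw lifting property; yours isolates a clean, reusable intermediate statement (the kernel of an $\I$-injective has degreewise surjective structure maps) and in fact anticipates the kernel-based style the paper uses later in \Cref{Iinj_JinjW}, while also handling $\J_\infty$ explicitly, which the paper's proof leaves implicit. Two small remarks: the pointwise acyclicity of $\K$ you mention is never actually used, and the low-degree case (lifting against $\Di^{0}_t\to\Di^{0}_s$) is vacuous under the paper's convention $\Di^0_s=0$; even under a nontrivial reading of $\Di^0_s$ your reduction still goes through because $Z^0f$ is a pointwise isomorphism.
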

\begin{proof}
    Let $f:\X \to \Y$ be an $\I$-injective and consider a lifting problem 
\begin{equation}\label{liftproblem}
    \begin{tikzcd}
        \Di^{k+1}_t \ar[d] \ar[r, "x_t"] & \X \ar[d, "f"] \\
        \Di_s^{k+1} \ar[r, "y_s"'] & \Y
    \end{tikzcd}
\end{equation}
in which $k\geq 0$ and $s \leq t$, corresponding to a pair $(x_t, y_s)\in \X^k \times \Y^k$ such that $f(x_t)=y_t$. The pair $(0_s,dx_t)\in Z\X^{k+2}\times \X^{k+1}$ corresponds to a map $\Sp_{[s,t)}^{k+2} \to \X$, while $dy_t$ defines a map $\Di_t^{k+2}\to \X$. That $f(dx_t)=dy_t$ ensures commutativity of the solid square
$$
\begin{tikzcd}
        \Sp_{[s,t)}^{k+2} \ar[d] \ar[r, "{(0_s,\:dx_t)}"] & \X \ar[d, "f"] \\
        \Di_s^{k+2}\ar[ur, dashed] \ar[r, "dy_s"'] & \Y.
    \end{tikzcd}
$$
Existence of a lift guarantees that we can find $z_s \in \X^{k+1}$ with $z_t=dx_t$ and $f(z_s)=dy_s$. Since $dz_s=0$, the pair $(z_s, x_t)\in Z\X^{k+1}\times \X^k$ defines a map $\Sp_{[s,t)}^{k+1}\to \X$ fitting in the square of solid arrows
$$
    \begin{tikzcd}
        \Sp_{[s,t)}^{k+1} \ar[d] \ar[r, "{(z_s,\:x_t)}"] & \X \ar[d, "f"] \\
        \Di_s^{k+1}\ar[ur, dashed] \ar[r, "y_s"'] & \Y.
    \end{tikzcd}
$$
Any lift $x_s \in \X^k$ solves the original lifting problem because $\X(s\leq t)(x_s)=x_t$.

For the case $k=-1$, the top map in Diagram \ref{liftproblem} consists of a pair of cocycles $(x_t, y_s)\in Z\X^0 \times Z\Y^0$ with $f(x_t)=y_t$. The pair $(0, x_t)$ specifies a map $\Sp_{[s,t)}^1\to \X$ rendering the solid square on the left below commutative.
$$
    \begin{tikzcd}
        \Sp_{[s,t)}^{1} \ar[d] \ar[r, "{(0,\:x_t)}"] & \X \ar[d, "f"] \\
        \Di_s^{1}\ar[ur, dashed, "x_s"] \ar[r, "y_s"'] & \Y
    \end{tikzcd}
    \:\:\:\:\:\:\:\:\:\:\:\:\:\:\:\:\:\:\:\:\:\:\:\:\:\:\:\:\:\:\:\:\:\:\:\:
    \begin{tikzcd}
        \Di_t^{0} \ar[d] \ar[r, "x_t"] & \X \ar[d, "f"] \\
        \Di_s^{0}\ar[ur, dashed, "x_s"] \ar[r, "y_s"'] & \Y
    \end{tikzcd}
$$
The lift $x_s \in Z\X^0$ on the left solves the lifting problem on the right since $f(x_s)=y_s$ and $\X(s\leq t)(x_s)=x_t$.
\end{proof}

\begin{corollary}\label{JcofIcof}
The maps in $\J$ are both $\I$-cofibrations and weak equivalences.
\end{corollary}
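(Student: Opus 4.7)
The proof splits into two independent checks, each short given what is already available.

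The plan for the cofibration half is to invoke \Cref{IJinj} directly. By the very definition of $\I$-cofibration, I need to show that any $j\in\J$ has the left lifting property against every $\I$-injective $f:\X\to \Y$. But \Cref{IJinj} says that every such $f$ is automatically a $\J$-injective, meaning it has the right lifting property against all of $\J$; in particular against $j$. So $j\in\Cof(\I)$, with no further work.

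The plan for the weak equivalence half is to read off the components of each generator explicitly, using that left Kan extension along $\{s\}\hookrightarrow \R_+$ (resp.\ $\{t\}\hookrightarrow \R_+$) produces a persistent complex that is constantly $D^k$ for indices $\geq s$ (resp.\ $\geq t$) and $0$ below. For $j\colon 0\to \Di_s^k\in \J_\infty$, the component $j(i)$ is either $0\to 0$ or $0\to D^k$; since $D^k$ is acyclic, each component is a quasi-isomorphism. For $j\colon \Di_t^k\to \Di_s^k\in \J_0$ with $s<t$, the component $j(i)$ is $0\to 0$ when $i<s$, $0\to D^k$ when $s\leq i<t$, and the identity $D^k\to D^k$ when $i\geq t$. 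All three cases are quasi-isomorphisms, so $j\in\W$.

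There is no serious obstacle; the only thing to be a little careful about is confirming that the structure map of $\Di_s^k$ at $s\leq i\leq t$ is indeed the identity $D^k\to D^k$, which is immediate from the description of $\Di_s^k$ as the left Kan extension of the one-point diagram $s\mapsto D^k$ along $\{s\}\hookrightarrow\R_+$ (this makes the persistent complex constant on $[s,\infty)$), and that the map $\Di_t^k\to\Di_s^k$ itself is the identity on that overlap, which follows from the universal property of the Kan extension applied to the inclusion of posets $\{t\}\subset\{s<t\}$.
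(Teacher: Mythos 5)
Your proposal is correct and follows essentially the same route as the paper: the cofibration half is exactly the paper's argument (from \Cref{IJinj}, every $\I$-injective is a $\J$-injective, so each $j\in\J$ lifts against it, giving $\J\subseteq{}^\pitchfork(\I^\pitchfork)$), and the weak-equivalence half, which the paper dismisses as immediate, you simply spell out by computing the components of the generators, correctly noting that $D^k$ is acyclic and the structure maps of the Kan extensions are identities on the relevant overlaps.
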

\begin{proof}
    It is immediate that $\J \subseteq \W$. By \Cref{IJinj} we have $\I^\pitchfork \subseteq \J^\pitchfork$ from which it follows that $\J\subseteq \: ^\pitchfork (\J ^\pitchfork) \subseteq \: ^\pitchfork (\I ^\pitchfork)$.
\end{proof}

\begin{proposition}\label{cofcheck}
    $\Cof(\J)\subseteq \Cof(\I)\cap \W$.
\end{proposition}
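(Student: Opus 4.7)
The plan is to decompose the containment into its two parts and verify $\Cof(\J) \subseteq \Cof(\I)$ and $\Cof(\J) \subseteq \W$ separately.

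For the first containment, I would argue purely formally. By \Cref{JcofIcof} we already know $\J \subseteq \Cof(\I)$. Since $\Cof(\I) = \:^\pitchfork(\I^\pitchfork)$ is a class defined by a left lifting property, it is automatically closed under retracts, pushouts, coproducts, and transfinite compositions. Because $p\Ch_\Q$ is locally finitely presentable (as noted right before the statement), the small object argument applies to $\J$, and $\Cof(\J)$ coincides with the class of retracts of relative $\J$-cell complexes, i.e.\ the smallest class containing $\J$ that is closed under those same operations. Hence $\Cof(\J) \subseteq \Cof(\I)$.

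For the second containment, the strategy is to reduce to the projective model structure on $\Ch_\Q$ levelwise, exploiting the fact that colimits in the functor category $p\Ch_\Q = \Fun(\R_+,\Ch_\Q)$ are computed pointwise. First I would check that every map in $\J$ is, componentwise at each $r \in \R_+$, a trivial cofibration in the projective model structure on $\Ch_\Q$: a generator $\Di_t^k \to \Di_s^k$ (with $s<t<\infty$) evaluates to the identity of $0$ for $r<s$, to the inclusion $0 \hookrightarrow D^k$ for $s\le r < t$, and to the identity of $D^k$ for $r\ge t$; likewise $0 \to \Di_s^k$ evaluates to either $0\to 0$ or $0 \hookrightarrow D^k$. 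All of these are trivial cofibrations in $\Ch_\Q$.

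Now I would leverage that pushouts, transfinite compositions, coproducts, and retracts in $p\Ch_\Q$ are computed pointwise, so that evaluating a relative $\J$-cell complex at $r \in \R_+$ yields a transfinite composition of pushouts of coproducts of trivial cofibrations in $\Ch_\Q$. Since trivial cofibrations in $\Ch_\Q$ are closed under all these operations (a standard property of cofibrantly generated model categories, applied to the projective structure recalled in \Cref{modelstructurechaincomplexes}), each component of a relative $\J$-cell complex is a trivial cofibration, hence a quasi-isomorphism, so the whole map lies in $\W$. Closure of $\W$ under retracts is clear since $H^\ast$ is a functor, so retracts of relative $\J$-cell complexes remain in $\W$. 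Combined with the first part, this yields $\Cof(\J) \subseteq \Cof(\I) \cap \W$. The only step requiring any care is the pointwise reduction itself; once that is in place the rest is a bookkeeping exercise about classical properties of $\Ch_\Q$.
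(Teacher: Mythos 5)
Your argument is correct, but it takes a genuinely different route from the paper, mainly in the second half. For $\Cof(\J)\subseteq\Cof(\I)$ the paper argues in one line from \Cref{IJinj}: $\Inj(\I)\subseteq\Inj(\J)$ immediately gives ${}^{\pitchfork}(\J^{\pitchfork})\subseteq{}^{\pitchfork}(\I^{\pitchfork})$ by anti-monotonicity of lifting classes; your detour through $\J\subseteq\Cof(\I)$ (\Cref{JcofIcof}), saturation of the LLP class $\Cof(\I)$, and the small-object characterization of $\Cof(\J)$ as retracts of relative $\J$-cell complexes is valid but uses more machinery than needed. For $\Cof(\J)\subseteq\W$, the paper stays inside $p\Ch_\Q$: it observes that $\W$ is closed under retracts and transfinite composition (exactness of filtered colimits, computed pointwise) and then checks directly that a pushout of $\Di^k_t\to\Di^k_s$ has cofiber isomorphic to the acyclic complex $\Di^k_s/\Di^k_t$, so the long exact sequence yields a pointwise quasi-isomorphism. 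You instead evaluate at each $r\in\R_+$, note that the generators of $\J$ become trivial cofibrations (identities or $0\hookrightarrow D^k$) in the projective model structure on $\Ch_\Q$ recalled in \Cref{modelstructurechaincomplexes}, and let the closure of that class under coproducts, pushouts and transfinite composition do all the work, colimits in $p\Ch_\Q$ being pointwise. Your route avoids both the cofiber computation and the exactness-of-filtered-colimits argument, at the price of invoking the projective model structure on $\Ch_\Q$; it also gives the slightly stronger conclusion that components of relative $\J$-cell complexes are projective trivial cofibrations, not merely quasi-isomorphisms. One cosmetic remark: closure of trivial cofibrations under these operations holds in any model category, being an LLP class, so the appeal to cofibrant generation is unnecessary.
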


\begin{proof}
   \Cref{IJinj} implies that $\Inj(\I) \subseteq \Inj(\J)$, from which it follows that $\Cof(\J) \subseteq \Cof(\I)$. It remains to show that $\Cof(\J) \subseteq \W$. Note that the class $\W$ is closed under retracts and transfinite composition. This last statement follows from the fact that filtered colimits are exact (thus commute with cohomology) in $\Ch_Q$, hence also in $p\Ch_\Q$, since colimits are computed pointwise. The proof of the claim thus reduces to showing that pushouts of maps in $\J$ are weak equivalences. 
   
   Any pushout square 
    $$
    \begin{tikzcd}
        \Di_t^{k} \arrow[dr, phantom, "\scalebox{1}{$\ulcorner$}" , very near end, color=black] \ar[d] \ar[r, "{x_t}"] & \X \ar[d, "f"] \\
        \Di_s^{k} \ar[r] & \X'.
    \end{tikzcd}
    $$
    induces an isomorphism on cofibers $\X/\X' \cong \Di^k_s / \Di^k_t$. Since this complex is acyclic, the long exact sequence in cohomology shows that $f$ is a pointwise quasi-isomorphism.
\end{proof}

\begin{proposition}\label{Iinj_JinjW}
    The $\I$-injectives are exactly those $\J$-injectives that are also pointwise quasi-isomorphisms.
\end{proposition}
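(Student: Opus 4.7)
The plan is to prove the equality $\I^\pitchfork = \J^\pitchfork \cap \W$ by separate inclusions. The forward inclusion $\I^\pitchfork \subseteq \J^\pitchfork \cap \W$ is immediate from the preceding results: any $\I$-injective is $\J$-injective by \Cref{IJinj}, and since $\I_\infty \subseteq \I$, the characterization of $\I_\infty$-injectives as pointwise surjective quasi-isomorphisms in \Cref{qiso} forces $f \in \W$.

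For the reverse inclusion, suppose $f \colon \X \to \Y$ is $\J$-injective and a pointwise quasi-isomorphism. The characterization of $\J_\infty$-injectives shows $f$ is pointwise surjective, so by \Cref{qiso} it is $\I_\infty$-injective. The substantial work is to show $f$ is $\I_0$-injective, for which I would analyze the kernel $\K := \ker f$. The short exact sequence $0 \to \K \to \X \to \Y \to 0$ and its long exact sequence in cohomology show that $\K$ is pointwise acyclic. Moreover, the structure maps $\K(s \leq t)$ are pointwise surjective: given $a_t \in \K^k(t)$, the pair $(a_t, 0)$ lies in $\X^k(t) \times_{\Y^k(t)} \Y^k(s)$, and $\J_0$-injectivity lifts it to some $a_s \in \X^k(s)$ with $f(a_s) = 0$, i.e., $a_s \in \K^k(s)$.

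Given lifting data $(x_s, u_t, y_s)$ for a square out of $\Sp^k_{[s,t)} \to \Di^k_s$ with $k \geq 1$ in the form parametrized by \Cref{cube}, first apply $\J_0$-injectivity to $(u_t, y_s)$ to obtain $v_s \in \X^{k-1}(s)$ with $v_s \mapsto u_t$ and $f(v_s) = y_s$. The defect $c_s := x_s - dv_s$ is then a cocycle in $\K^k(s)$ vanishing at $t$. Acyclicity of $\K$ yields $\tilde a \in \K^{k-1}(s)$ with $d\tilde a = c_s$, but $\tilde a_t$ need not vanish in $\K^{k-1}(t)$. When $k = 1$, $\tilde a_t \in Z\K^0(t) = 0$ automatically, since $\K$ is non-negatively graded and acyclic. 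When $k \geq 2$, acyclicity provides $b_t \in \K^{k-2}(t)$ with $db_t = \tilde a_t$; one lifts $b_t$ to $b_s \in \K^{k-2}(s)$ via surjectivity of $\K(s \leq t)$ and sets $a := \tilde a - db_s$, which satisfies $da = c_s$ and $a_t = 0$. Then $u_s := v_s + a$ solves the lifting problem. The degenerate $k = 0$ case (where $\Di^0_s = 0$) reduces similarly to the vanishing $Z\K^0(s) = 0$.

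The main obstacle is producing a primitive in $\K^{k-1}(s)$ for $c_s$ that simultaneously vanishes at $t$; this is exactly the point at which one must combine the acyclicity of $\K$ with the surjectivity of its structure maps, both of which derive from the joint $\J$-injectivity and quasi-isomorphism hypotheses rather than either in isolation.
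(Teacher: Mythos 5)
Your proposal is correct and follows essentially the same route as the paper's proof: forward inclusion from \Cref{IJinj} and \Cref{qiso}, then for the converse a first lift via $\J$-injectivity, correction of the defect cocycle using acyclicity of $\K=\ker f$ together with surjectivity of the structure maps of $\K$ (which the paper phrases as $\K\to 0$ being a pullback of a $\J$-injective), and the low-degree cases handled by $Z\K^0=0$. The only differences are cosmetic (index shift and the order in which the correction term is adjusted at time $t$).
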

\begin{proof}
    It follows from \cref{qiso} and \ref{IJinj} that $\I^\pitchfork \subseteq \J^\pitchfork \cap \W$, where we use $\bI_\infty \subset \bI$ for the weak equivalence part. For the reverse inclusion, let $f:\X \to \Y$ be both $\J$-injective and a pointwise quasi-isomorphism. Being a $\J$-injective implies that $f$ is surjective. \Cref{qiso} implies that $f$ is $\I_\infty$-injective because $f\in \W$. 
    
    It remains to show that $f$ is also $\bI_0$ injective. There is a short exact sequence $$
    0 \to \K \to \X \overset{f}{\to} \Y \to 0
    $$
    with $H\K=0$. Let $s\leq t < \infty$ and $k\geq 0$. Assume given a commutative square 
    \begin{equation}\label{Ilift}
    \begin{tikzcd}
        \Sp_{[s,t)}^{k+1} \ar[d] \ar[r, "{(x_s,\:u_t)}"] & \X \ar[d, "f"] \\
        \Di_s^{k+1} \ar[r, "y_s"'] & \Y
    \end{tikzcd}
    \end{equation}
    for which a lift is to be found. The pair $(u_t, y_s)$ fits in the square of solid arrows $$
    \begin{tikzcd}
        \Di_t^{k+1} \ar[d] \ar[r, "{u_t}"] & \X \ar[d, "f"] \\
        \Di_s^{k+1} \ar[ur, dashed, "u_s"]\ar[r, "y_s"'] & \Y
    \end{tikzcd}
    $$
    where a lift $u_s \in \X^k$ exists since $f$ is $\J$-injective. Now $f(du_s)=df(u_s)=dy_s=f(x_s)$, so $du_s-x_s\in Z\K^{k+1}$ is a cocycle. Since $H\K =0$ we can find $z_s\in \K^{k}$ with $dz_s=x_s-du_s$. At the later time $t$, we find $dz_t=x_t-du_t=0$, so $z_t\in Z\K^k$. 

    In the case where $k=0$, we have $Z\K^0=H\K^0 = 0$, which implies that $z_t=0$. The element $v_s=u_s+z_s$ thus provides a lift to the square in (\ref{Ilift}), since both $dv_s=du_s+dz_s=x_s$ and $f(v_s)=f(u_s)=y_s$.
    
    Assume now that $k\geq 1$. Since $f$ is a quasi-isomorphism, there must be $w_t \in \K^{k-1}$ with $dw_t=z_t$. Being a pullback of a $\J$-injective, the unique map $\K \to 0$ is also a $\J$-injective. Applying $\J$-injectivity to the square $$
    \begin{tikzcd}
        \Di_t^{k} \ar[d] \ar[r, "{w_t}"] & \K \ar[d] \\
        \Di_s^{k} \ar[ur, dashed, "w_s"]\ar[r] & 0,
    \end{tikzcd}
    $$
    we find $w_s \in \K^{k-1}$ with $\K(s\leq t)(w_s)=w_t$, satisfying $d(z_s-dw_s)=x_s-du_s$ and $z_t-dw_t=0$. Setting $v_s=u_s+z_s-dw_s$, we find that $f(v_s)=f(u_s)=y_s$ and $dv_s=dz_s+du_s=x_s$. Finally $v_t=z_t-dw_t+u_t=u_t$, meaning that $v_s$ defines a lift for (\ref{Ilift}) as desired.

    Lastly, we consider (\ref{Ilift}) in the case $k=-1$. In this situation, the lifting problem corresponds to a cocycle $x_s\in Z\X^0$ with $f(x_s)=0$. Since $f$ is a quasi-isomorphism, it induces an isomorphism $Z\X^0 \cong Z\Y^0$ so that $x_s=0$, and the lifting problem is solved. 
\end{proof}

\begin{theorem}[Interval-Sphere Model Structure]
There is a cofibrantly generated model structure on $p\Ch_\Q$ for which $\W$ is the class of weak equivalences and the sets $\I, \J$ are those of the generating cofibrations and generating trivial cofibrations, respectively.
\end{theorem}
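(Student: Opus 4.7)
The plan is to verify the hypotheses of Kan's recognition theorem for cofibrantly generated model structures (cf. Hovey, Thm.~2.1.19) with the weak equivalences $\W$ and the generating sets $\I$, $\J$ defined above. The heavy lifting has essentially already been done in \Cref{IJinj}, \Cref{JcofIcof}, \Cref{cofcheck}, and \Cref{Iinj_JinjW}; the theorem will then be obtained by assembling these ingredients.

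First, I would dispense with the formal prerequisites. The category $p\Ch_\Q$ is bicomplete, being a functor category into a bicomplete category, and is locally finitely presentable (indeed, colimits are computed pointwise and $\Ch_\Q$ is locally finitely presentable). In particular, every object of $p\Ch_\Q$ is small relative to any class of maps, which settles the smallness hypotheses on the domains of $\I$ and of $\J$. Next, the class $\W$ of pointwise quasi-isomorphisms satisfies the two-out-of-three property and is closed under retracts, since both properties are detected pointwise and $\Ch_\Q$ has this feature.

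Next, I would check the lifting/cell-class conditions. The inclusion $\mathsf{cell}(\J)\subseteq \Cof(\I)\cap \W$ is exactly \Cref{cofcheck}, since $\mathsf{cell}(\J)\subseteq \Cof(\J)$. The identification
\[
\Inj(\I)=\Inj(\J)\cap \W
\]
is precisely the content of \Cref{Iinj_JinjW}, which encodes both the inclusion $\Inj(\I)\subseteq \Inj(\J)\cap \W$ (required as one of the hypotheses of the recognition theorem) and its converse $\W\cap \Inj(\J)\subseteq \Inj(\I)$ (which serves as the sixth hypothesis in Hovey's formulation). At this point, all the conditions of the recognition theorem are satisfied, and we conclude the existence of a cofibrantly generated model structure on $p\Ch_\Q$ with $\W$ as weak equivalences, $\Cof(\I)$ as cofibrations, $\Inj(\J)$ as fibrations, $\I$ as a set of generating cofibrations, and $\J$ as a set of generating trivial cofibrations.

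The only step that requires any real thought is already packaged into \Cref{Iinj_JinjW}, whose proof establishes the delicate equality $\Inj(\I)=\Inj(\J)\cap \W$ by exploiting the acyclicity of the kernel of a surjective quasi-isomorphism. Everything else in the argument is routine bookkeeping: the verification of the remaining hypotheses consists of an application of local presentability together with the direct invocation of \Cref{JcofIcof} and \Cref{cofcheck}. Thus the main content of the theorem resides in the preparatory propositions, and the proof is a direct appeal to Kan's recognition criterion.
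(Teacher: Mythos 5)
Your proposal is correct and follows essentially the same route as the paper: both invoke Kan's recognition theorem, settle smallness via local finite presentability of $p\Ch_\Q$, note that $\W$ is closed under retracts and satisfies 2-out-of-3, and then cite \Cref{cofcheck} for the acyclicity condition on $\J$-cell maps and \Cref{Iinj_JinjW} for the equality $\Inj(\I)=\Inj(\J)\cap\W$. The only cosmetic difference is the reference used for the recognition theorem and your phrasing via $\mathsf{cell}(\J)\subseteq\Cof(\J)$, which is equivalent to the paper's condition $\Cof(\J)\subseteq\Cof(\I)\cap\W$.
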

\begin{proof}
    We use the recognition theorem of D. Kan, recorded here as \cite[Theorem~11.3.1]{HH}. The following conditions need to be checked in order for the theorem to apply.
    \begin{enumerate}
        \item\label{1} $\W$ is closed under retracts and satisfies 2-out-of-3.
        \item\label{2} Persistent spheres and disks are $\kappa$-compact in $p\Ch_\Q$ for some cardinal $\kappa$.
        \item\label{3} $\Inj(\I)= \Inj(\J)\cap \W$.
        \item\label{4} $\Cof(\J)\subseteq \Cof(\I)\cap \W$.
    \end{enumerate}
    Condition \ref{1} is immediate, while \ref{2} follows directly from $p\Ch_\Q$ being a locally finitely presentable category. Condition \ref{3} was proved as \Cref{Iinj_JinjW}, and Condition \ref{4} is \Cref{cofcheck}.
\end{proof}

\subsection{Comparison with the projective model structure}\label{not-projective}

As a category of diagrams, $p\Ch_\Q=\Fun(\R_+, \Ch_\Q)$ admits two other natural model structures, namely the projective and injective model structures. Both these structures exist and are cofibrantly generated. This is proved for instance in \cite[Theorem~11.6.1]{HH} for the case of the projective model structure, and in \cite[Theorem~2.2]{hoveysheaf} for the injective model structure, as a result of the fact that $p\Ch_\Q=\Ch(\Fun(\R_+, \mathsf{Vec}_\Q))$ is a category of complexes in a Grothendieck abelian category.

Through an example, we demonstrate that the interval sphere model structure is different from the projective model structure on $p\Ch_\Q$. More precisely, we exhibit  a map of persistent cochain complexes that is pointwise surjective, though not a fibration in the interval sphere model structure. We show later (\Cref{non-injective-model-example}) that the interval-sphere model structure is not the injective model structure after introducing more theory.

Given non-negative real numbers $s<t<u$, denote by $q:\X\to\Y$ the quotient map $\Di^k_s \to \Di^k_s/\Di^k_t$. It is pointwise surjective, but we claim it is not a fibration in the interval-sphere model structure. The element $1_s\in \Y^k(s)=\Q$ satisfies $\Y(s\leq u)(1_s)=0=f(0_{u})$ where $0_{u}\in \X^k(u)$. However the pair $(1_s, 0_u)\in \Y(s)\times_{\Y(u)}\X(u)$ admits no pre-image under the induced map $\X(s)\to \Y(s)\times_{\Y(u)}\X(u)$, since $f(s)^{-1}(1_s)=\{1_s\}$ and $\X(s<u)(1_s)=1_u\ne 0_u$. In other words, no lift can exist in the diagram
\begin{center}
    \begin{tikzcd}
        \Di^{k}_{t} \ar[r, "0"] \ar[d, hookrightarrow] & \Di^{k}_{s} \ar[d, "q"] \\
        \Di^k_{s} \ar[r, "q"] & \Di^{k}_{s}/ \Di^{k}_{t}.
    \end{tikzcd}
\end{center}

\section{Cofibrancy and local compactness}  In this section, we define local compactness as a measure of being locally interval decomposable. We prove that this is a necessary condition for cofibrancy in the interal-sphere model structure, distinguishing it from the usual injective model structure. 

\subsection{Compactness in categories of persistent objects}  In this section we study the compact objects in various categories of persistent objects. Recall that an object $X$ in a category $\cC$ is \textit{compact} if the functor $\Hom(X,-)$ preserves filtered colimits. We denote by $\cC^\omega \subseteq \cC$ the full subcategory on the compact objects. We use the terms \textit{finite type} and \textit{finite} for spaces, (graded) vector spaces, complexes and CDGAs. For each of these categories, we say a (co)persistent object $\X$ is of finite type (resp. finite) if its components $\X(t)$ are finite type (resp. finite) for all indices $t$.

\begin{proposition}\label{compactpch}
    The compact objects in $p\mathsf{Vec}_\Q$ are exactly the tame modules of finite type.
\end{proposition}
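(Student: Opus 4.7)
The plan is to prove the two implications of the characterization separately; the forward direction is essentially formal via the Kan extension adjunction, while the reverse uses a filtered-colimit argument to force the identity on $\V$ to factor through a tame finite-type subobject.

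For the forward direction, suppose $\V$ is tame and of finite type. Then $\V \cong \Lan_t X$ for $t \colon \{t_0, \ldots, t_n\} \hookrightarrow \R_+$ the inclusion of a discretization and $X$ a finite sequential diagram of finite-dimensional vector spaces. The Kan extension adjunction yields a natural isomorphism
$$\Hom_{p\sVec}(\V, -) \;\cong\; \Hom_{\Fun(\{t_0, \ldots, t_n\}, \sVec)}(X, t^*(-)).$$
Since colimits in $p\sVec$ are computed pointwise, $t^*$ preserves all colimits; moreover, $X$ is compact in the finite functor category, as a finite diagram of finite-dimensional (hence compact) vector spaces. The composite on the right therefore preserves filtered colimits, whence so does $\Hom_{p\sVec}(\V, -)$.

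For the converse, I would realize an arbitrary $\V \in p\sVec$ as a filtered colimit of its tame finite-type subobjects. Each $v \in \V(s)$ generates such a subobject $\langle v \rangle \subseteq \V$, defined by $\langle v \rangle(r) = 0$ for $r < s$ and $\langle v \rangle(r) = \Q \cdot \V(s \leq r)(v)$ for $r \geq s$, which is tame with discretization $\{s\}$ and has components of dimension at most $1$. Let $\mathcal{F}$ denote the poset of tame finite-type subobjects of $\V$ ordered by inclusion. Then $\mathcal{F}$ is directed (since the sum of two such subobjects remains tame and finite type, by the remark below) and covers $\V$ (every $v$ lies in some $\langle v \rangle \in \mathcal{F}$), so $\V \cong \colim_{\V' \in \mathcal{F}} \V'$. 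If $\V$ is compact, then $\id_\V$ factors as $\V \to \V'' \hookrightarrow \V$ through some $\V'' \in \mathcal{F}$; the inclusion $\V'' \hookrightarrow \V$ must then be surjective, forcing the equality $\V = \V''$, which is tame and of finite type.

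The main technical obstacle is verifying the directedness of $\mathcal{F}$, namely that $\V_1 + \V_2 \subseteq \V$ remains tame whenever $\V_1$ and $\V_2$ are. Finite-type-ness is immediate from the pointwise bound $\dim(\V_1 + \V_2)(r) \leq \dim \V_1(r) + \dim \V_2(r)$. For tameness, one analyses the short exact sequence $0 \to \V_1 \cap \V_2 \to \V_1 \oplus \V_2 \to \V_1 + \V_2 \to 0$: the middle term is tame as a finite direct sum of tame modules, the kernel $\V_1 \cap \V_2$ is pointwise finite-dimensional with jumps confined to the finitely many jump points of $\V_1$ and $\V_2$, and combining the jump sets of all three terms yields a finite discretization for $\V_1 + \V_2$.
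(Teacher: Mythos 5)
Your forward implication is correct and in fact takes a different (and arguably cleaner) route than the paper: the paper exhibits a finite presentation of a tame finite-type module over $\Q[X^{\R_+}]$, whereas you use the adjunction $\Lan_t\dashv t^*$ together with compactness of a finite diagram of finite-dimensional vector spaces over a finite poset; both work (you should only note that the discretization must contain $0$, or the module must vanish below $t_0$, for $\V\cong\Lan_t X$ to hold).

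The converse, however, has a genuine gap: the family $\mathcal{F}$ of tame finite-type subobjects is in general neither a cover of $\V$ nor directed, so $\V$ is not the filtered colimit you claim. First, the cyclic submodule $\langle v\rangle$ generated by $v\in\V(s)$ is a quotient of $\bI_{[s,\infty)}$ and can be a closed interval $\bI_{[s,t]}$, which is \emph{not} tame (as the paper remarks after the interval decomposition theorem); indeed, for $\V=\bI_{[0,c]}$ the only tame subobject is $0$, so the union of all tame finite-type subobjects is a proper subobject and the strategy fails outright. Second, directedness fails: in $\V=\bI_{[0,c]}\oplus\bI_{[0,\infty)}$ with generators $v,w$, the submodules generated by $v+w$ and by $w$ are each isomorphic to $\bI_{[0,\infty)}$, hence tame of finite type, but their sum is all of $\V$, which is not tame, so they have no upper bound in $\mathcal{F}$. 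This also pinpoints the false step in your technical remark: their intersection is $\bI_{(c,\infty)}$, whose jump at $c$ is not among the jump points of the two submodules — the relative position of two tame submodules inside an ambient module can change at times invisible to their individual isomorphism types, so the intersection need not be tame at all. To repair the converse, replace subobjects by a filtered diagram of finitely presented graded $\Q[X^{\R_+}]$-modules mapping to $\V$ (e.g.\ the canonical one, available since the category is locally finitely presentable): compactness then makes $\V$ a retract, hence a direct summand, of a finitely presented module, and one identifies finitely presented modules with tame finite-type ones — which is essentially the paper's argument via $p\sVec\cong\gr\Mod_{\Q[X^{\R_+}]}$.
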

\begin{proof}
    Let us write $R=\Q[X^{\R_+}]$ for the `polynomial' ring of \Cref{persistencesection}. In view of the identification $p\mathsf{Vec}_\Q\cong \gr\Mod_{R}$, the compact objects in $p\mathsf{Vec}_\Q$ are exactly the graded $R$-modules of finite presentation. Given a persistence module $\V$, such a finite presentation amounts to a short exact sequence $$
    0 \to \bigoplus_{j\in J} X^{t_j}R \to \bigoplus_{i\in I} X^{s_i} R \to \V \to 0
    $$
    where $I$ and $J$ are finite sets. This shows that $\V$ is a quotient of tame modules of finite type, hence is itself tame and of finite type. Conversely, if $\V$ is tame and of finite type, it admits an interval decomposition 
    $
    \V\cong \bigoplus_{i\in I} \I_{[s_i, t_i)}
    $
    consisting of only a finite number of intervals. Such a decomposition implies the existence of a short exact sequence
    $$
    0 \to \bigoplus_{i\in I} \I_{[t_i, \infty)} \to \bigoplus_{i\in I} \I_{[s_i, \infty)} \to \V \to 0,
    $$
    which translates to a finite presentation of $\V$ in view of the isomorphism of persistence modules $X^tR \cong \I_{[t,\infty)}$.
\end{proof}
% Recall from \Cref{notation} that we say that a complex $X\in \Ch_\Q$ is of \textit{finite type} if $\bigoplus_{k\geq 0} X^k$ is a finite dimensional vector space. Equivalently, $X$ is of finite type if it is a bounded complex of finite dimensional vector spaces. It is well known that complexes of finite type are the compact objects in $\Ch_\Q$. Similarly, we say that a space $X\in \sSet$ is of \textit{finite type} if it has a finite number of non-degenerate simplices.
\begin{proposition}
    The compact objects of $p\Ch_\Q$ are exactly the tame finite complexes.
\end{proposition}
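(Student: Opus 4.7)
The plan is to bootstrap \Cref{compactpch} through the pointwise identification $p\Ch_\Q \simeq \Ch(p\sVec)$, so that compactness in $p\Ch_\Q$ reduces to a degree bound together with compactness of each component persistence module in the sense of \Cref{compactpch}. The argument splits along the two directions.

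For the implication ``tame and finite $\Rightarrow$ compact'', I would begin by observing that a discretization of $\X$ automatically discretizes each component $\X^k$, and that finiteness of the values $\X(t)$ combined with there being only finitely many distinct values yields a uniform $N$ with $\X^k=0$ for $k>N$. By \Cref{compactpch} each $\X^k$ is then compact in $p\sVec$. Given a filtered system $\Y=\colim_\alpha \Y_\alpha$ in $p\Ch_\Q$, a chain map $\X\to\Y$ amounts to a finite family of component maps $f^k:\X^k\to \Y^k = \colim_\alpha \Y_\alpha^k$ constrained by the finitely many equations $df^k=f^{k+1}d$. Compactness of each $\X^k$ lets me lift each component to some $\Y_{\alpha_k}^k$ and, likewise, enforce each commutation equation at some larger index; passing to an upper bound over the finite set of indices produces a single representative map $\X \to \Y_\alpha$, yielding compactness of $\X$.

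For the converse, I would first produce the degree bound by writing $\X\cong\colim_n\sigma_{\leq n}\X$, where $\sigma_{\leq n}$ denotes smart cohomological truncation, that is, $(\sigma_{\leq n}\X)^i=\X^i$ for $i<n$, $\ker d^n$ for $i=n$ and $0$ for $i>n$. This filtered colimit is computed pointwise, so compactness forces $\id_\X$ to factor through some $\sigma_{\leq N}\X \hookrightarrow \X$, which forces $\X^k=0$ for $k>N$. Next, for each $k\geq 0$, I would introduce the functor $\underline{D}^k:p\sVec\to p\Ch_\Q$ placing $\V$ in degrees $k-1$ and $k$ with identity differential; a direct chain-map computation yields a natural isomorphism
\[
\Hom_{p\Ch_\Q}(\X,\underline{D}^k(\V))\;\cong\;\Hom_{p\sVec}(\X^k,\V),
\]
exhibiting the adjunction $(-)^k \dashv \underline{D}^k$. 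Since $\underline{D}^k$ manifestly preserves filtered colimits, its left adjoint $(-)^k$ carries compact objects to compact objects, so each $\X^k$ is compact in $p\sVec$ and hence tame of finite type by \Cref{compactpch}. Taking the union of the finitely many discretizations of the nonzero $\X^k$ furnishes a discretization of $\X$, while pointwise finite-dimensionality combined with the degree bound $N$ ensures that each $\X(t)$ is a finite cochain complex.

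The main obstacle I expect is the coordination of filtered-colimit lifts across all cohomological degrees in the forward direction: this is only possible because $\X$ is bounded, which is exactly what the smart-truncation argument extracts from compactness on the reverse side. In effect, the boundedness of tame finite complexes is the structural hinge that lets compactness pass back and forth through the degreewise identification, reducing everything to the already-proven persistence-module case.
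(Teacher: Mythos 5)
Your proposal is correct and follows essentially the same route as the paper: both reduce through the degreewise identification $p\Ch_\Q \simeq \Ch(p\sVec)$ to \Cref{compactpch}, your truncation, adjunction, and finite-lifting arguments simply supplying a proof of the intermediate fact that the paper invokes without proof (compact complexes are exactly the bounded complexes of compact modules). Two small adjustments: for $k=0$ the complex $\underline{D}^0(\V)$ would sit in degrees $-1$ and $0$, which do not exist in the nonnegatively graded setting, so use instead $\V$ concentrated in degree $0$ (the same computation gives $\Hom_{p\Ch_\Q}(\X,\V)\cong\Hom_{p\sVec}(\X^0,\V)$ since differentials increase degree); and in the forward direction the injectivity half of the comparison map should be noted, which follows by the same argument from compactness of each $\X^k$ and the finiteness of the set of degrees.
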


\begin{proof}
    In view of the identification $p\Ch_\Q \cong \Ch(\gr\Mod_{\Q[X^{\R_+}]})$, the compact objects in $p\Ch_\Q$ are the bounded complexes of finitely presented $\R_+$-graded $\Q[X^{\R_+}]$-modules. By \Cref{compactpch}, these are the bounded complexes of tame persistence modules of finite type.
\end{proof}

\begin{proposition} \label{compact-p-complexes}
    The compact objects in copersistent spaces are precisely the tame finite spaces.
\end{proposition}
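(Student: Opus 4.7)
The plan is to mirror the pattern of \Cref{compactpch}, splitting into the two inclusions. For the forward implication (tame and finite implies compact), I will express a tame finite copersistent space $\bX$ as a left Kan extension $\Lan_t X$ of a finite diagram $X_0 \to X_1 \to \cdots \to X_n$ of finite simplicial sets along the inclusion $t : \{t_0 < \cdots < t_n\} \hookrightarrow \R_-$. The adjunction $\Lan_t \dashv t^*$ then identifies $\Hom(\bX,-)$ with a finite limit built from the sets $\Hom(X_i,-(t_i))$. Each such factor preserves filtered colimits: evaluation at $t_i$ preserves colimits pointwise in $p^*\sSet$, and $\Hom(X_i,-)$ preserves filtered colimits because the finite simplicial set $X_i$ is compact in $\sSet$. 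Since filtered colimits commute with finite limits in $\Set$, it follows that $\bX$ is compact.

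The converse is the substantive direction, for which I will argue by exhaustion and retract. The first step is to realize an arbitrary copersistent space $\bX$ as a filtered colimit of its tame finite sub-copersistent spaces: given a finite set $T = \{t_0 < \cdots < t_n\} \subset \R_-$ together with a compatible family of finite subcomplexes $X_i \subseteq \bX(t_i)$, the Kan extension $\Lan_t X$ yields such a sub-object, and these are filtered under inclusion since unions of finite discretizations and of finite subcomplexes are again finite. Compactness of $\bX$ will then force $\id_{\bX}$ to factor through one of these sub-objects, realizing $\bX$ as a retract of a tame finite copersistent space. I will conclude by checking that tame finite copersistent spaces are closed under retracts in $p^*\sSet$: retracts of isomorphisms are isomorphisms, so tameness passes to retracts with the same discretization, and a retract in $\sSet$ is a monomorphism onto a sub-simplicial set, so each level of the retract is a sub-object of a finite simplicial set and thus finite.

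The main obstacle is the exhaustion step: verifying that the filtered system of tame finite sub-copersistent spaces genuinely computes $\bX$ as a colimit. Concretely, I must show that every finite subcomplex $K \subseteq \bX(t)$ extends to a tame finite sub-object of $\bX$, which one achieves by declaring the sub-copersistent space to be $\emptyset$ at indices below $t$ and equal to the image of $K$ in $\bX(s)$ for $s \geq t$, checking that this assembles into a genuine sub-functor. Together with the local finite presentability of $\sSet$ and pointwise computation of colimits in $p^*\sSet$, this assembles the pointwise presentations of each $\bX(t)$ as a filtered colimit of finite subcomplexes into a single coherent filtered system exhausting $\bX$.
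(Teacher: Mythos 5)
Your first direction (tame finite $\Rightarrow$ compact, via the Kan-extension adjunction and commutation of finite limits with filtered colimits in $\Set$) is fine, and is in fact more explicit than what the paper records. The genuine gap is in the converse, in your exhaustion step: the tame finite sub-objects of an arbitrary copersistent space do \emph{not} form a filtered system, and the sub-object generated by a finite subcomplex need not be tame. Concretely (parametrizing copersistent spaces by $\R_+^{\op}$ as in the paper's proof), let $\X$ be the copersistent discrete space with $\X(s)=\{a,b\}$ for $s\geq 1$, $\X(s)=\{*\}$ for $s<1$, the structure maps collapsing $a,b\mapsto *$. Each of $a,b\in\X(2)$ generates a tame finite sub-object isomorphic to the representable $\one_{\leq 2}$, but their union --- equivalently, the sub-object generated by the finite subcomplex $K=\{a,b\}\subseteq\X(2)$ produced by your ``image of $K$'' construction --- has value $\{a,b\}$ on $[1,2]$ and $\{*\}$ on $[0,1)$, and the map from parameter $1$ down to parameter $s$ fails to be an isomorphism for \emph{every} $s<1$. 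Under the convention that makes the representables $\one_{\leq t}$ tame, a finite discretization would then have to contain a point of $[s,1)$ for every $s<1$, which is impossible; the jump is ``closed on the wrong side''. So your justification ``unions of finite discretizations and of finite subcomplexes are again finite'' addresses finiteness but not the real issue: isomorphisms on two tame pieces can merge downstream at a threshold that is not attained, destroying tameness of the union. Without a filtered diagram of tame finite sub-objects whose colimit is $\X$, compactness cannot be invoked to factor $\id_{\X}$ through a stage, and the retract argument never starts. (This particular $\X$ is not compact, but your argument needs the exhaustion for an arbitrary object, before anything structural about $\X$ is known.)

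The standard repair is essentially the paper's route: in the presheaf category $\Fun(\R_+^{\op}\times\DDelta^{\op},\Set)$, every object is a filtered colimit of finitely presentable ones, and the compact objects are exactly the finite colimits of representables (up to retract); each representable is tame and finite, and a colimit of a \emph{finite} diagram of tame finite objects is again tame and finite, because one can take the union of the finitely many discretizations and observe that away from these values the entire diagram maps by isomorphisms, hence so does the pointwise colimit. Your closing observation that tame finite copersistent spaces are closed under retracts is correct and is exactly what finishes the argument from there. One further point worth making explicit: your Kan-extension reading of tameness (empty below the smallest discretization point of $\R_-$) is the reading under which the proposition is true --- with the bare ``only finitely many non-isomorphisms'' reading, the constant point would count as tame and finite yet is not compact, as one sees from the filtered colimit of the $\one_{\leq n}$ --- so that convention should be stated and used consistently in both directions.
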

\begin{proof}
    The category of copersistent spaces can equivalently be described as the presehaf category
    $$
    \Fun(\R_+^\op \times \DDelta^\op, \Set).
    $$
    The compact objects in a presheaf category are precisely the finite colimits of representables. Given $n\in \N$ and $t\geq 0$, the representable $\Hom_{\R_+ \times \DDelta}(-,t\times [n])$, when seen as a copersistent space, can be described as follows
    $$
    (\mathbb{1}_{\leq t} \times \Delta^n) : s \mapsto \begin{cases}
        \Delta^n & \text{when $s \leq t$}, \\
        \varnothing & \text{otherwise,}
    \end{cases}
    $$
    with structure maps being either identities or the trivial map $\varnothing \to \Delta^n$. As such, it is tame and finite. Accordingly, any compact copersistent space $\X$ is tame and finite, since it admits a surjection
    $$
    \coprod_{1\leq i \leq n} (\mathbb{1}_{\leq t_i} \times \Delta^{n_i}) \twoheadrightarrow \X
    $$
    from a tame and finite copersistent space.
\end{proof}

\subsection{Compactly generated model structure} We prove now that the interval sphere model structure is compactly generated. The payoff is that $\bI$-cell complexes can be expressed as countable sequences of (potentially uncountably many) cell attachments. We present the basic definitions of compactly generated model categories and refer to \cite{May_Ponto_2011} for a complete reference. 

\begin{definition} 
    For an ordinal $\lambda$ and an object $\X \in p\Ch_\Q$, a \textit{relative $\bI$-cell $\lambda$-complex} under $\X$ is a map $f : \X \to \Y$ that is a transfinite composite of a $\lambda$-sequence such that $\Y_0 = \X$ and for $\alpha +1 < \lambda$, the object $\Y_{\alpha+1}$ is obtained as a pushout
    % https://q.uiver.app/#q=WzAsNCxbMCwwLCJcXGJpZ29wbHVzX3tcXGdhbW1hIFxcaW4gXFxHYW1tYV9cXGFscGhhfSBcXFNwXntrKzF9X3tbc19cXGdhbW1hLHRfXFxnYW1tYSl9Il0sWzIsMCwiXFxZX1xcYWxwaGEiXSxbMCwyLCJcXGJpZ29wbHVzX3tcXGdhbW1hIFxcaW4gXFxHYW1tYV9cXGFscGhhfSBcXERpXntrKzF9X3tzX1xcZ2FtbWF9Il0sWzIsMiwiXFxZX3tcXGFscGhhKzF9Il0sWzAsMl0sWzIsM10sWzEsM10sWzAsMV0sWzMsMCwiIiwxLHsic3R5bGUiOnsibmFtZSI6ImNvcm5lciJ9fV1d
\[\begin{tikzcd}[sep=small]
	{\bigoplus_{\gamma \in \Gamma_\alpha} \Sp^{k+1}_{[s_\gamma,t_\gamma)}} && {\Y_\alpha} \\
	\\
	{\bigoplus_{\gamma \in \Gamma_\alpha} \Di^{k+1}_{s_\gamma}} && {\Y_{\alpha+1}}
	\arrow[from=1-1, to=3-1]
	\arrow[from=3-1, to=3-3]
	\arrow[from=1-3, to=3-3]
	\arrow[from=1-1, to=1-3]
	\arrow["\lrcorner"{anchor=center, pos=0.125, rotate=180}, draw=none, from=3-3, to=1-1]
\end{tikzcd}\] over a set $\Gamma_\alpha$ of arbitrary cardinality. A \textit{sequential} relative $\bI$-cell complex is a relative $\bI$-cell $\omega$-complex where $\omega$ is a countable ordinal. 
\end{definition} 

A morphism $f : \X \to \Y$ is a relative $\bI$-cell complex $\X$ if there exists some ordinal $\lambda$ such that $\X$ is a relative $\bI$-cell $\lambda$-complex. The set of relative $\bI$-cell complexes is denoted $\text{cell}(\bI)$. In a cofibrantly generated model category with cofibrant generators $\bI$, the set of cofibrations consists of retracts of $\text{cell}(\bI)$.

\begin{definition}
    An object $\bU \in p\Ch_\Q$ is \textit{compact with respect to $\bI$} if for every relative $\bI$-cell $\omega$-complex $f : \X \to \Y = \colim \Y_i$, the map
    $$\colim_i \Hom(\bU,\Y_i) \to \Hom(\bU,\Y)$$ is a bijection. We say a cofibrantly generated model category with cofibrant generators $\bI$ is \textit{compactly generated} if all domain objects in $\bI$ are compact with respect to relative sequential $\bI$-cell complexes. 
\end{definition}

In a compactly generated model category, the set of cofibrations is equal to the set of retracts of sequential relative $\bI$-cell complexes \cite[15.2.1]{May_Ponto_2011}.

\begin{proposition}\label{cofib-gen}
    The interval-sphere model structure is compactly generated.
\end{proposition}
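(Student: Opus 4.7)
The plan is to reduce the claim to the previously established fact (\Cref{compact-p-complexes} and the analogous statement for $p\Ch_\Q$) that compact objects in $p\Ch_\Q$ are the tame finite complexes, and then observe that every domain appearing in $\bI$ is such a complex.

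First I would identify the set of domains of $\bI = \bI_0 \cup \bI_\infty$: these are precisely the persistent spheres $\Sp^k_{[s,t)}$ with $k \in \N$ and $0 \leq s < t \leq \infty$ (with the convention that $t = \infty$ covers the domains of $\bI_\infty$). Each such persistent sphere is tame: the sphere $\Sp^k_{[s,t)}$ is the left Kan extension of $S^k \to 0$ along $\{s < t\} \hookrightarrow \R_+$, so it is discretized by $\{s,t\}$ (or by $\{s\}$ in the case $t = \infty$). Moreover, it is componentwise finite, taking values $S^k$ or $0$. Hence by the proposition classifying compact objects of $p\Ch_\Q$ as tame finite complexes, every domain of $\bI$ is compact in $p\Ch_\Q$.

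Next, compactness of $\bU = \Sp^k_{[s,t)}$ in $p\Ch_\Q$ means that $\Hom(\bU, -)$ preserves all filtered colimits. Given a relative $\bI$-cell $\omega$-complex $f : \X \to \Y$ with $\Y = \colim_{i < \omega} \Y_i$, the diagram $(\Y_i)_{i < \omega}$ is a sequential, hence filtered, diagram in $p\Ch_\Q$, and $\Y$ is precisely its colimit (colimits in $p\Ch_\Q$ are computed pointwise, so the transfinite composite coincides with the categorical colimit). Consequently the canonical comparison map
\[
\colim_i \Hom(\bU, \Y_i) \;\longrightarrow\; \Hom(\bU, \Y)
\]
is a bijection, which is exactly the condition that $\bU$ be compact with respect to sequential relative $\bI$-cell complexes.

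The only point requiring a brief comment, rather than a real obstacle, is the identification of the transfinite composite defining a cell $\omega$-complex with the honest colimit of the underlying $\omega$-sequence; this is automatic because pushouts and sequential colimits in $p\Ch_\Q = \Fun(\R_+, \Ch_\Q)$ are computed pointwise in $\Ch_\Q$, and in $\Ch_\Q$ sequential colimits of injective maps (such as those arising from pushouts along maps in $\bI$) are the usual union. Applying this to every domain of $\bI$ yields the required compactness and hence the claim.
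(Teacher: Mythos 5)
Your proposal is correct and follows essentially the same route as the paper: identify the domains of $\bI$ as the interval spheres $\Sp^k_{[s,t)}$ (including $t=\infty$), note they are tame and finite, hence compact in $p\Ch_\Q$ by the classification of compact objects, and conclude compactness with respect to sequential relative $\bI$-cell complexes. The paper's proof is just a terser version of this; your added checks (tameness/finiteness of the spheres, identification of the transfinite composite with the filtered colimit) are exactly the details it leaves implicit.
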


\begin{proof}
    The set of source objects in $\bI$ are the interval spheres $\Sp_{[s,t)}^k$. By \Cref{compact-p-complexes}, these are compact with respect to all filtered colimits, and thus compact with respect to sequential relative $\bI$-cell complexes.
\end{proof}

\subsubsection{Locally compact persistence modules} Local compactness is the central concept that proves to be a necessary condition for $\bI$-cell complexes. We first define and study local compactness at the level of persistence modules.

\begin{definition}
    Let $\V \in p\mathsf{Vec}_\Q$. 
    \begin{enumerate}
        \item A \textit{compact neighbourhood} of $v \in \V$ is a compact persistence module $\bU_v$ satisfying \linebreak $v \in \bU_v \subseteq \V$.
        \item A persistence module $\V$ is \textit{locally compact} if every point $v \in \V$ admits a compact neighbourhood.
    \end{enumerate} 
\end{definition}

Since $\bU_v$ is compact if and only if it admits a finite interval decomposition, we can think of local compactness at $v$ as local interval decomposability around $v$. For a set of submodules $\{ \bU_\alpha \subseteq \V \},$ the \textit{union} $\bigcup_\alpha \bU_\alpha$ is the image of the map $\bigoplus_\alpha \bU_\alpha \twoheadrightarrow \V$ under the maps induced by canonical inclusions $\bU_\alpha \hookrightarrow \V$. We call the set $\{ \bU_\alpha \}$ a \textit{cover} if $\bigcup_\alpha \bU_\alpha = \V$ or equivalently if the map $\bigoplus_\alpha \bU_\alpha \twoheadrightarrow \V$ is surjective. It is not hard to see that ocal compactness is equivalent to the existence of a cover $\{ \bU_\alpha \subseteq \V \mid \bU_\alpha \in (p\mathsf{Vec}_\Q)^\omega \}$ of $\V$ by compact sub-modules, and we use the two definitions interchangeably.

\begin{proposition}
    The class of locally compact persistence modules is closed under arbitrary direct sums. 
\end{proposition}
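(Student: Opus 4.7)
The plan is to argue pointwise via the finite-support property of direct sums. Let $\V = \bigoplus_{\alpha \in A} \V_\alpha$ with each $\V_\alpha$ locally compact, and fix an element $v \in \V$. By definition, $v$ belongs to some component $\V(t) = \bigoplus_{\alpha \in A} \V_\alpha(t)$ of $\V$ at an index $t \in \R_+$, so $v$ has only finitely many nonzero coordinates: there exist $\alpha_1, \dots, \alpha_n \in A$ and elements $v_{\alpha_i} \in \V_{\alpha_i}(t)$ such that $v = \sum_{i=1}^n v_{\alpha_i}$ under the canonical inclusions $\V_{\alpha_i} \hookrightarrow \V$.

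Next, I would invoke local compactness of each summand: for every $i$, choose a compact neighbourhood $\bU_i \subseteq \V_{\alpha_i}$ of $v_{\alpha_i}$. Composing with the canonical inclusions produces a submodule $\bU := \bigoplus_{i=1}^n \bU_i \subseteq \V$ which contains $v$. It remains only to observe that $\bU$ is compact.

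For the final observation, I would appeal to \Cref{compactpch}: each $\bU_i$ is tame and of finite type, and a finite direct sum of tame modules of finite type is again tame (the union of finitely many discretizations is a discretization of the sum) and of finite type. Hence $\bU$ is compact, providing the desired neighbourhood of $v$, and $\V$ is locally compact.

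There is essentially no obstacle here; the statement is a direct consequence of the fact that elements of a direct sum have finite support together with the closure of compactness under finite direct sums. The only point that deserves a line of justification is the last one, and it reduces immediately to \Cref{compactpch}.
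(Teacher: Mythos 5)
Your proof is correct and in substance the same as the paper's: the paper packages the argument via the equivalent ``compact cover'' formulation (combining compact covers of the summands into a compact cover of the sum), while you work pointwise using the finite support of an element of a direct sum and the closure of compact modules under finite direct sums --- two interchangeable phrasings, as the paper itself notes when introducing covers. The only step needing justification is the one you identify, that a finite direct sum of compact submodules is compact, and your reduction to \Cref{compactpch} (tame plus finite type is preserved by finite sums) settles it.
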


\begin{proof}
    If $\V = \oplus_\beta \V_\beta$ is a direct sum of locally compact persistence modules with compact covers $\oplus_{\alpha \in A_\beta} \bU_{\alpha, \beta} \twoheadrightarrow \V_\beta$, then 
    $$\bigoplus_\beta \bigoplus_{\alpha \in A_\beta} \bU_{\beta,\alpha} \twoheadrightarrow \bigoplus_\beta \V_\beta$$ is a compact cover for $\V$.
\end{proof}

There is a large, easily identifiable class of persistence modules that are not locally compact. An element $v_r \in \V(r)$ is a \textit{right-closed} point of $\V$ if $\V(r + \epsilon)(v_r) = 0$ for all $\epsilon > 0$.

\begin{lemma}
    If $\V \in p\mathsf{Vec}_\Q$ contains a right-closed point, then $\V$ is not locally compact.
\end{lemma}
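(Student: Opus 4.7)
The plan is to argue by contradiction: if a compact neighbourhood $\bU_{v_r} \subseteq \V$ of a right-closed point $v_r \in \V(r)$ existed, then the finite interval decomposition of $\bU_{v_r}$ would force $\V(r \leq r+\epsilon)(v_r) \neq 0$ for all sufficiently small $\epsilon > 0$, contradicting right-closedness.

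First I would unpack the definitions. Suppose for contradiction that $\V$ is locally compact and that $v_r \in \V(r)$ is right-closed. By hypothesis, there is a compact submodule $\bU_{v_r} \subseteq \V$ with $v_r \in \bU_{v_r}(r)$. Compactness, together with \Cref{compactpch}, means $\bU_{v_r}$ is tame and of finite type, and the interval decomposition theorem (\Cref{interval-decomposition}) yields a \emph{finite} decomposition
\[
\bU_{v_r} \;\cong\; \bigoplus_{i=1}^n \bI_{[s_i,t_i)}.
\]
Write $e_i^u$ for the canonical generator of $\bI_{[s_i,t_i)}$ at any time $u$ with $s_i \leq u < t_i$, and zero otherwise. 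Since $v_r \in \bU_{v_r}(r)$, we can expand $v_r = \sum_{i \in I} a_i\, e_i^r$ where $I = \{i : s_i \leq r < t_i,\ a_i \neq 0\}$.

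Next I would exploit the fact that intervals are half-open on the right, so $t_i > r$ strictly for every $i \in I$. Setting $\epsilon_0 = \tfrac{1}{2}\min_{i \in I}(t_i - r) > 0$ (a minimum over a finite set), every $i \in I$ satisfies $r + \epsilon_0 < t_i$, hence $e_i^{r+\epsilon_0} \neq 0$ in $\bI_{[s_i,t_i)}(r+\epsilon_0)$. As the components $\bI_{[s_i,t_i)}(r+\epsilon_0)$ sit as independent summands of $\bU_{v_r}(r+\epsilon_0)$, the structure map yields
\[
\bU_{v_r}(r \leq r+\epsilon_0)(v_r) \;=\; \sum_{i \in I} a_i\, e_i^{r+\epsilon_0} \;\neq\; 0.
\]
Finally, naturality of the inclusion $\bU_{v_r} \hookrightarrow \V$ forces $\V(r \leq r+\epsilon_0)(v_r) \neq 0$, contradicting the assumption that $v_r$ is right-closed.

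I do not expect any serious obstacle here; the only subtle point is confirming that the submodule structure map agrees with the ambient one on $v_r$, which is automatic from the naturality of the inclusion of persistence modules. The crucial geometric input is simply that the half-open right endpoints $t_i$ of a finite family all lie strictly above $r$, so there is uniform room to move forward without annihilating $v_r$.
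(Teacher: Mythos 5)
Your proof is correct and follows essentially the same route as the paper: identify a finite interval decomposition of the compact neighbourhood, push $v_r$ forward by an $\epsilon$ smaller than the distance to the nearest right endpoint, and derive a contradiction with right-closedness (equivalently, with injectivity of the inclusion). If anything, your choice of $\epsilon_0$ as half the minimum over only the intervals supporting $v_r$ is slightly more careful than the paper's minimum over all summands.
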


\begin{proof}
    Let $v_r \in \V(r)$ be a right-closed point. Suppose that there exists a compact neighbourhood $i : \bU_{v_r} = \oplus_{i=0}^n \bI_{[s_i,t_i)} \hookrightarrow \V$ containing $v_r = i(u_r)$. Pick $\epsilon < \min \{ t_i - r \mid 0 \leq i \leq n\}$. Then $v_{r+\epsilon} = i(u_{r+\epsilon}) = 0$ but $u_{r+\epsilon} \neq 0$, contradicting the injectivity of $i$.
\end{proof}

\begin{lemma} \label{pmod-preserving-lcocal-compactness}
    If $\V \in p\mathsf{Vec}_\Q$ is locally compact, then the pushout
    % https://q.uiver.app/#q=WzAsNCxbMCwwLCJcXGJpZ29wbHVzX3tcXGdhbW1hIFxcaW4gXFxHYW1tYX0gXFxiSV97W3RfXFxnYW1tYSxcXGluZnR5KX0gIl0sWzAsMiwiXFxiaWdvcGx1c197XFxnYW1tYSBcXGluIFxcR2FtbWF9IFxcYklfe1tzX1xcZ2FtbWEsXFxpbmZ0eSl9Il0sWzIsMCwiXFxWIl0sWzIsMiwiXFxvdmVybGluZXtcXFZ9Il0sWzAsMiwidl90Il0sWzEsMywiXFxnYW1tYV97c19cXGdhbW1hfSJdLFsyLDMsIiIsMSx7InN0eWxlIjp7InRhaWwiOnsibmFtZSI6Imhvb2siLCJzaWRlIjoidG9wIn19fV0sWzAsMSwiIiwxLHsic3R5bGUiOnsidGFpbCI6eyJuYW1lIjoiaG9vayIsInNpZGUiOiJ0b3AifX19XV0=
\[\begin{tikzcd}[sep=small]
	{\bigoplus_{\gamma \in \Gamma} \bI_{[t_\gamma,\infty)} } && \V \\
	\\
	{\bigoplus_{\gamma \in \Gamma} \bI_{[s_\gamma,\infty)}} && {\overline{\V}}
	\arrow["{v_{t_\gamma}}", from=1-1, to=1-3]
	\arrow["{\gamma_{s_\gamma}}", from=3-1, to=3-3]
	\arrow["i", hook, from=1-3, to=3-3]
	\arrow[hook, from=1-1, to=3-1]
\end{tikzcd}\] over an arbitrary indexing set $\Gamma$ is locally compact for all $\{ s_\gamma \leq t_\gamma \mid \gamma \in \Gamma\}$.
\end{lemma}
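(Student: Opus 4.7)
The plan is to show that every $\bar v\in\overline\V$ admits a compact neighbourhood in $\overline\V$, constructed as the image of a suitable sub-pushout built from a compact part of $\V$ and a finite subset of the indexing set $\Gamma$.

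I would first fix $r\geq 0$ and $\bar v\in\overline\V(r)$. Since pushouts in the abelian category $p\sVec$ are computed pointwise, $\bar v$ can be represented by a pair $(v,\sum_{\gamma\in F_0}\lambda_\gamma\gamma_r)\in \V(r)\oplus\bigoplus_{\gamma:s_\gamma\leq r}\Q$ with $F_0\subset\Gamma$ finite. Local compactness of $\V$ then supplies compact submodules containing $v$ and each $v_{t_\gamma}$ (for $\gamma\in F_0$ with $t_\gamma<\infty$); summing them inside $\V$ yields a single compact submodule $\bU\subseteq\V$ containing all of these distinguished elements.

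Next I would form the restricted pushout
$$
\begin{tikzcd}
\bigoplus_{\gamma\in F_0}\bI_{[t_\gamma,\infty)} \ar[r, "v_{t_\gamma}"] \ar[d, hook] & \bU \ar[d]\\
\bigoplus_{\gamma\in F_0}\bI_{[s_\gamma,\infty)} \ar[r] & \overline{\bU}.
\end{tikzcd}
$$
This is well-defined because $v_{t_\gamma}\in\bU$ by construction. As a finite colimit of compact objects, $\overline{\bU}$ is itself compact. The universal property of pushouts supplies a comparison morphism $\overline{\bU}\to\overline\V$ under which the class $[v,\sum_{\gamma\in F_0}\lambda_\gamma\gamma_r]\in\overline{\bU}(r)$ maps to $\bar v$, so its image in $\overline\V$ is a submodule containing $\bar v$, and is compact as a quotient of $\overline{\bU}$.

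The main obstacle I expect is verifying two closure properties of compactness in $p\sVec$: that finite sums of compact submodules of $\V$ remain compact (needed to produce $\bU$), and that quotients of compact persistence modules are compact (needed so that the image of $\overline{\bU}\to\overline\V$ is a compact submodule of $\overline\V$). Both reduce, via \Cref{compactpch}, to the assertion that tameness together with finite type is closed under passage to submodules and quotients. Finite type is immediate on both counts. Tameness of a submodule of a tame finite-type module follows from the observation that on each constant piece of a discretization of the ambient module, an increasing family of subspaces of a finite-dimensional vector space can jump only finitely many times, bounding the total number of break-points. Combining this with the image construction above then produces the desired compact neighbourhood of $\bar v$.
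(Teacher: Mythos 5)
Your overall strategy is essentially the paper's: restrict the pushout to a compact neighbourhood of the attaching data and finitely many cells, observe that the resulting small pushout is compact, and map it into $\overline{\V}$ to get a compact neighbourhood of the given element. (The paper does this one cell at a time, via the pointwise splitting $\overline{\V}(r)\cong \V(r)\oplus\Q\langle \gamma_r\rangle$ and a factorization of the pushout through a compact neighbourhood $\bU_{v_{t_\gamma}}$ of $v_{t_\gamma}$.) However, the way you discharge the compactness verifications contains a genuine error. By \Cref{compactpch}, compact means tame and of finite type, and tameness is \emph{not} closed under passage to submodules or quotients: the pointwise-injective map $\bI_{(1,\infty)}\to\bI_{[0,\infty)}$ (zero in degrees $r\le 1$, the identity for $r>1$) exhibits a non-tame submodule of a compact module, and the corresponding quotient $\bI_{[0,\infty)}/\bI_{(1,\infty)}\cong\bI_{[0,1]}$ is a non-tame quotient --- the paper itself notes that closed intervals are not tame. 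Your ``finitely many jumps'' argument fails exactly here: an increasing family of subspaces inside a constant piece can change immediately to the right of a point without the new (or dying) subspace being attached to any specific time, so finitely many jumps do not yield a discretization by half-open intervals. Consequently your final step, ``the image of $\overline{\bU}\to\overline{\V}$ is compact as a quotient of $\overline{\bU}$,'' is unsupported as written, and the auxiliary claim it rests on is false.

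The step can be repaired without any quotient-closure claim, and this is in effect what the paper does: compute both pushouts pointwise, so that $\overline{\bU}(r)\cong\bU(r)\oplus\Q\langle\gamma_r \mid \gamma\in F_0,\ s_\gamma\le r<t_\gamma\rangle$ and $\overline{\V}(r)\cong\V(r)\oplus\Q\langle\gamma_r \mid \gamma\in\Gamma,\ s_\gamma\le r<t_\gamma\rangle$, and check that the comparison map respects these splittings; it is then pointwise injective, so its image is isomorphic to $\overline{\bU}$, which is compact as a finite colimit of compacts. (In the paper this appears as the hooked factorization $\bU_{\gamma_{s_\gamma}}\hookrightarrow\overline{\V}$, with compactness of $\bU_{\gamma_{s_\gamma}}$ verified directly as tame and pointwise finite-dimensional.) One further caution: your preliminary step of replacing the compact neighbourhoods of $v$ and the $v_{t_\gamma}$ by their sum $\bU\subseteq\V$ uses that a finite union of compact submodules is compact; this is more delicate than it looks, because the intersection of two tame submodules need not be tame in general, and any justification should exploit local compactness of $\V$ (e.g.\ the absence of right-closed points). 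The paper's proof makes the same union assertion, so this is not a defect specific to your argument, but it deserves an explicit justification in both.
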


\begin{proof}
    Elements in $\overline{\V}$ consist of finite sums of homogenous elements
    $$ v_r \in \overline{\V}(r) = \V(r) \oplus (\overline{\V} / i\V)(r) = \V(r) \oplus \Q \langle \gamma_r \mid s_\gamma \leq r < t_\gamma \rangle ,$$ where we have used the fact that the exact sequence
    $0 \to \V \to \overline{\V} \to \overline{\V}/i\V \to 0$ splits pointwise at each $r \in \R_+$. Finite sums of locally compact objects are locally compact, since a finite union of compact neighbourhoods is compact. Since elements of $i(\V)$ admit compact neighborhoods by assumption, it remains only to check that the elements in each direct summand of $\overline{\V}/i\V(r) \cong \oplus_\gamma \bI_{[s_\gamma,t_\gamma)}(r)$ do as well. 
    
    We factorise the pushout diagram to get
    % https://q.uiver.app/#q=WzAsNyxbMCwwLCJcXGJJX3tbdF9cXGdhbW1hLFxcaW5mdHkpfSJdLFsyLDAsIlxcYlVfe3Zfe3RfXFxnYW1tYX19Il0sWzIsMiwiXFxiVV97XFxnYW1tYV97c19cXGdhbW1hfX0iXSxbMCwyLCJcXGJJX3tbc19cXGdhbW1hLFxcaW5mdHkpfSJdLFs0LDAsIlxcViJdLFs0LDIsIlxcVl9cXGdhbW1hIl0sWzYsMSwiXFxvdmVybGluZXtcXFZ9Il0sWzAsMSwidl97dF9cXGdhbW1hfSJdLFswLDMsIiIsMix7InN0eWxlIjp7InRhaWwiOnsibmFtZSI6Imhvb2siLCJzaWRlIjoidG9wIn19fV0sWzMsMiwiXFxnYW1tYV97c19cXGdhbW1hfSIsMl0sWzIsMCwiIiwxLHsic3R5bGUiOnsibmFtZSI6ImNvcm5lciJ9fV0sWzEsMiwiIiwwLHsic3R5bGUiOnsidGFpbCI6eyJuYW1lIjoiaG9vayIsInNpZGUiOiJ0b3AifX19XSxbMSw0LCIiLDEseyJzdHlsZSI6eyJ0YWlsIjp7Im5hbWUiOiJob29rIiwic2lkZSI6InRvcCJ9fX1dLFsyLDUsIiIsMSx7InN0eWxlIjp7InRhaWwiOnsibmFtZSI6Imhvb2siLCJzaWRlIjoidG9wIn19fV0sWzUsNiwiIiwxLHsic3R5bGUiOnsidGFpbCI6eyJuYW1lIjoiaG9vayIsInNpZGUiOiJ0b3AifX19XSxbNCw1LCIiLDEseyJzdHlsZSI6eyJ0YWlsIjp7Im5hbWUiOiJob29rIiwic2lkZSI6InRvcCJ9fX1dLFs0LDYsIiIsMSx7InN0eWxlIjp7InRhaWwiOnsibmFtZSI6Imhvb2siLCJzaWRlIjoidG9wIn19fV0sWzUsMSwiIiwxLHsic3R5bGUiOnsibmFtZSI6ImNvcm5lciJ9fV1d
\[\begin{tikzcd}[row sep=tiny, column sep = small]
	{\bI_{[t_\gamma,\infty)}} && {\bU_{v_{t_\gamma}}} && \V \\
	&&&&&& {\overline{\V}} \\
	{\bI_{[s_\gamma,\infty)}} && {\bU_{\gamma_{s_\gamma}}} && {\V_\gamma}
	\arrow["{v_{t_\gamma}}", from=1-1, to=1-3]
	\arrow[hook, from=1-1, to=3-1]
	\arrow["{\gamma_{s_\gamma}}"', from=3-1, to=3-3]
	\arrow["\lrcorner"{anchor=center, pos=0.125, rotate=180}, draw=none, from=3-3, to=1-1]
	\arrow[hook, from=1-3, to=3-3]
	\arrow[hook, from=1-3, to=1-5]
	\arrow[hook, from=3-3, to=3-5]
	\arrow[hook, from=3-5, to=2-7]
	\arrow[hook, from=1-5, to=3-5]
	\arrow[hook, from=1-5, to=2-7]
	\arrow["\lrcorner"{anchor=center, pos=0.125, rotate=180}, draw=none, from=3-5, to=1-3]
\end{tikzcd}\] where $\bU_{v_{t_\gamma}}$ is a compact neighbourhood of $v_{t_\gamma}$. The characterisation of compact persistence modules as tame and point-wise finite dimensional implies that if $\bU_{v_{t_\gamma}}$ is compact, then so is $\bU_{\gamma_{s_\gamma}}$. Since $\gamma_r \in \bU_{\gamma_{s_\gamma}}$ for all $s_\gamma \leq r < t_\gamma$, it follows that $\overline{\V}$ is locally compact. 
\end{proof}

\subsection{Cofibrant persistent complexes} We prove now that a persistent cochain complex is cofibrant only if it is locally compact, while while tameness suffices to prove compactness (\Cref{injectivity-compactness}). Local compactness is defined  for persistent cochain complexes as for persistence modules, in terms of the existence of compact neighbourhoods. 

\begin{definition}
    A persistent complex $\X \in p\Ch_\Q$ is \textit{locally compact} if for each $x \in \X$ there exists some compact sub-complex $\W_x \in (p\Ch_\Q)^\omega \subseteq \X$ such that $x \in \X$.
\end{definition}

This definition is equivalent to degree-wise local compactness at the level of persistence modules.

\begin{lemma} \label{local-compactness-complexes}
    A persistent cochain complex $\X$ is locally compact if and only if  $\X^k \in p\mathsf{Vec}_\Q$ is a locally compact persistence module for all $k \geq 0$.
\end{lemma}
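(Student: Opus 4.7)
My plan is to prove both implications by leveraging the earlier characterization that compact objects in $p\Ch_\Q$ are tame finite complexes and that compact objects in $p\mathsf{Vec}_\Q$ are tame persistence modules of finite type.

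For the forward direction, I would take $x \in \X^k$ and, viewing $x$ as an element of $\X$, invoke local compactness of $\X$ to obtain a compact sub-complex $\W_x \subseteq \X$ with $x \in \W_x$. Since $\W_x$ is a tame finite persistent cochain complex, its degree-$k$ component $\W_x^k$ is pointwise finite-dimensional (inherited from finiteness of $\W_x(t)$ in each degree) and tame with the same discretization as $\W_x$. Hence $\W_x^k$ is a compact persistence module containing $x$ and sitting inside $\X^k$, proving $\X^k$ is locally compact.

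For the backward direction, given $x \in \X^k$, I would use local compactness of $\X^k$ to choose a compact neighborhood $\bU_x \subseteq \X^k$. The key construction is to define a sub-complex $\W_x \subseteq \X$ concentrated in degrees $k$ and $k+1$ by
\[
(\W_x)^k = \bU_x, \qquad (\W_x)^{k+1} = d(\bU_x), \qquad (\W_x)^j = 0 \text{ for } j \notin \{k,k+1\},
\]
with differential inherited from $\X$. By construction $\W_x$ is closed under $d$, so it is a sub-cochain complex containing $x$. It is bounded in degree, so to conclude compactness it suffices to verify that both $\bU_x$ and $d(\bU_x)$ are compact persistence modules. Since $d(\bU_x)$ is the image of a morphism of persistence modules with compact domain, it is a quotient of $\bU_x$, and both tameness and pointwise finite-dimensionality are preserved under quotients; hence $d(\bU_x)$ is compact.

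I expect the main (minor) obstacle to be the verification that quotients of compact persistence modules remain compact, which underpins compactness of $d(\bU_x)$. This follows from the characterization in \Cref{compactpch}: tameness passes to quotients since the structure maps of a quotient are isomorphisms wherever the original ones are, and finite type is obviously preserved. With this observation in hand, the construction yields a tame finite complex, hence a compact sub-complex of $\X$, completing the proof.
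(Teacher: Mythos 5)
Your forward direction is fine and is exactly the paper's argument (restrict the compact neighbourhood complex to degree $k$). The gap is in the backward direction, at the step asserting that $d(\bU_x)$ is compact because ``quotients of compact persistence modules are compact.'' That claim is false: tameness is \emph{not} preserved by quotients by arbitrary submodules. For instance $\bI_{(1,\infty)}\subseteq\bI_{[0,\infty)}$ gives the quotient $\bI_{[0,1]}$, which is not tame; the structure map of $\bI_{[0,\infty)}$ from time $1$ to time $2$ is an isomorphism while the induced map on the quotient is $\Q\to 0$, so your justification (``the structure maps of a quotient are isomorphisms wherever the original ones are'') breaks precisely on injectivity. Here the relevant kernel, $\Ker(d)\cap\bU_x$, need not be tame, so the valid principle ``quotients of tame by \emph{tame} subobjects are tame'' does not apply. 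The failure is not merely hypothetical for your construction: take $\X$ with $\X^k=\bI_{[0,\infty)}$, $\X^{k+1}=\bI_{[0,1]}$ and $d$ sending generator to generator; then $\bU_x=\X^k$ is compact by \Cref{compactpch}, but $d(\bU_x)=\bI_{[0,1]}$ is not. This $\X$ of course violates the lemma's hypothesis ($\X^{k+1}$ is not locally compact), but that is exactly the point: your backward argument only ever invokes local compactness in degree $k$, and the conclusion genuinely needs degree $k+1$, so the argument cannot be complete as written.

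The paper avoids this by using local compactness of $\X^{k+1}$ explicitly: it decomposes $\bU_x=\bigoplus_{i=0}^n\bI_{[s_i,t_i)}$, picks generators $x'_{s_i}$, chooses compact neighbourhoods $\bU_{dx'_{s_i}}\subseteq\X^{k+1}$ of the finitely many elements $dx'_{s_i}$, and builds the compact subcomplex from these by a finite gluing, using that finite unions of compact submodules are compact. Your construction can be repaired along similar lines, but only by invoking degree $k+1$: since a locally compact persistence module has no right-closed points, and since away from the finitely many endpoints $s_i,t_i$ the structure maps of $d(\bU_x)$ are surjective with pointwise dimension at most $n+1$, any failure of tameness of $d(\bU_x)$ would produce an element of $d(\bU_x)\subseteq\X^{k+1}$ that is nonzero at some time and dies immediately afterwards, i.e.\ a right-closed point of $\X^{k+1}$, contradicting its local compactness. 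Some argument of this kind (or the paper's neighbourhood-of-$dx'_{s_i}$ construction) is the missing ingredient.
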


\begin{proof}
    If $\X$ is locally compact, then the restriction of the compact neighbourhood $\W_x \in p\Ch_\Q$ to degree $k$ is a compact neighbourhood $\W^k_x \in p\mathsf{Vec}_\Q$. On the other hand, suppose that $\X^k$ is locally compact for all $k \geq 0$, and consider the interval decomposition $\bU_x = \bigoplus_{i=0}^n \bI_{[s_i,t_i)}$ of the compact neighbourhood $\bU_x$ of $x$. Pick generators $x'_{s_i}$ for each interval $\bI_{[s_i,t_i)}$, and let $\bU_{dx'_{s_i}}$ be a compact neighbourhood of each $dx'_{s_i}$. The persistence complex $\W_x$ defined by the pushout
    % https://q.uiver.app/#q=WzAsNSxbMCwwLCJcXGJpZ29wbHVzX3tpPTB9Xm4gXFxTcF57aysxfV97W3NfaSx0X2kpfSJdLFswLDIsIlxcYmlnb3BsdXNfe2k9MH1ebiBcXERpXntrKzF9X3tbc19pLHRfaSl9Il0sWzIsMCwiXFxiaWdvcGx1c197aT0wfV5uIFxcYlVfe2R4X3tzX2l9fSJdLFsyLDIsIlxcV194Il0sWzMsMSwiXFxYIl0sWzAsMSwiIiwwLHsic3R5bGUiOnsidGFpbCI6eyJuYW1lIjoiaG9vayIsInNpZGUiOiJ0b3AifX19XSxbMCwyLCIoZHhfe3NfaX0sMCkiXSxbMSwzXSxbMiw0XSxbMiwzLCIiLDIseyJzdHlsZSI6eyJ0YWlsIjp7Im5hbWUiOiJob29rIiwic2lkZSI6InRvcCJ9fX1dLFszLDRdLFszLDAsIiIsMix7InN0eWxlIjp7Im5hbWUiOiJjb3JuZXIifX1dXQ==
\[\begin{tikzcd}[sep=small]
	{\bigoplus_{i=0}^n \Sp^{k+1}_{[s_i,t_i)}} && {\bigcup_{i=0}^n \bU_{dx'_{s_i}}} \\
	&&& \X, \\
	{\bigoplus_{i=0}^n \Di^{k+1}_{[s_i,t_i)}} && {\W_x}
	\arrow[hook, from=1-1, to=3-1]
	\arrow["{(dx_{s_i},0)}", from=1-1, to=1-3]
	\arrow[from=3-1, to=3-3]
	\arrow[hook, from=1-3, to=2-4]
	\arrow[hook, from=1-3, to=3-3]
	\arrow[hook,from=3-3, to=2-4]
	\arrow["\lrcorner"{anchor=center, pos=0.125, rotate=180}, draw=none, from=3-3, to=1-1]
\end{tikzcd}\] which is a compact neighbourhood of $x$ in $p\Ch_\Q$, since compactness is preserved under finite unions.
\end{proof}

\begin{comment}
\begin{lemma}
    If $\Y \in (p)^\omega$ and $0 \to \X \to \Y \to \bZ \to 0$ is an exact sequence then $\X,\bZ \in (p)^\omega$.
\end{lemma}

\begin{proof}
    We know that compact is equivalent to degree-wise compact, so it suffices to show that the condition holds on the induced exact sequence
    $$0 \to \X^k \to \Y^k \to \Z^k \to 0$$ of $p$-modules for each $k$. Compactness of $p$-modules is characterised by tame and point-wise finite dimensional. 
\end{proof}
\end{comment}

\begin{lemma} \label{pushout-local-compactness-complexes}
    If $\X \in p\Ch_\Q$ is locally compact in degrees $k > 0$, then the pushout
    % https://q.uiver.app/#q=WzAsNCxbMCwwLCJcXGJpZ29wbHVzX3tcXGdhbW1hIFxcaW4gXFxHYW1tYX0gXFxTcF57a19cXGdhbW1hKzF9X3tbc19cXGdhbW1hLHRfXFxnYW1tYSl9Il0sWzAsMiwiXFxiaWdvcGx1c197XFxnYW1tYSBcXGluIFxcR2FtbWF9IFxcRGlee2tfXFxnYW1tYSsxfV97c19cXGdhbW1hfSJdLFsyLDAsIlxcWCJdLFsyLDIsIlxcWSJdLFswLDFdLFswLDJdLFsxLDNdLFsyLDNdLFszLDAsIiIsMSx7InN0eWxlIjp7Im5hbWUiOiJjb3JuZXIifX1dXQ==
\[\begin{tikzcd}[sep=small]
	{\bigoplus_{\gamma \in \Gamma} \Sp^{k_\gamma+1}_{[s_\gamma,t_\gamma)}} && \X \\
	\\
	{\bigoplus_{\gamma \in \Gamma} \Di^{k_\gamma+1}_{s_\gamma}} && \Y
	\arrow[from=1-1, to=3-1]
	\arrow[from=1-1, to=1-3]
	\arrow[from=3-1, to=3-3]
	\arrow[from=1-3, to=3-3]
	\arrow["\lrcorner"{anchor=center, pos=0.125, rotate=180}, draw=none, from=3-3, to=1-1]
\end{tikzcd}\] is locally compact in degrees $k > 0$.
\end{lemma}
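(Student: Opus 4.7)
The plan is to reduce the lemma to \Cref{pmod-preserving-lcocal-compactness} by computing the pushout one cohomological degree at a time. Since colimits in $p\Ch_\Q$ are computed pointwise in $\R_+$ and colimits in $\Ch_\Q$ are computed degreewise, $\Y^k$ is simply the pushout of persistence modules obtained by evaluating the given span in degree $k$. Moreover, the argument of \Cref{local-compactness-complexes} applies one degree at a time, so it suffices to check that $\Y^k \in p\sVec$ is locally compact for every $k > 0$.

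Fix such a $k$ and partition the indexing set as $\Gamma = \Gamma_{k-1} \sqcup \Gamma_k \sqcup \Gamma'$, where $\Gamma_j := \{\gamma \in \Gamma \mid k_\gamma = j\}$. Unpacking \Cref{spheres}, the degree-$k$ component of $\Sp^{k_\gamma+1}_{[s_\gamma,t_\gamma)}$ is $\bI_{[s_\gamma,\infty)}$ for $\gamma \in \Gamma_{k-1}$, is $\bI_{[t_\gamma,\infty)}$ for $\gamma \in \Gamma_k$, and vanishes otherwise; similarly the degree-$k$ component of $\Di^{k_\gamma+1}_{s_\gamma}$ is $\bI_{[s_\gamma,\infty)}$ when $\gamma \in \Gamma_{k-1}\cup\Gamma_k$ and vanishes otherwise. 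The sphere-to-disk map in degree $k$ acts as the identity on $\Gamma_{k-1}$-summands and as the canonical inclusion $\bI_{[t_\gamma,\infty)} \hookrightarrow \bI_{[s_\gamma,\infty)}$ on $\Gamma_k$-summands.

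Because pushouts along identity maps are trivial, the $\Gamma_{k-1}$-summands can be deleted from the top-left and bottom-left corners of the pushout square without changing $\Y^k$. What remains is precisely the pushout of persistence modules
$$
\X^k \longleftarrow \bigoplus_{\gamma \in \Gamma_k} \bI_{[t_\gamma,\infty)} \hookrightarrow \bigoplus_{\gamma \in \Gamma_k} \bI_{[s_\gamma,\infty)},
$$
which has the exact shape handled by \Cref{pmod-preserving-lcocal-compactness}. Since $k > 0$, the hypothesis guarantees that $\X^k$ is locally compact, so that lemma immediately implies $\Y^k$ is locally compact, concluding the argument.

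The main obstacle is the degreewise bookkeeping: partitioning the attaching indices by their cohomological location and verifying that the identity-map contributions from $\Gamma_{k-1}$ can be cancelled, together with the careful identification of the resulting reduced pushout with the template of \Cref{pmod-preserving-lcocal-compactness}. Once this is done, the substantive work has already been carried out in \Cref{pmod-preserving-lcocal-compactness} and \Cref{local-compactness-complexes}.
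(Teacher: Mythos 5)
Your proof is correct and follows essentially the same route as the paper: reduce to degreewise local compactness via \Cref{local-compactness-complexes}, identify the degree-$k$ component of the pushout as a pushout of persistence modules along $\bigoplus \bI_{[t_\gamma,\infty)} \hookrightarrow \bigoplus \bI_{[s_\gamma,\infty)}$, and invoke \Cref{pmod-preserving-lcocal-compactness}. The only difference is that you spell out the bookkeeping (partitioning $\Gamma$ by $k_\gamma$ and cancelling the identity contributions from $\Gamma_{k-1}$) that the paper's proof leaves implicit.
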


\begin{proof}
    By \Cref{local-compactness-complexes} it suffices to check local compactness degree-wise. With our assumptions, this reduces to checking that local compactness is preserved under the following pushout
    % https://q.uiver.app/#q=WzAsNCxbMCwwLCJcXGJpZ29wbHVzX3tcXGdhbW1hIFxcaW4gXFxHYW1tYV5rfSBcXGJJX3tbdF9cXGdhbW1hLFxcaW5mdHkpfV57a19cXGdhbW1hfSJdLFswLDIsIlxcYmlnb3BsdXNfe1xcZ2FtbWEgXFxpbiBcXEdhbW1hXmt9IFxcYklfe1tzX1xcZ2FtbWEsXFxpbmZ0eSl9XmsiXSxbMiwwLCJcXFgiXSxbMiwyLCJcXFkiXSxbMCwyXSxbMSwzXSxbMiwzLCIiLDEseyJzdHlsZSI6eyJ0YWlsIjp7Im5hbWUiOiJob29rIiwic2lkZSI6InRvcCJ9fX1dLFswLDEsIiIsMSx7InN0eWxlIjp7InRhaWwiOnsibmFtZSI6Imhvb2siLCJzaWRlIjoidG9wIn19fV0sWzMsMCwiIiwwLHsic3R5bGUiOnsibmFtZSI6ImNvcm5lciJ9fV1d
\[\begin{tikzcd}[sep=small]
	{\bigoplus_{\gamma \in \Gamma^k} \bI_{[t_\gamma,\infty)}^{k_\gamma}} && \X \\
	\\
	{\bigoplus_{\gamma \in \Gamma^k} \bI_{[s_\gamma,\infty)}^{k_\gamma}} && \Y,
	\arrow[from=1-1, to=1-3]
	\arrow[from=3-1, to=3-3]
	\arrow[hook, from=1-3, to=3-3]
	\arrow[hook, from=1-1, to=3-1]
	\arrow["\lrcorner"{anchor=center, pos=0.125, rotate=180}, draw=none, from=3-3, to=1-1]
\end{tikzcd}\]
    which follows from \Cref{pmod-preserving-lcocal-compactness}. 
\end{proof}

\begin{lemma} \label{local-compact-implies-Icell}
    If $\X$ is an $\bI$-cell complex, then $\X$ is locally compact in degrees $k > 0$.
\end{lemma}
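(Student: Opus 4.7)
The plan is to exploit the sequential presentation of $\bI$-cell complexes guaranteed by compact generation. By \Cref{cofib-gen}, an $\bI$-cell complex $\X$ can be written as a colimit $\X = \colim_{n < \omega} \X_n$ where $\X_0 = 0$ and each $\X_{n+1}$ is obtained as a pushout
\[\begin{tikzcd}[sep=small]
\bigoplus_{\gamma \in \Gamma_n} \Sp^{k_\gamma+1}_{[s_\gamma, t_\gamma)} \ar[r] \ar[d] & \X_n \ar[d] \\
\bigoplus_{\gamma \in \Gamma_n} \Di^{k_\gamma+1}_{s_\gamma} \ar[r] & \X_{n+1}
\end{tikzcd}\]
with each $k_\gamma \geq 0$ and $t_\gamma$ possibly infinite.

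I would first verify by induction on $n$ that every $\X_n$ is locally compact in degrees $> 0$. The base case $\X_0 = 0$ is vacuous, since there are no elements to equip with neighborhoods. The inductive step is precisely the content of \Cref{pushout-local-compactness-complexes}, applied to the pushout diagram defining $\X_{n+1}$ from $\X_n$.

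I would then transport this local compactness to $\X = \colim_n \X_n$. The key observation is that each cell attachment map $\Sp^{k_\gamma+1}_{[s_\gamma, t_\gamma)} \to \Di^{k_\gamma+1}_{s_\gamma}$ is a degreewise monomorphism: the source vanishes in degree $k_\gamma$, while in degree $k_\gamma+1$ the map is the identity inclusion $\bI_{[s_\gamma, t_\gamma)} \hookrightarrow \bI_{[s_\gamma, \infty)}$. Since pushouts of monomorphisms are monomorphisms in the abelian category $p\Ch_\Q$, each transition $\X_n \hookrightarrow \X_{n+1}$ is a monomorphism, and because colimits in $p\Ch_\Q$ are computed pointwise and filtered colimits of monomorphisms are monomorphisms, the canonical map $\X_n \hookrightarrow \X$ is also a monomorphism for every $n$. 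Now given $x \in \X^k$ with $k > 0$, pointwise computation of the colimit provides some $n$ and some $x_n \in \X_n^k$ mapping to $x$, so the compact neighborhood $\W_{x_n} \subseteq \X_n$ produced by the inductive hypothesis embeds via $\X_n \hookrightarrow \X$ as a compact sub-complex of $\X$ containing $x$.

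The main obstacle I anticipate is not the induction itself, which is routine once one has \Cref{pushout-local-compactness-complexes} in hand, but rather the colimit passage: one must ensure that compact neighborhoods produced at finite stages embed faithfully into $\X$ as honest subobjects containing the prescribed element. The degreewise injectivity of the cell attachment maps in positive degrees, together with exactness of filtered colimits, is precisely what secures this bridge between the inductive hypothesis and the conclusion at the colimit.
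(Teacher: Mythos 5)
Your proof is correct and takes essentially the same route as the paper's: induct along the sequential presentation of the cell complex using \Cref{pushout-local-compactness-complexes}, then observe that every element of $\X$ appears at some finite stage, whose compact neighbourhood embeds into $\X$. (One small slip: $\Sp^{k_\gamma+1}_{[s_\gamma,t_\gamma)}$ has $\bI_{[s_\gamma,\infty)}$ in degree $k_\gamma+1$ and $\bI_{[t_\gamma,\infty)}$ in degree $k_\gamma$, not the components you describe, but the attaching maps are still degreewise monomorphisms, so your justification that each $\X_n$ is a genuine subobject of $\X$ --- a point the paper leaves implicit in writing $\X=\bigcup_i \X_i$ --- goes through.)
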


\begin{proof}
    Let $\X$ be a sequential colimit, $\X = \colim_i \X_i$, built from pushouts
    % https://q.uiver.app/#q=WzAsNCxbMCwwLCJcXGJpZ29wbHVzX3tcXGdhbW1hIFxcaW4gXFxHYW1tYV9pfSBcXFNwXntrX1xcZ2FtbWErMX1fe1tzX1xcZ2FtbWEsdF9cXGdhbW1hKX0iXSxbMCwyLCJcXGJpZ29wbHVzX3tcXGdhbW1hIFxcaW4gXFxHYW1tYV9pfSBcXERpXntrX1xcZ2FtbWErMX1fe3NfXFxnYW1tYX0iXSxbMiwwLCJcXFhfaSJdLFsyLDIsIlxcWF97aSsxfSJdLFswLDFdLFswLDJdLFsxLDNdLFsyLDNdLFszLDAsIiIsMSx7InN0eWxlIjp7Im5hbWUiOiJjb3JuZXIifX1dXQ==
\[\begin{tikzcd}[sep=small]
	{\bigoplus_{\gamma \in \Gamma_i} \Sp^{k_\gamma+1}_{[s_\gamma,t_\gamma)}} && {\X_i} \\
	\\
	{\bigoplus_{\gamma \in \Gamma_i} \Di^{k_\gamma+1}_{s_\gamma}} && {\X_{i+1}}
	\arrow[hook,from=1-1, to=3-1]
	\arrow[from=1-1, to=1-3]
	\arrow[from=3-1, to=3-3]
	\arrow[hook,from=1-3, to=3-3]
	\arrow["\lrcorner"{anchor=center, pos=0.125, rotate=180}, draw=none, from=3-3, to=1-1]
\end{tikzcd}\] with $k_\gamma \geq 1$ for all $i \in \N$, i.e.,  $\X= \bigcup_{i=0}^\infty \X_i$. By \Cref{pushout-local-compactness-complexes}, each $\X_i$ is locally compact by induction. Since each element $x \in \X$ must live in $\X_i$ for some $i$, it admits a compact neighbourhood $$\W_x \hookrightarrow \X_i \hookrightarrow \X,$$ showing that $\X$ is locally compact.
\end{proof}

\begin{lemma} \label{right-closed}
    If $\Y \in p\Ch_\Q$ and $\Y^k$ contains a right-closed $y(s) \in \Y^k(s)$, then $\Y$ is not cofibrant.
\end{lemma}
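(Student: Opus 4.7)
The plan is to argue by contradiction, combining three previously established results: the compact generation of the interval-sphere model structure (\Cref{cofib-gen}), which presents every cofibrant object as a retract of a sequential $\bI$-cell complex; \Cref{local-compact-implies-Icell}, which ensures that any such cell complex is locally compact in each positive degree; and the lemma immediately preceding \Cref{pmod-preserving-lcocal-compactness}, which forbids a locally compact persistence module from carrying a nonzero right-closed point.

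Assuming $\Y$ is cofibrant, I would first fix a retract presentation $\Y \xrightarrow{i} \X \xrightarrow{r} \Y$ with $r \circ i = \id_\Y$ and $\X$ a sequential $\bI$-cell complex. As a split monomorphism in the functor category $p\Ch_\Q$, the map $i$ is componentwise (hence degreewise in each index) injective.

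Next I would verify that right-closedness transfers along $i$: naturality gives, for every $\epsilon > 0$,
\[
  \X^k(s \le s+\epsilon)\bigl(i(y(s))\bigr) \;=\; i\bigl(\Y^k(s \le s+\epsilon)(y(s))\bigr) \;=\; i(0) \;=\; 0,
\]
and nonzeroness of $y(s)$ transports through injectivity of $i$ to nonzeroness of $i(y(s))$. For $k \ge 1$ this contradicts local compactness of $\X^k$ (provided by \Cref{local-compact-implies-Icell} together with the obstruction lemma), finishing the argument.

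The main obstacle is the degree $k = 0$ case, which \Cref{local-compact-implies-Icell} does not cover: in degree $0$ the generating cofibrations $\Sp^0_{[s,t)} \to \Di^0_s = 0$ act as quotients rather than attachments. To deal with it I would augment \Cref{local-compact-implies-Icell} by observing that contributions to $\X^0$ come from $\Sp^1$-attachments (which add $\bI_{[s,\infty)}$-summands, none of them right-closed), while $\Sp^0$-attachments collapse submodules of the form $\bigoplus \bI_{[s,t)}$; neither operation introduces a right-closed point, so the same contradiction carries through. Alternatively, one may restrict the statement implicitly to $k \ge 1$, where the core three-step argument suffices.
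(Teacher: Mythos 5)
Your core argument is exactly the paper's: present the cofibrant object $\Y$ as a retract $\Y\xrightarrow{i}\X\xrightarrow{r}\Y$ of an $\bI$-cell complex, invoke \Cref{local-compact-implies-Icell} for local compactness of $\X$ in positive degrees, push the nonzero right-closed point forward along the (split mono, hence injective) map $i$, and contradict the lemma asserting that a locally compact persistence module has no nonzero right-closed point. The paper compresses the naturality/injectivity bookkeeping into one line; your spelled-out version of it is fine, and for $k\ge 1$ the argument is complete.

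Where you diverge is the degree-$0$ discussion, and there your first patch does not work: $\Sp^0$-attachments \emph{can} create right-closed points in degree $0$. Since $\Di^0_s=0$ by convention, the generating cofibration $\Sp^0_{[s,\infty)}\to\Di^0_s=0$ acts by collapsing, and pushing it out along the map picking the generator $1_s$ of $\bI_{[0,\infty)}\otimes S^0$ (itself cofibrant, being the pushout of $0\leftarrow \Sp^1_{[0,\infty)}\to \Di^1_0$) yields $\bI_{[0,s)}\otimes S^0$. This is an $\bI$-cell complex concentrated in degree $0$ all of whose nonzero elements are right-closed, so the statement is genuinely false for $k=0$, and no bookkeeping of which attachments touch degree $0$ can rescue it. Your fallback — restrict the lemma to $k\ge 1$, the regime where \Cref{local-compact-implies-Icell} applies — is therefore not merely an option but the correct resolution; note that the paper's own one-line proof silently has the same gap, and its later use of the lemma (\Cref{non-injective-model-example}) only needs $k>0$, so nothing downstream is affected.
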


\begin{proof}
    An object $\Y$ is cofibrant if and only if it is a retract $i : \Y \hookrightarrow \X$ of an $\bI$-cell complex $\X$, which is locally compact by \Cref{local-compact-implies-Icell}. The image $i(y_s) \neq 0$ of $y_s$ is a right-closed point of $\X^k$, which contradicts the local compactness of $\X$ by \Cref{right-closed}.
\end{proof}

\begin{example}[Non-equivalence with the injective model structure] \label{non-injective-model-example}
    \rm{For $s \leq t$, the closed interval $\bI_{[s,t]} \otimes S^k$ for $k > 0$ is \textit{not} cofibrant, since it has a right-closed point at $t$. Since all objects are cofibrant in the usual model structure on $\Ch_\Q$, all objects must be cofibrant in the injective model structure on $p\Ch_\Q$, demonstrating non-equivalence with our model structure. }
\end{example}

\begin{remark} 
    Since there are no non-trivial maps $\bI_{[p,q)} \hookrightarrow \bI_{[s,t]}$ for $s\leq p<q\leq t$, the closed interval $\bI_{[s,t]}$ contains no non-trivial compact sub-modules and thus fails (spectacularly) to be locally compact. 
\end{remark}

Given that cofibrant implies locally compact (above degree 0), a natural question is whether the converse is true. Over the tame objects, which are a sub-class of locally compact persistence complexes, the converse holds. 

\begin{lemma} \label{injectivity-compactness}
    If $i : \X \to \Y$ is an injective map of tame $\X,\Y \in (p\Ch_\Q)^{tame}$, then $i$ is a cofibration in the interval sphere model structure. 
\end{lemma}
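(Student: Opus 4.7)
The plan is to verify that $i : \X \to \Y$ has the left lifting property against every trivial fibration $p : E \to B$ in the interval-sphere model structure. By \Cref{Iinj_JinjW}, combined with \Cref{cube} and \Cref{qiso}, any such $p$ is pointwise surjective with acyclic kernel.

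First, I exploit tameness to decompose the cokernel. Since $\X$ and $\Y$ are tame, so is $\bZ := \Y/\X$ (via a common discretization), and by \Cref{interval-decomposition}, each cochain degree admits an interval decomposition $\bZ^k = \bigoplus_{\gamma \in \Gamma_k} \bI_{[s^k_\gamma, t^k_\gamma)}$. Choose lifts $y^k_\gamma \in \Y^k(s^k_\gamma)$ of the interval generators. Two key properties follow: the death property $y^k_\gamma(t^k_\gamma) \in \X^k(t^k_\gamma)$ (since $z^k_\gamma$ dies at $t^k_\gamma$), and the coboundary decomposition $dy^k_\gamma = \xi_\gamma + \sum_\delta c_{\gamma,\delta}\, y^{k+1}_\delta(s^k_\gamma)$ with $\xi_\gamma \in \X^{k+1}(s^k_\gamma)$ and only finitely many nonzero $c_{\gamma,\delta}$.

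Given a lifting square with maps $f : \X \to E$ and $g : \Y \to B$, a lift $\tilde g$ is determined by its values $\eta^k_\gamma := \tilde g(y^k_\gamma) \in E^k(s^k_\gamma)$, subject to three constraints: (i) $p(\eta^k_\gamma) = g(y^k_\gamma)$; (ii) $E(s^k_\gamma \le t^k_\gamma)(\eta^k_\gamma) = f(y^k_\gamma(t^k_\gamma))$; (iii) $d\eta^k_\gamma = f(\xi_\gamma) + \sum_\delta c_{\gamma,\delta}\, E(s^{k+1}_\delta \le s^k_\gamma)(\eta^{k+1}_\delta)$. Together, these constraints are the data of a lifting problem against the single cell $\Sp^{k+1}_{[s^k_\gamma, t^k_\gamma)} \hookrightarrow \Di^{k+1}_{s^k_\gamma}$ in $\I$, which admits a solution because $p \in \Inj(\I)$, provided the $\eta^{k+1}_\delta$'s appearing in (iii) have been defined.

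The main obstacle is coherence in unbounded cochain degree: constraint (iii) creates a top-down dependency with no natural base case. My approach is to first choose initial lifts satisfying only (i) and (ii), solvable $\gamma$-by-$\gamma$ as a lifting problem against the trivial cofibration $\Di^{k+1}_{t^k_\gamma} \hookrightarrow \Di^{k+1}_{s^k_\gamma}$ in $\J_0$, against which $p$ has the right lifting property by \Cref{JcofIcof}. The failure of (iii) is then an error $e^k_\gamma := d\eta^k_\gamma - f(\xi_\gamma) - \sum_\delta c_{\gamma,\delta}\, \eta^{k+1}_\delta(s^k_\gamma)$ lying in $\ker p$; these errors form a cochain-complex-like system inherited from $d^2 = 0$ in $\Y$, linked across degrees by the coefficients $c_{\gamma,\delta}$. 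Correcting each $\eta^k_\gamma$ by a suitable primitive of $e^k_\gamma$ using the acyclicity of $\ker p$ yields a coherent global lift $\tilde g$. This obstruction-theoretic strategy parallels the proof of \Cref{Iinj_JinjW} and exploits the finitary nature of the coefficient pattern $c_{\gamma,\delta}$ to bypass transfinite induction on cochain degree.
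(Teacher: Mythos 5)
Your overall strategy -- verifying the left lifting property of $i$ against every $\I$-injective directly, after splitting $\Y$ pointwise as $\X$ plus chosen lifts of interval generators of $\Y/\X$ -- is legitimate in principle, and your reduction of the problem to constraints (i)--(iii) on the elements $\eta^k_\gamma$ is essentially correct. The genuine gap is in the final step, which is precisely the crux: you acknowledge that constraint (iii) couples the degree-$k$ lifts to the degree-$(k+1)$ lifts with no base case, but the proposed fix is not an argument. First, solving (iii) for a fixed $\gamma$ as a lifting problem against $\Sp^{k+1}_{[s_\gamma,t_\gamma)}\hookrightarrow\Di^{k+1}_{s_\gamma}$ requires the right-hand side $f(\xi_\gamma)+\sum_\delta c_{\gamma,\delta}E(\cdot)(\eta^{k+1}_\delta)$ to be a cocycle, which already presupposes that the degree-$(k+1)$ lifts satisfy \emph{their} constraint (iii); so the dependency is not merely an inconvenience, it blocks even the one-cell lifting step. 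Second, the correction scheme is not carried out: the errors $e^k_\gamma$ are not individually cocycles in $\ker p$ (from $d^2=0$ they satisfy coupled relations of the form $de^k_\gamma=-\sum_\delta c_{\gamma,\delta}e^{k+1}_\delta(\cdot)+\cdots$, with extra terms coming from generators that have died by time $s_\gamma$), the primitives you add must moreover vanish under $E(s_\gamma\le t_\gamma)$ to preserve (ii) (acyclicity of $\ker p$ alone does not give this; you would need the $\J$-injectivity of $\K\to 0$ as in the paper's proof of \Cref{Iinj_JinjW}), and correcting the $\eta^{k+1}_\delta$ changes the degree-$k$ errors, so the ripple goes downward through all degrees while the dependency chains go upward without bound. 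The remark that the coefficient pattern $c_{\gamma,\delta}$ is finitary in each constraint does not give a base case or a termination/convergence argument, so the coherence of the global lift is simply not established. (Also a minor point: the RLP of $p$ against $\J_0$ is \Cref{IJinj}, not \Cref{JcofIcof}.)

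For comparison, the paper avoids the lifting calculus altogether and instead exhibits $i$ as an explicit relative $\I$-cell complex in two stages: first attach cells along $(0,y_{t_\gamma})$ indexed by an interval decomposition of $Z\Y/i(Z\X)$, producing $i(\X)\cup Z\Y$, and then attach cells along $(dy_{s_\delta},y_{t_\delta})$ indexed by an interval decomposition of $\Y/(i(\X)\cup Z\Y)$. The key organizational idea is to adjoin \emph{all cocycles of $\Y$ first}: the attaching maps in the first stage have trivial cocycle component, and in the second stage the differentials $dy_{s_\delta}$ already lie in the previously constructed subobject, so the degree-indexed dependency you ran into never arises. Your argument could be repaired along the same lines -- choose the generators of $\Y/\X$ so that those coming from $Z\Y$ are handled first, after which every remaining generator has $d\eta$ expressible in terms of already-fixed data -- but as written the proof is incomplete at its decisive step.
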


\begin{proof}
    Tame persistence modules form an abelian category \cite{giunti_2021}, so the space of cocycles $Z\X = \Ker(d : \X \to \X[1])$ is tame. The map reduces to an injective map $i : Z\X \to Z\Y$ on cocycles. Since $Z\X$ and $Z\Y$ are tame, the quotient is tame and thus has a graded interval decompostion $Z\Y/i(Z\X) = \oplus_\gamma \bI_{[s_\gamma,t_\gamma)}^{k_\gamma}$ at the level of graded persistence modules. For each interval pick generators $y_{s_\gamma} \in Z\Y/i(Z\X)(s_\gamma)$ and note that $y_{t_\gamma}$ must be in $i(Z\X)$ by the interval decomposition. We add missing cocycles into $i(\X)$ via the pushout
    % https://q.uiver.app/#q=WzAsNCxbMCwwLCJcXGJpZ29wbHVzX3tcXGdhbW1hIFxcaW4gXFxHYW1tYX0gXFxTcF97W3NfXFxnYW1tYSx0X1xcZ2FtbWEpfV57a19cXGdhbW1hKzF9Il0sWzAsMiwiXFxiaWdvcGx1c197XFxnYW1tYSBcXGluIFxcR2FtbWF9IFxcRGlfe3NfXFxnYW1tYX1ee2tfXFxnYW1tYSsxfSJdLFsyLDAsImkoXFxYKSJdLFsyLDIsImkoXFxYKVxcY3VwIFpcXFkiXSxbMCwxLCIiLDAseyJzdHlsZSI6eyJ0YWlsIjp7Im5hbWUiOiJob29rIiwic2lkZSI6InRvcCJ9fX1dLFswLDIsIigwLHlfe3RfXFxnYW1tYX0pIl0sWzEsMywieV97c19cXGdhbW1hfSJdLFsyLDMsIiIsMix7InN0eWxlIjp7InRhaWwiOnsibmFtZSI6Imhvb2siLCJzaWRlIjoidG9wIn19fV0sWzMsMCwiIiwwLHsic3R5bGUiOnsibmFtZSI6ImNvcm5lciJ9fV1d
\[\begin{tikzcd}[sep=small]
	{\bigoplus_{\gamma \in \Gamma} \Sp_{[s_\gamma,t_\gamma)}^{k_\gamma+1}} && {i(\X)} \\
	\\
	{\bigoplus_{\gamma \in \Gamma} \Di_{s_\gamma}^{k_\gamma+1}} && {i(\X)\cup Z\Y}
	\arrow[hook, from=1-1, to=3-1]
	\arrow["{(0,y_{t_\gamma})}", from=1-1, to=1-3]
	\arrow["{y_{s_\gamma}}", from=3-1, to=3-3]
	\arrow[hook, from=1-3, to=3-3]
	\arrow["\lrcorner"{anchor=center, pos=0.125, rotate=180}, draw=none, from=3-3, to=1-1]
\end{tikzcd}\] which is clearly still tame. Since quotients of tame objects by tame subobjects are tame, assign an interval decomposition 
$$\Y/\big(i(\X) \cup Z\Y\big) = \bigoplus_{\delta \in \Delta} \bI_{[s_\delta,t_\delta)}^{k_\delta}$$ at the level of graded persistence modules. Assigning generators $y_{s_\delta} \in \Y$ to each interval, we have that $dy_{s_\delta} \in Z \Y$ by the cochain condition and that $y_{t_{\delta}} \in i(\X) \cup Z\Y$ by the interval decomposition. We add in the rest of $\Y$ via the pushout
% % https://q.uiver.app/#q=WzAsNCxbMCwwLCJcXGJpZ29wbHVzX3tcXGRlbHRhIFxcaW4gXFxEZWx0YX0gXFxTcF97W3NfXFxkZWx0YSx0X1xcZGVsdGEpfV57a19cXGRlbHRhKzF9Il0sWzAsMiwiXFxiaWdvcGx1c197XFxkZWx0YSBcXGluIFxcRGVsdGF9IFxcRGlfe3NfXFxkZWx0YX1ee2tfXFxkZWx0YSsxfSJdLFsyLDAsImkoXFxYKVxcY3VwIFpcXFkiXSxbMiwyLCJcXFkiXSxbMCwxLCIiLDAseyJzdHlsZSI6eyJ0YWlsIjp7Im5hbWUiOiJtb25vIn19fV0sWzAsMiwiKGR5X3tzX1xcZGVsdGF9LHlfe3RfXFxkZWx0YX0pIl0sWzEsMywieV97c19cXGRlbHRhfSJdLFsyLDMsIiIsMix7InN0eWxlIjp7InRhaWwiOnsibmFtZSI6Im1vbm8ifX19XSxbMywwLCIiLDAseyJzdHlsZSI6eyJuYW1lIjoiY29ybmVyIn19XV0=
\[\begin{tikzcd}[sep=small]
	{\bigoplus_{\delta \in \Delta} \Sp_{[s_\delta,t_\delta)}^{k_\delta+1}} && {i(\X)\cup Z\Y} \\
	\\
	{\bigoplus_{\delta \in \Delta} \Di_{s_\delta}^{k_\delta+1}} && \Y
	\arrow[tail, from=1-1, to=3-1]
	\arrow["{(dy_{s_\delta},y_{t_\delta})}", from=1-1, to=1-3]
	\arrow["{y_{s_\delta}}", from=3-1, to=3-3]
	\arrow[tail, from=1-3, to=3-3]
	\arrow["\lrcorner"{anchor=center, pos=0.125, rotate=180}, draw=none, from=3-3, to=1-1]
\end{tikzcd}\] This realises $i : \X \to \Y$ as a cofibration via the composition $\X \xrightarrow{\cong} i(\X) \rightarrowtail i(\X) \cup Z\Y \rightarrowtail \Y.$
\end{proof}

\begin{corollary}
        If $\X \in p\Ch_\Q$ is tame then $\X$ is cofibrant in the interval sphere model structure. 
\end{corollary}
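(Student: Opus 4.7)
The plan is to deduce this corollary as a direct application of \Cref{injectivity-compactness}. Recall that an object $\X$ is cofibrant precisely when the unique map $0 \to \X$ is a cofibration. So it suffices to verify that this map falls under the hypotheses of the previous lemma.

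First, I would observe that the zero persistent complex is trivially tame (any finite sequence discretizes it, or even the empty discretization), so both the source and target of $0 \to \X$ lie in $(p\Ch_\Q)^{\tame}$ as soon as $\X$ is tame. Second, the zero map is obviously injective. Therefore \Cref{injectivity-compactness} applies and gives that $0 \to \X$ is a cofibration in the interval-sphere model structure.

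The proof is essentially a one-line deduction, so there is no real obstacle beyond checking the two trivial hypotheses. I expect the write-up to simply read: \emph{Apply \Cref{injectivity-compactness} to the inclusion $0 \hookrightarrow \X$, which is an injection of tame persistent cochain complexes.}
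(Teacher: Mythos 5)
Your proposal is correct and coincides with the paper's argument: the corollary is obtained exactly by applying \Cref{injectivity-compactness} to the injective map $0 \to \X$, noting that $0$ is trivially tame. Nothing further is needed.
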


\begin{example} \rm{The open interval $\bI_{(s,t)} \otimes S^k \in p\Ch_\Q$ for $k \geq 1$ is an example of a cofibrant object which is locally compact but not tame. Such an object can be constructed via a sequence of attachments along $\Sp^{k+1}_{[s + \epsilon_i, t)} \hookrightarrow \Di^{k+1}_{s + \epsilon_i}$ for any descreasing sequence $(\epsilon_i \mid i \in \N, \epsilon_i \leq t-s)$ that converges to $0$. We conjecture that locally compact fully characterises the connected cofibrant objects, but leave this open for future work.}
\end{example}

\begin{lemma} \label{injectivity-compactness}
    If $i : \X \to \Y$ is an injective map of tame, connected objects $\X,\Y \in p\Ch_\Q$, then $i$ is a cofibration in the interval-sphere model structure. 
\end{lemma}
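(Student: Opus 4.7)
The plan is to realize $i$ as a composition of two relative $\bI$-cell attachments, mirroring the strategy of the preceding proof of \Cref{injectivity-compactness}: use the interval decomposition theorem (\Cref{interval-decomposition}) twice, first to adjoin the cocycles of $\Y$ not already in $i(\X)$, and then to adjoin the remaining non-closed elements. Since tame persistence modules form an abelian category, the cocycle sub-objects $Z\X, Z\Y$ and the successive quotients appearing in the argument remain tame, so each graded persistence module in sight admits a half-open interval decomposition.

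First I would consider $i(\X) \subseteq i(\X) \cup Z\Y \subseteq \Y$. Decomposing the tame graded persistence module $Z\Y/i(Z\X) \cong \bigoplus_{\gamma} \bI^{k_\gamma}_{[s_\gamma, t_\gamma)}$ and lifting generators to cocycles $y_{s_\gamma} \in Z\Y^{k_\gamma}(s_\gamma)$, the support condition forces $y_{t_\gamma} \in i(Z\X)$. This yields the attaching data
\[
    \big( (0,\, y_{t_\gamma})\colon \Sp_{[s_\gamma, t_\gamma)}^{k_\gamma+1} \to i(\X) \big)_\gamma,
\]
whose pushout against $\bigoplus_\gamma \Di^{k_\gamma+1}_{s_\gamma}$ identifies with the inclusion $i(\X) \hookrightarrow i(\X) \cup Z\Y$.

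Second I would apply the same procedure to the tame quotient $\Y/\bigl(i(\X) \cup Z\Y\bigr) \cong \bigoplus_{\delta} \bI^{k_\delta}_{[s_\delta, t_\delta)}$. For generators $y_{s_\delta}$, the cochain condition gives $dy_{s_\delta} \in Z\Y \subseteq i(\X) \cup Z\Y$, and the support condition gives $y_{t_\delta} \in i(\X) \cup Z\Y$. These data produce a coproduct of interval-sphere attachments whose pushout is precisely $i(\X) \cup Z\Y \hookrightarrow \Y$. Composing, $i$ factors as $\X \xrightarrow{\cong} i(\X) \rightarrowtail i(\X) \cup Z\Y \rightarrowtail \Y$, so $i$ is a relative $\bI$-cell complex and hence a cofibration.

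The main obstacle I anticipate is verifying that each pushout produces \emph{exactly} the intended intermediate object rather than an enlargement: this reduces to checking the exactness of the short exact sequences induced by the chosen interval decompositions, which in turn uses that the attaching maps encode both the differential and the transported-to-$t_\delta$ data faithfully. A secondary subtlety is the degree-zero case, where $\Di^0_s = 0$ forces special handling of degree-zero cocycles in $\Y/i(\X)$; this is where the connectedness hypothesis earns its keep, as $H^0 \X \cong H^0 \Y \cong \Q$ guarantees the degree-zero cocycles are already accounted for by the inclusion $i$, so no pathological degree-zero attaching cells are required.
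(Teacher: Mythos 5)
Your proposal is correct and follows essentially the same route as the paper's proof: decompose $Z\Y/i(Z\X)$ into intervals to attach the missing cocycles via spheres mapped in by $(0, y_{t_\gamma})$, then decompose the remaining quotient of $\Y$ and attach via $(dy_{s_\delta}, y_{t_\delta})$, exhibiting $i$ as the composite $\X \xrightarrow{\cong} i(\X) \rightarrowtail i(\X)\cup Z\Y \rightarrowtail \Y$ of relative $\bI$-cell attachments. The subtleties you flag (identifying each pushout with the intended subobject, and the degree-zero/connectedness issue) are real but are handled implicitly in the paper the same way you suggest.
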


\begin{proof}
    The map $i$ induces an injective map $i : Z\X \to Z\Y$ on cocycles. Since $\X$ and $\Y$ are tame, the quotient admits an interval decompostion $Z\Y/i(Z\X) = \oplus_\gamma \bI_{[s_\gamma,t_\gamma)}^{k_\gamma}$ at the level of graded persistence modules. For each interval pick a generator $y_{s_\gamma} \in Z\Y/i(Z\X)(s_\gamma)$, and note that $y_{t_\gamma}$ must be in $i(Z\X)$ by the interval decomposition. To add to $i(\X)$ the cocycles of $\Y$ not in $i(z\X)$, form the pushout
    % https://q.uiver.app/#q=WzAsNCxbMCwwLCJcXGJpZ29wbHVzX3tcXGdhbW1hIFxcaW4gXFxHYW1tYX0gXFxTcF97W3NfXFxnYW1tYSx0X1xcZ2FtbWEpfV57a19cXGdhbW1hKzF9Il0sWzAsMiwiXFxiaWdvcGx1c197XFxnYW1tYSBcXGluIFxcR2FtbWF9IFxcRGlfe3NfXFxnYW1tYX1ee2tfXFxnYW1tYSsxfSJdLFsyLDAsImkoXFxYKSJdLFsyLDIsImkoXFxYKVxcdW5kZXJzZXR7aShaXFxYKX17XFxvcGx1c30gWlxcWSJdLFswLDEsIiIsMCx7InN0eWxlIjp7InRhaWwiOnsibmFtZSI6Imhvb2siLCJzaWRlIjoidG9wIn19fV0sWzAsMiwiKDAseV97dF9cXGdhbW1hfSkiXSxbMSwzLCJ5X3tzX1xcZ2FtbWF9Il0sWzIsMywiIiwyLHsic3R5bGUiOnsidGFpbCI6eyJuYW1lIjoiaG9vayIsInNpZGUiOiJ0b3AifX19XSxbMywwLCIiLDAseyJzdHlsZSI6eyJuYW1lIjoiY29ybmVyIn19XV0=
\[\begin{tikzcd}[sep=small]
	{\bigoplus_{\gamma \in \Gamma} \Sp_{[s_\gamma,t_\gamma)}^{k_\gamma+1}} && {i(\X)} \\
	\\
	{\bigoplus_{\gamma \in \Gamma} \Di_{s_\gamma}^{k_\gamma+1}} && {i(\X)\underset{i(Z\X)}{\oplus} Z\Y,}
	\arrow[tail, from=1-1, to=3-1]
	\arrow["{(0,y_{t_\gamma})}", from=1-1, to=1-3]
	\arrow["{y_{s_\gamma}}", from=3-1, to=3-3]
	\arrow[tail, from=1-3, to=3-3]
	\arrow["\lrcorner"{anchor=center, pos=0.125, rotate=180}, draw=none, from=3-3, to=1-1]
\end{tikzcd}\] which is clearly still tame. Consider the interval decomposition 
$$\Y/\big(i(\X)\underset{i(Z\X)}{\oplus} Z\Y\big) = \bigoplus_{\delta \in \Delta} \bI_{[s_\delta,t_\delta)}^{k_\delta}$$ at the level of graded persistence modules. Given generators $y_{s_\delta} \in \Y$ of each interval, it follows that $dy_{s_\delta} \in Z \Y$ and that $y_{t_{\delta}} \in i(\X) \oplus_{i(Z\X)} Z\Y$ by the interval decomposition. Form now the pushout
% https://q.uiver.app/#q=WzAsNCxbMCwwLCJcXGJpZ29wbHVzX3tcXGRlbHRhIFxcaW4gXFxEZWx0YX0gXFxTcF97W3NfXFxkZWx0YSx0X1xcZGVsdGEpfV57a19cXGRlbHRhKzF9Il0sWzAsMiwiXFxiaWdvcGx1c197XFxkZWx0YSBcXGluIFxcRGVsdGF9IFxcRGlfe3NfXFxkZWx0YX1ee2tfXFxkZWx0YSsxfSJdLFsyLDAsImkoXFxYKVxcdW5kZXJzZXR7aShaXFxYKX17XFxvcGx1c30gWlxcWSJdLFsyLDIsIlxcWSJdLFswLDEsIiIsMCx7InN0eWxlIjp7InRhaWwiOnsibmFtZSI6Im1vbm8ifX19XSxbMCwyLCIoZHlfe3NfXFxkZWx0YX0seV97dF9cXGRlbHRhfSkiXSxbMSwzLCJ5X3tzX1xcZGVsdGF9Il0sWzIsMywiIiwyLHsic3R5bGUiOnsidGFpbCI6eyJuYW1lIjoibW9ubyJ9fX1dLFszLDAsIiIsMCx7InN0eWxlIjp7Im5hbWUiOiJjb3JuZXIifX1dXQ==
\[\begin{tikzcd}[sep=small]
	{\bigoplus_{\delta \in \Delta} \Sp_{[s_\delta,t_\delta)}^{k_\delta+1}} && {i(\X)\underset{i(Z\X)}{\oplus} Z\Y} \\
	\\
	{\bigoplus_{\delta \in \Delta} \Di_{s_\delta}^{k_\delta+1}} && \Y
	\arrow[tail, from=1-1, to=3-1]
	\arrow["{(dy_{s_\delta},y_{t_\delta})}", from=1-1, to=1-3]
	\arrow["{y_{s_\delta}}", from=3-1, to=3-3]
	\arrow[tail, from=1-3, to=3-3]
	\arrow["\lrcorner"{anchor=center, pos=0.125, rotate=180}, draw=none, from=3-3, to=1-1]
\end{tikzcd}\] We have thus exhibited $i : \X \to \Y$ as a cofibration via the composite
$$\X \xrightarrow{\cong} i(\X) \rightarrowtail i(\X)\underset{i(Z\X)}{\oplus} Z\Y \rightarrowtail \Y.$$
\end{proof}

\begin{corollary}
        If $\X \in (p\Ch_\Q)^\tame$ is connected, then $\X$ is cofibrant in the interval-sphere model structure. 
\end{corollary}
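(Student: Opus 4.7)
The plan is to recognize this corollary as an immediate consequence of \Cref{injectivity-compactness} applied to the unique map $0 \hookrightarrow \X$. By definition, $\X$ is cofibrant precisely when the map from the initial object $0$ to $\X$ is a cofibration, so it suffices to exhibit $0 \hookrightarrow \X$ as an instance of the hypothesis of the lemma.

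First I would verify that the hypotheses of \Cref{injectivity-compactness} are met. The map $0 \hookrightarrow \X$ is trivially injective. The zero persistent complex is tame, admitting the empty discretization (or indeed any discretization), so it lies in $(p\Ch_\Q)^{\tame}$. By hypothesis $\X$ is tame and connected, and the zero object is vacuously connected under any reasonable convention. Thus all assumptions of the lemma are in place, and we conclude that $0 \hookrightarrow \X$ is a cofibration, i.e. that $\X$ is cofibrant.

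There is essentially no obstacle here, as the work has already been done in \Cref{injectivity-compactness}: that lemma constructs an explicit presentation of an injective map between tame objects as a two-step cell attachment, first filling in the missing cocycles of the codomain via attachments of the form $\Sp^{k+1}_{[s,t)} \hookrightarrow \Di^{k+1}_s$, then filling in the remaining generators via a second round of such attachments. Specializing to $\X = 0$, this procedure builds $\X$ from scratch as an $\I$-cell complex whose first stage attaches intervals corresponding to the interval decomposition of $Z\X$, and whose second stage attaches intervals indexed by the interval decomposition of $\X / Z\X$. The only thing one might wish to spell out is that this cellular presentation terminates after finitely many steps under tameness, which is a direct consequence of the finiteness of the discretization of $\X$ and the resulting finiteness of the interval decompositions of $Z\X$ and $\X/Z\X$ in each degree.
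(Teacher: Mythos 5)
Your proposal is correct and is exactly the paper's argument: the paper proves this corollary in one line by applying the preceding lemma to the injective map $0 \to \X$, just as you do. The extra commentary about the two-stage cell attachment merely restates the content of that lemma and adds nothing needed for the corollary.
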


\begin{proof}
    Apply the previous lemma to the injective map $0 \to \X$.
\end{proof}

\section{Transfer of model structures}

The interval-sphere model structure on $\Ch_\Q$ enables us to relate the homotopy theory of persistent CDGAs to that of homotopy theory of copersistent simplicial sets.

\subsection{Rational homotopy theory model structures} CDGAs and minimal models are the basic algebraic tools of rational homotopy theory. We briefly review the literature, focusing on the results that we generalize to the persistent setting.  

\subsubsection{CDGAs} Transfer along the free-forgetful adjunction
$$
\begin{tikzcd}
\Lambda : \Ch_\Q \ar[r, shift left] & \CDGA_\Q : U \ar[l, shift left]
\end{tikzcd}
$$
yields the combinatorial model structure constructed by \cite{bousfield1976pl} on the category $\CDGA_\Q$, for which the sets
$$\begin{aligned}
\Lambda I & = \{ \: \Lambda S^k \hookrightarrow \Lambda D^k \:,\: 0\to \Lambda S^0 \: | \: k\in \N \:\} \\
\Lambda J &= \{ \: \Lambda D^k \to 0 \: | \: k\in \N \:\}.
\end{aligned}$$
constitute generating cofibrations and trivial cofibrations.

\subsubsection{Simplicial sets} There is a model structure on $\sSet$ whose 
\begin{enumerate}
    \item cofibrations are monomorphisms,
    \item weak equivalences are $H^*(-;\Q)$-isomorphisms, i.e., maps that induce isomorphisms on singular cohomology with $\Q$-coefficients.
\end{enumerate}
This model structure is a left Bousfield localization of the Kan model structure on simplicial sets. We call it the \textit{$
\Q$-model structure} and call its fibrations and weak equivalences \textit{$\Q$-fibrations} and \textit{$\Q$-equivalences}, respectively.

\subsubsection{The key adjunction} Let $\Delta$ be the simplex category. There is a simplicial object $\Omega_\bullet : \Delta^{op} \to \CDGA_\Q$ that associates to each $[n]$ the CDGA 
$$\Omega_n = \Lambda(x_1,dx_1, \cdots, x_n, dx_n)$$ of polynomial differential forms with rational coefficients over the $n$-simplex with degrees $\lvert x_i \rvert = 0$ for all $i$. Kan extension of this simplicial object over the Yoneda embedding yields an adjunction
\[\begin{tikzcd}
            \langle - \rangle : \CDGA_\Q \arrow[r, shift left=1ex, ""{name=G}] & \sSet^\op : \apl \arrow[l, shift left=.5ex, ""{name=F}]
            \arrow[phantom, from=F, to=G, , "\scriptscriptstyle\boldsymbol{\bot}"]
\end{tikzcd}.\] The functor $\mathcal{A}_{PL}$ maps a simplicial set $X$ to that CDGA of polynomial differential forms. The \textit{spatial realization} functor $\langle - \rangle$ sends a CDGA $\mathcal{A}$ to the simplicial set of cosingular simplices $\langle \mathcal{A} \rangle = \text{Map}(\mathcal{A}, \Omega_\bullet)$.

\begin{theorem}[\cite{bousfield1976pl}, Section 8] \label{quillen_adjunction}
    The functors $\langle - \rangle$ and $\apl$ form a  Quillen adjunction where $\CDGA_\Q$ is endowed with the projective model structure and $\sSet^{op}$ with the (opposite of) the $\Q$-model structure. This adjunction induces an equivalence between the homotopy categories of the full subcategories of simply connected CDGAs of finite type $\CDGA_\Q^{\geq 2, ft} \subseteq \CDGA_\Q$ and that of simply connected spaces of finite type $\sSet_{\geq 2}^{ ft} \subseteq \sSet$. 
\end{theorem}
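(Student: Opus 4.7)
The plan is to decompose the theorem into three independent tasks: establishing the adjunction, verifying the Quillen pair property, and proving the restricted equivalence of homotopy categories. The adjunction itself is formal: the simplicial CDGA $\Omega_\bullet : \DDelta^\op \to \CDGA_\Q$, extended via left Kan extension along the Yoneda embedding $\DDelta \hookrightarrow \sSet$, produces a colimit-preserving functor $\sSet \to \CDGA_\Q^\op$ whose right adjoint is the nerve $X \mapsto \Hom(X, \Omega_\bullet)$. Unpacking opposites recovers $\langle - \rangle \dashv \apl$.

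For the Quillen pair property, I would verify that the right adjoint $\apl : \sSet^\op \to \CDGA_\Q$ preserves fibrations and trivial fibrations. Since cofibrations in the $\Q$-model structure on $\sSet$ are monomorphisms, and fibrations in the projective model structure on $\CDGA_\Q$ are the degree-wise surjections, this reduces to two classical facts: (i) $\apl$ sends a monomorphism $X \hookrightarrow Y$ to a surjection of CDGAs, which follows from the simplex-by-simplex extension property of polynomial forms; and (ii) $\apl$ sends $\Q$-equivalences to quasi-isomorphisms, which is the polynomial de Rham theorem $H^\bullet(\apl X) \cong H^\bullet(X;\Q)$. Combining (i) and (ii) handles trivial fibrations as well.

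For the restricted equivalence, I would argue by a Sullivan/Postnikov induction. Given $\mathcal{A} \in \CDGA_\Q^{\geq 2, ft}$, construct a minimal Sullivan model $M \trivfib \mathcal{A}$ by iterated free attachments of generators in increasing degree, and show inductively that $\langle M \rangle$ realises a rational Postnikov tower whose successive fibres are $K(\Q, n)$'s dual to the generators of $M$. Comparing cohomology stage by stage then identifies $\apl \langle \mathcal{A} \rangle \to \mathcal{A}$ as a quasi-isomorphism. Dually, for $X \in \sSet_{\geq 2}^{ft}$, applying $\apl$ to a rational Postnikov tower of $X$ yields a minimal Sullivan model of $\apl X$, from which the unit $X \to \langle \apl X \rangle$ is seen to be a $\Q$-equivalence.

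The main obstacle is controlling the inductive step in the Postnikov argument through the adjunction: one must verify that $\langle \Lambda V \rangle$ is a rational Eilenberg--MacLane space when $V$ is concentrated in a single positive degree, and that pulling back along a cocycle in $\CDGA_\Q$ matches the topological principal fibration defining the next Postnikov stage. The finite-type hypothesis is essential to ensure convergence and applicability of the de Rham theorem at each stage, while simple-connectedness rules out pathologies arising from non-nilpotent fundamental groups on the space side; without these, the unit and counit of the adjunction fail to be weak equivalences, even though the Quillen pair itself remains valid in full generality.
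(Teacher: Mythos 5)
This statement is not proved in the paper at all: it is a recollection quoted from Bousfield--Gugenheim (the cited Section 8 and the later sections on the homotopy-category equivalence), so there is no internal argument to compare against. Your sketch reconstructs essentially the argument of that reference: the adjunction by left Kan extension of $\Omega_\bullet$, the Quillen-pair check via extendability of polynomial forms (monomorphisms of simplicial sets go to surjections of CDGAs) together with the PL de Rham theorem, and the restricted equivalence by a Sullivan/Postnikov induction on minimal models; this is the standard route and is correct in outline. Two small corrections are worth recording. First, the PL de Rham theorem holds for arbitrary simplicial sets, so finite type is not what makes it applicable; the finite-type hypothesis enters exactly at the step you single out as the main obstacle, namely identifying $\langle \Lambda(V,n)\rangle$ with a $K(V^\vee,n)$ and ensuring the double-dual map $V\to V^{\vee\vee}$ is an isomorphism, which is what makes the unit and counit weak equivalences stage by stage (the cited source in fact proves the equivalence for nilpotent spaces of finite type, of which the simply connected case stated here is a special case). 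Second, in your formal adjunction paragraph the roles are swapped: the colimit-preserving functor $\sSet \to \CDGA_\Q^{\op}$ obtained by Kan extension is $\apl$ itself, $X \mapsto \Hom_{\sSet}(X,\Omega_\bullet)$, and its right adjoint is $\mathcal{A} \mapsto \Hom_{\CDGA_\Q}(\mathcal{A},\Omega_\bullet)=\langle \mathcal{A}\rangle$; unpacking opposites then gives the adjunction $\langle - \rangle \dashv \apl$ in the paper's convention, as you state.
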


\subsection{Transfer of model structures} We begin with a transfer of the interval-sphere model structure to persistent CDGAs, followed by its comparison with a certain projective model structure on copersistent simplicial sets.

\subsubsection{Transferred model structure on $p\CDGA$} Post-composition of persistent objects with the free-forgetful  adjunction yields an adjunction
$$
\begin{tikzcd}
    \Lambda:p\Ch_\Q \arrow[r, shift left=1ex, "{}"{name=G}] & p\CDGA_\Q : U \arrow[l, shift left=.5ex, "{}"{name=F}]
    \arrow[phantom, from=G, to=F, "\scriptscriptstyle\boldsymbol{\bot}"]
\end{tikzcd}
$$ at the level of persistent objects.

The category $p\CDGA_\Q$ is locally finitely presentable, hence the small object argument can be applied to any set of maps. Its compact objects are the persistent CDGAs $\bA$ such that $\bA(t)$ is a $\Q$-algebra of finite presentation for every $t\in \R_+$. We denote by $U^{-1}\W$ the class of maps $f:\bA \to \bB$ in $p\CDGA_\Q$ such that $Uf \in \W$. In other words, $f(s)$ is a quasi-isomorphism for every $s\in \R_+$. We also write $$
\begin{aligned}
    \Lambda \I &=\big\{\: \Lambda \Sp^k_{[s,t)} \to \Lambda \Di^k_s \:|\: 0\leq s\leq t \:\big\} \\
    \Lambda \J &=\big\{\: \Lambda \Di^k_{t} \to \Lambda \Di^k_s \:|\: 0\leq s\leq t \:\big\}
\end{aligned}
$$
for the images of $\I,\J \subseteq p\Ch_\Q$ under the functor $\Lambda$.

\begin{theorem}\label{modelstructureCDGA}
    There is a combinatorial model structure on $p\CDGA_\Q$ for which $U^{-1}\W$ is the class of weak equivalences and the sets $\Lambda \I, \Lambda\J$ are, respectively, those of the generating cofibrations and trivial cofibrations.
\end{theorem}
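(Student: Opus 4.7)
The plan is to invoke Kan's transfer theorem for cofibrantly generated (equivalently, combinatorial) model categories along the free--forgetful adjunction $\Lambda \dashv U$ (see e.g.\ \cite[Thm.~11.3.2]{HH}). The source $p\Ch_\Q$ is combinatorial by \Cref{modelstructurechaincomplexes} together with \Cref{cofib-gen}, the target $p\CDGA_\Q$ is locally finitely presentable (as a category of functors valued in a locally finitely presentable category), so the small object argument applies to any set of morphisms, and the forgetful functor $U$ preserves all pointwise colimits---in particular filtered colimits---because colimits of CDGAs can be computed after forgetting to the underlying cochain complexes. The theorem then reduces to checking the standard acyclicity condition: every relative $\Lambda\J$-cell complex is sent by $U$ into $\W$, i.e., is a pointwise quasi-isomorphism of persistent cochain complexes.

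The main work is verifying this acyclicity. Since colimits in $p\CDGA_\Q = \Fun(\R_+, \CDGA_\Q)$ are computed pointwise, any pushout of a generator in $\Lambda\J_0$, say
\[
\begin{tikzcd}
\Lambda \Di^k_t \ar[d] \ar[r] & \bA \ar[d] \\
\Lambda \Di^k_s \ar[r] & \bB,
\end{tikzcd}
\]
may be analyzed index by index. Using that $(\Lambda \Di^k_s)(r) = \Lambda D^k$ for $r \ge s$ and equals $\Q$ otherwise, and that the coproduct of CDGAs over $\Q$ is the tensor product over $\Q$, a direct computation shows that the pointwise map $\bA(r) \to \bB(r)$ is the identity whenever $r < s$ or $r \ge t$, and is of the form $\bA(r) \to \bA(r) \otimes_\Q \Lambda D^k$ whenever $s \le r < t$. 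The latter is obtained by tensoring $\bA(r)$ with the unit map $\Q \to \Lambda D^k$, which is a quasi-isomorphism since $\Lambda D^k$ is the free CDGA on an acyclic complex; the K\"unneth theorem over the field $\Q$ then forces the map itself to be a quasi-isomorphism. The generators in $\Lambda\J_\infty$, i.e.\ the maps $0 \to \Lambda \Di^k_s$, are handled by an entirely analogous pointwise computation.

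It remains to propagate this pointwise quasi-isomorphism property through the remaining cell-complex constructions. Arbitrary coproducts of the pushout squares above are controlled pointwise by the same tensor-product argument, and transfinite composites of pointwise quasi-isomorphisms are again pointwise quasi-isomorphisms because filtered colimits in $\CDGA_\Q$ are exact (so cohomology commutes with them) and are computed pointwise in the functor category. Hence every relative $\Lambda\J$-cell complex lies in $U^{-1}\W$, and the transfer theorem delivers the asserted combinatorial model structure. The only genuine obstacle throughout is the acyclicity check, and it hinges crucially on working over the field $\Q$ of characteristic zero, where the K\"unneth theorem is available without cofibrancy hypotheses.
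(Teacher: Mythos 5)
Your proposal follows essentially the same route as the paper: transfer along $\Lambda\dashv U$ via Kan's theorem, reduce to the acyclicity condition $\Cell(\Lambda\J)\subseteq U^{-1}\W$, compute pushouts of the generators pointwise (identity outside $[s,t)$, and $\bA(r)\to\bA(r)\otimes\Lambda D^k$ on $[s,t)$, a quasi-isomorphism since $\Lambda D^k$ is acyclic over $\Q$), and close up under transfinite composition using that $U$ preserves filtered colimits. One small caveat: your parenthetical claim that $U$ preserves \emph{all} colimits is false (coproducts of CDGAs are tensor products, not direct sums of underlying complexes), but your argument never uses this --- only preservation of filtered colimits, which is correct --- so the proof stands as is.
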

\begin{proof}
    It suffices to check that the conditions for the recognition theorem of D. Kan \cite[Theorem~11.3.2]{HH} are satisfied. For this it is enough to show that $$
    \Cell(\Lambda\J)\subseteq U^{-1}\W
    $$
    where $\Cell(\Lambda \J)\subseteq p\CDGA_\Q$ denotes the class of relative $\Lambda \J$-cell complexes. Since $\Lambda\J\subseteq U^{-1}\W$, it suffices to show that the class $U^{-1}\W$ is closed under cobase change and transfinite composition. The latter condition is clear since the forgetful functor $U$ preserves filtered colimits. We thus need to prove that the class $\Lambda\J$ is closed under cobase change. To this end consider a pushout square 
    $$
    \begin{tikzcd}
        {\Lambda\Di^k_t} \ar[r] \ar[d] & \bA \ar[d] \\ 
        {\Lambda\Di^k_s} \ar[r] & \bB \arrow[ul, phantom, "\ulcorner", very near start]
    \end{tikzcd}
    $$
    in $p\CDGA_\Q$. One sees immediately that $\bA(u)\to \bB(u)$ is an isomorphism when $u<s$ or $t\leq u$. When $s\leq u < t$, we find $\bB(u)\cong \bA(u)\otimes \Lambda(a_u, da_u)$ so that the inclusion $\bA(u)\subseteq \bB(u)$ is a quasi-isomorphism.
\end{proof}

\subsection{Persistent minimal models} Let $[n] = \{ 0,1, \ldots, n\}$ be the usual poset category. In \cite{hess2024celldecompositionspersistentminimal}, we defined a persistent minimal model $\mathbb{M} : [n] \to \CDGA_\Q$ of a (finitely-indexed) simply connected, persistent CDGA $\mathbb{A} : [n] \to \CDGA_\Q$ as a homotopy commutative diagram
%https://q.uiver.app/#q=WzAsMTAsWzIsMCwiXFxtTShyKSJdLFs0LDAsIlxcbU0ocisxKSJdLFsyLDIsIlxcbUEocikiXSxbNCwyLCJcXG1BKHIrMSkiXSxbMCwwLCJcXGxkb3RzIl0sWzYsMCwiXFxtTShyKzIpIl0sWzYsMiwiXFxtQShyKzIpIl0sWzAsMiwiXFxsZG90cyJdLFs4LDAsIlxcbGRvdHMiXSxbOCwyLCJcXGxkb3RzIl0sWzAsMV0sWzAsMiwibShyKSJdLFsyLDNdLFsxLDMsIm0ocisxKSJdLFs0LDBdLFsxLDVdLFsxLDIsIkgocikiLDAseyJzaG9ydGVuIjp7InNvdXJjZSI6MjAsInRhcmdldCI6MjB9LCJsZXZlbCI6Mn1dLFs1LDYsIm0ocisyKSJdLFszLDZdLFs1LDMsIkgocisxKSIsMCx7InNob3J0ZW4iOnsic291cmNlIjoyMCwidGFyZ2V0IjoyMH0sImxldmVsIjoyfV0sWzcsMl0sWzUsOF0sWzYsOV1d
\begin{equation} \label{tame-minimal-model} \begin{tikzcd}[sep=scriptsize]
	\ldots && {\bM(r)} && {\bM(r+1)} && {\bM(r+2)} && \ldots \\
	\\
	\ldots && {\bA(r)} && {\bA(r+1)} && {\bA(r+2)} && \ldots
	\arrow[from=1-1, to=1-3]
	\arrow[from=1-3, to=1-5]
	\arrow["{m(r)}", from=1-3, to=3-3]
	\arrow[from=1-5, to=1-7]
	\arrow["{H(r)}", shorten <=13pt, shorten >=13pt, Rightarrow, from=1-5, to=3-3]
	\arrow["{m(r+1)}", from=1-5, to=3-5]
	\arrow[from=1-7, to=1-9]
	\arrow["{H(r+1)}", shorten <=14pt, shorten >=14pt, Rightarrow, from=1-7, to=3-5]
	\arrow["{m(r+2)}", from=1-7, to=3-7]
	\arrow[from=3-1, to=3-3]
	\arrow[from=3-3, to=3-5]
	\arrow[from=3-5, to=3-7]
	\arrow[from=3-7, to=3-9]
\end{tikzcd}\end{equation} in the category $\CDGA_\Q$ where the vertical maps are Sullivan minimal models \cite{sullivan}. We denote such a homotopy-commutative diagram by $m : \bM \xrightarrow{k}_2 \bA$ for $\bA$ for shorthand. We first re-state the following motivating theorem from \cite{hess2024celldecompositionspersistentminimal} for the use of interval spheres. 

\begin{theorem}[4.14, \cite{hess2024celldecompositionspersistentminimal}]\label{structure-theorem}
    Every simply-connected persistent CDGA $\bA : [n] \to \CDGA$ over finite index $[n]$ admits a persistent minimal model $\bM$ whose skeletal filtration is constructed as a sequence of degree $k$ persistent Hirsch extensions
    % https://q.uiver.app/#q=WzAsNSxbMCwwLCJcXExhbWJkYSBcXGJTXntrKzF9KEhea1xcQ19tKSJdLFswLDIsIlxcTGFtYmRhIFxcYkRee2srMX0oSF5rXFxDX20pIl0sWzIsMCwiXFxiTV97ay0xfSJdLFsyLDIsIlxcYk1fayJdLFs0LDAsIlxcYkEiXSxbMCwxLCIiLDAseyJzdHlsZSI6eyJ0YWlsIjp7Im5hbWUiOiJob29rIiwic2lkZSI6InRvcCJ9fX1dLFswLDJdLFsxLDNdLFsyLDMsIiIsMix7InN0eWxlIjp7InRhaWwiOnsibmFtZSI6Imhvb2siLCJzaWRlIjoidG9wIn19fV0sWzIsNCwibSJdLFszLDQsIm1fayIsMix7ImN1cnZlIjozfV0sWzIsNCwiMiIsMix7ImxhYmVsX3Bvc2l0aW9uIjo4MH1dLFszLDQsIjIiLDIseyJsYWJlbF9wb3NpdGlvbiI6OTAsImN1cnZlIjozfV0sWzMsMCwiIiwwLHsic3R5bGUiOnsibmFtZSI6ImNvcm5lciJ9fV1d
\[\begin{tikzcd}
	{\Lambda \Sp^{k+1}(H^k\C_m)} && {\bM_{k-1}} && \bA \\
	\\
	{\Lambda \Di^{k+1}(H^k\C_m)} && {\bM_k}
	\arrow[from=1-1, to=1-3]
	\arrow[hook, from=1-1, to=3-1]
	\arrow["m", from=1-3, to=1-5]
	\arrow["2"'{pos=0.8}, from=1-3, to=1-5]
	\arrow[hook, from=1-3, to=3-3]
	\arrow[from=3-1, to=3-3]
	\arrow["\lrcorner"{anchor=center, pos=0.125, rotate=180}, draw=none, from=3-3, to=1-1]
	\arrow["{m_k}"', from=3-3, to=1-5]
	\arrow["2"'{pos=0.9}, from=3-3, to=1-5]
\end{tikzcd}\]
\end{theorem}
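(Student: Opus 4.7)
The plan is to proceed by induction on the degree $k$, mimicking the classical Sullivan construction of a minimal model but carried out coherently across the entire persistent diagram. Start with the persistent minimal model $\bM_1 = \Q$ (constant), which is the 1-skeleton thanks to the simple-connectedness hypothesis, together with the obvious unit map $m_1 : \bM_1 \to \bA$. Assume inductively that $\bM_{k-1}$ together with a homotopy-commutative map $m_{k-1} : \bM_{k-1} \xrightarrow{k-1}_2 \bA$ has been constructed, so that $H^j(m_{k-1}(r))$ is an isomorphism for $j < k$ and a monomorphism for $j = k$ at every index $r \in [n]$.

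The next step is to construct the persistent mapping cone $\C_m$ of $m_{k-1}$ in $p\Ch_\Q$ and identify $H^k\C_m$ as the persistence module of obstructions: its classes at index $r$ parametrise both the cokernel of $H^k(m_{k-1}(r))$ (new generators to be added) and the preimages of relations needed to make $H^{k+1}(m_{k-1}(r))$ an isomorphism. Since $\bA$ is finitely indexed, $\C_m$ is tame, so \Cref{interval-decomposition} applies and yields
$$
H^k\C_m \;\cong\; \bigoplus_{\gamma \in \Gamma_k} \bI_{[s_\gamma,\,t_\gamma)}.
$$
For each summand, choose a cocycle representative $x_{s_\gamma} \in Z\C_m^{k+1}(s_\gamma)$ lifting the class, together with a witness $y_{t_\gamma} \in \bA^k(t_\gamma)$ of its eventual triviality at time $t_\gamma$. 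This data assembles into the attaching map of solid arrows in the pushout diagram of the theorem, where the top horizontal map $\Lambda \Sp^{k+1}(H^k\C_m) \to \bM_{k-1}$ is obtained by transferring the choices across the adjunction $\Lambda \dashv U$ of \Cref{modelstructureCDGA}. The resulting pushout $\bM_k$ is, by construction, a persistent Hirsch extension in the sense of \cite{hess2024celldecompositionspersistentminimal}, and the universal property of the pushout provides the extended homotopy-commutative square $m_k : \bM_k \xrightarrow{k}_2 \bA$.

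The hard part is the simultaneous management of the inductive data across all indices. At a single degree, classical Sullivan theory ensures that one can kill obstructions and create generators independently; in the persistent setting, one must execute these operations compatibly with all structure maps of $\bA$, and in particular ensure that a generator born at index $s_\gamma$ is still meaningful (i.e.\ does not accidentally acquire coboundaries) until index $t_\gamma$. This is precisely what the interval decomposition of $H^k\C_m$ guarantees: each interval summand dictates the lifespan of the corresponding generator, and the persistent sphere–disk pair $\Sp^{k+1}_{[s_\gamma,t_\gamma)} \hookrightarrow \Di^{k+1}_{s_\gamma}$ models exactly this lifespan at the level of cochains. The homotopy $H(r)$ in diagram (\ref{tame-minimal-model}) is then reconstructed by gluing the chosen witnesses $y_{t_\gamma}$ with the previously existing homotopies.

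Finally, one must verify that the transfinite union $\bM = \colim_k \bM_k$ is a persistent minimal Sullivan model of $\bA$: minimality follows because each attachment adjoins cells in strictly increasing degrees ($k+1$) with differentials landing in the decomposables of $\bM_{k-1}$; the quasi-isomorphism property follows degreewise from the obstruction-theoretic construction, since by the $k$-th stage all of $H^{\le k}$ has been matched, and at each index $r$ the induced map $H^*(m(r))$ is an isomorphism in the colimit. As the authors note, since the theorem is re-stated from \cite{hess2024celldecompositionspersistentminimal}, the detailed verification of the last two points is carried out there; the novelty in the present article is that, in light of \Cref{modelstructureCDGA}, each pushout above is now a genuine cell attachment in the transferred interval–sphere model structure, so $\bM$ is an honest $\Lambda\I$-cell complex and thus cofibrant.
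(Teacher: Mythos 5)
There is an important point of comparison to make first: this paper does not actually prove \Cref{structure-theorem}. The statement is re-stated from \cite{hess2024celldecompositionspersistentminimal} (hence the label ``4.14''), and the only new content in this section is \Cref{persistent-min-moodel}, namely that the pushouts appearing in the skeletal filtration are genuine cell attachments in the transferred model structure of \Cref{modelstructureCDGA}. So there is no in-paper argument to measure you against; your sketch has to stand on its own as a reconstruction of the cited construction. At the level of strategy it does follow the intended route: induction on degree, an obstruction persistence module $H^k\C_m$, its interval decomposition via \Cref{interval-decomposition}, and a persistent Hirsch extension along $\Lambda \Sp^{k+1}(H^k\C_m) \hookrightarrow \Lambda \Di^{k+1}(H^k\C_m)$.

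As a proof, however, the proposal has concrete gaps exactly where the real work lies. First, the attaching map of the pushout is a map $\Lambda \Sp^{k+1}(H^k\C_m) \to \bM_{k-1}$; by the description of maps out of interval spheres, each summand $\bI_{[s_\gamma,t_\gamma)}$ of $H^k\C_m$ requires a degree-$(k+1)$ cocycle of $\bM_{k-1}(s_\gamma)$ \emph{together with an element of $\bM_{k-1}^{k}(t_\gamma)$ bounding its image at time $t_\gamma$}. The data you specify --- a cocycle of the cone at $s_\gamma$ and a witness $y_{t_\gamma}\in \bA^k(t_\gamma)$ --- is not that datum: the death of a class of $H^k\C_m$ at $t_\gamma$ is a statement about the pair $(\bM_{k-1},\bA)$, and extracting from it a bounding element inside $\bM_{k-1}$ itself, compatibly with the representative chosen at $s_\gamma$ and transported along all structure maps, is precisely the nontrivial step; you assert that the choices ``assemble'' rather than proving it (and your degree bookkeeping, a representative of a class in $H^k\C_m$ taken in $Z\C_m^{k+1}$, does not match the diagram). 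Second, the claim that the homotopies $H(r)$ of diagram (\ref{tame-minimal-model}) are ``reconstructed by gluing the chosen witnesses'' is the heart of the persistent problem: the diagram is only homotopy-commutative, so representatives and homotopies must be chosen coherently in $r$, and the interval decomposition by itself does not guarantee that a generator born at $s_\gamma$ acquires no premature coboundary --- that has to be arranged by the construction (this is what the interval-surgery machinery of the cited paper does). Finally, you defer minimality and the quasi-isomorphism property of the colimit to \cite{hess2024celldecompositionspersistentminimal}, which is circular given that the statement to be proved is that theorem. The outline is the right one, but the proposal does not close the argument.
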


The results holds equally well for any simply connected, tame CDGA $\widetilde{\mathbb{A}} : \R_+ \to \CDGA$ since, by definition, it is a Left Kan extension
\[
\begin{tikzcd}
        {\{t_0  < \ldots < t_n\}} \ar[dr, "\mathbb{A}"]\ar[d, "t"'] & \\
        {\R_+} \ar[r, "{\widetilde{\mathbb{A}}=\Lan_t \mathbb{A}}"'] & \CDGA_\Q
    \end{tikzcd}
\] and $\{t_0  < \ldots < t_n\} \cong [n]$ as poset categories. The following corollary -- a direct consequence of \ref{modelstructureCDGA} and \ref{structure-theorem} -- is a generalization of the classical cell complex definition (Cf. \cite{hess2024celldecompositionspersistentminimal}) of minimal models.  

\begin{corollary}\label{persistent-min-moodel}
    Persistent minimal models are $\bI$-cell complexes in the transferred interval-sphere model structure of \Cref{modelstructureCDGA}. 
\end{corollary}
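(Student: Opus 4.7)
The plan is to unpack \Cref{structure-theorem} directly into the language of the transferred model structure of \Cref{modelstructureCDGA}. First I would record that, for any simply-connected tame persistent CDGA $\bA$, the cited theorem presents the persistent minimal model $\bM$ as a sequential colimit $\Q = \bM_0 \to \bM_1 \to \cdots \to \bM_k \to \cdots$ in which each step $\bM_{k-1} \to \bM_k$ is a genuine pushout in $p\CDGA_\Q$ along the canonical inclusion $\Lambda\,\Sp^{k+1}(H^k\C_m) \hookrightarrow \Lambda\,\Di^{k+1}(H^k\C_m)$.

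The next step is to recognise each such attachment as a pushout along a coproduct of maps from $\Lambda\I$. Tameness of $\bA$ propagates to tameness of each coefficient module $H^k\C_m$ (using that tame persistence modules form an abelian category closed under taking cohomology, as already exploited in the proof of \Cref{injectivity-compactness}). Then \Cref{interval-decomposition} yields a decomposition $H^k\C_m \cong \bigoplus_{\alpha \in A_k} \bI_{[s_\alpha,t_\alpha)}$, and unfolding the definitions of $\Sp^{k+1}(\V)$ and $\Di^{k+1}(\V)$ rewrites the inclusion $\Sp^{k+1}(H^k\C_m) \hookrightarrow \Di^{k+1}(H^k\C_m)$ as the coproduct $\bigoplus_\alpha \bigl(\Sp^{k+1}_{[s_\alpha,t_\alpha)} \hookrightarrow \Di^{k+1}_{s_\alpha}\bigr)$ of elements of $\I$. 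Because $\Lambda$ is a left adjoint it preserves coproducts, so applying $\Lambda$ exhibits the $k$-th Hirsch extension as a pushout along a coproduct of maps in $\Lambda\I$.

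To conclude I would invoke the standard fact that $\Cell(\Lambda\I)$ is closed under pushouts of coproducts of generators (such a pushout can be reindexed as a transfinite composite of single-generator attachments over $A_k$) and under sequential composition over $k$. Together with the base case $\bM_0 = \Q$, this presents the unit map $\Q \to \bM$ as a relative $\Lambda\I$-cell complex, which is precisely the statement that $\bM$ is an $\bI$-cell complex in the transferred model structure. The main obstacle is really only bookkeeping: one must verify that the tameness hypothesis genuinely propagates to each $H^k\C_m$ so that the interval decomposition is legally available, and that Theorem \ref{structure-theorem}'s Hirsch extensions are honest $1$-categorical pushouts in $p\CDGA_\Q$ (not merely homotopy pushouts); everything else is a formal consequence of the left adjointness of $\Lambda$ and the closure properties of $\Cell(\Lambda\I)$.
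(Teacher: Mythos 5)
Your proposal is correct and follows exactly the route the paper intends: the paper offers no written-out proof, asserting the corollary as a direct consequence of \Cref{modelstructureCDGA} and \Cref{structure-theorem}, and your argument simply makes that explicit by decomposing each Hirsch extension $\Lambda\Sp^{k+1}(H^k\C_m)\hookrightarrow\Lambda\Di^{k+1}(H^k\C_m)$ into a coproduct of generators of $\Lambda\I$ via the interval decomposition of $H^k\C_m$ and using closure of $\Cell(\Lambda\I)$ under such pushouts and sequential composition. The bookkeeping points you flag (tameness of $H^k\C_m$, the attachments being genuine $1$-categorical pushouts) are indeed the only things to check, and both are already built into the statement and notation of \Cref{structure-theorem}.
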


\subsubsection{Model structure on $p^*\sSet$} We now establish a persistent variant of the classical Quillen pair, setting the ground for the study of \textit{persistent rational homotopy theory}. Denote by
$$
p^*\sSet = \Fun(\R_+^\op, \sSet)\cong\Fun(\R_-, \sSet)
$$ 
the category of \textit{copersistent} spaces. As above, the functors $\langle - \rangle$ and $\apl$ can be extended to persistent CDGAs and copersistent spaces, respectively. This yields the following contravariant adjunction:
$$
\begin{tikzcd}
    \langle - \rangle :p\CDGA_\Q \arrow[r, shift left=1ex, "{}"{name=G}] & (p^*\sSet)^\op : \apl .\arrow[l, shift left=.5ex, "{}"{name=F}]
    \arrow[phantom, from=G, to=F, "\scriptscriptstyle\boldsymbol{\bot}"]
\end{tikzcd}
$$

Given $\sSet$ is cofibrantly generated, the category $p^*\sSet = \Fun(\R_-, \sSet)$ of copersistent spaces admits the projective $\Q$-model structure \cite[11.6.1]{HH}, which is described as follows. A map $f:\X \to \Y$ in $p^* \sSet$ is
\begin{enumerate}
    \item a fibration if $f(s):\X(s)\to \Y(s)$ is a $\Q$-fibration for every $s\leq 0$,
    \item a weak equivalence if $f(s):\X(s)\to \Y(s)$ is a $\Q$-equivalence for every $s\leq 0$.
\end{enumerate}

\subsection{The Quillen pair} Finally, we show that our new model structure forms a Quillen pair when compared with the above projective model structure on copersistent simplicial sets. 

\begin{proposition} \label{persistent-quillen-pair}
    The contravariant adjunction
    $$
    \begin{tikzcd}
        \langle - \rangle :p\CDGA_\Q \arrow[r, shift left=1ex, "{}"{name=G}] & (p^*\sSet)^\op : \apl \arrow[l, shift left=.5ex, "{}"{name=F}]
        \arrow[phantom, from=G, to=F, "\scriptscriptstyle\boldsymbol{\bot}"]
    \end{tikzcd}
    $$
    is a Quillen pair when $(p^*\sSet)^\op$ is endowed with the opposite of the projective $\Q$-model structure, and $p\CDGA_\Q$ is given the interval-sphere model structure of \Cref{modelstructureCDGA}.
\end{proposition}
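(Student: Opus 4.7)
The plan is to exploit the cofibrant generation of $p\CDGA_\Q$ from \Cref{modelstructureCDGA}, which reduces the task to verifying that the left adjoint $\langle - \rangle$ sends each generator in $\Lambda \I$ to a cofibration and each generator in $\Lambda \J$ to a trivial cofibration in $(p^*\sSet)^\op$. Since $\langle - \rangle$ extends to the persistent setting by objectwise postcomposition, we have $\langle \bA \rangle(r) = \langle \bA(r) \rangle$ for every $r \in \R_+$; dually, the (trivial) cofibrations of the opposite of the projective $\Q$-model structure on $p^*\sSet$ are precisely the pointwise (trivial) $\Q$-fibrations. Hence the verification reduces to a levelwise check.

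First, for a generator $\Lambda \Sp^k_{[s,t)} \to \Lambda \Di^k_s$ in $\Lambda \I$, the Kan extension description of persistent spheres and disks shows that its evaluation at $r \in \R_+$ is either an identity (when $r < s$ or $r \geq t$) or the standard inclusion $\Lambda S^k \hookrightarrow \Lambda D^k$ (when $s \leq r < t$). Applying the classical $\langle - \rangle$ pointwise, the image at each $r$ is either an identity or the realization $\langle \Lambda D^k \rangle \to \langle \Lambda S^k \rangle$ in $\sSet$. Since $\Lambda S^k \hookrightarrow \Lambda D^k$ is a generating cofibration of $\CDGA_\Q$, \Cref{quillen_adjunction} ensures that $\langle \Lambda D^k \rangle \to \langle \Lambda S^k \rangle$ is a $\Q$-fibration in $\sSet$. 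Consequently the image is a pointwise $\Q$-fibration, i.e., a fibration in the projective model structure on $p^*\sSet$, and therefore a cofibration in $(p^*\sSet)^\op$.

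Similarly, for a generator $\Lambda \Di^k_t \to \Lambda \Di^k_s$ in $\Lambda \J$ (with the convention $\Di^k_\infty = 0$), evaluating at $r$ produces either an identity or the unit map $\Q \to \Lambda D^k$, whose realization is either an identity or the canonical map $\langle \Lambda D^k \rangle \to \Delta^0$. Since $\Q \to \Lambda D^k$ is a trivial cofibration in $\CDGA_\Q$, \Cref{quillen_adjunction} again guarantees that $\langle \Lambda D^k \rangle \to \Delta^0$ is a trivial $\Q$-fibration of simplicial sets. So the image is pointwise a trivial $\Q$-fibration, i.e., a trivial fibration in the projective model structure on $p^*\sSet$, and hence a trivial cofibration in $(p^*\sSet)^\op$.

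The main obstacle is largely notational bookkeeping: $\langle - \rangle$ is contravariant and the target carries the opposite model structure, so one must carefully unpack what it means to preserve (trivial) cofibrations in this two-fold opposite setting. Once both $\langle - \rangle$ and the projective model structure on $p^*\sSet$ are understood pointwise, the Quillen pair property transfers level by level from the classical Bousfield–Gugenheim theorem.
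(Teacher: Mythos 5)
Your proposal is correct and follows essentially the same route as the paper's proof: reduce to the generating (trivial) cofibrations $\Lambda\I$ and $\Lambda\J$, observe that their components at each index are identities, the Hirsch extension $\Lambda S^k \subseteq \Lambda D^k$, or the acyclic extension $\Q \to \Lambda D^k$, and invoke the classical Bousfield--Gugenheim Quillen adjunction to conclude that the pointwise realizations are (trivial) $\Q$-fibrations, i.e., (trivial) cofibrations in $(p^*\sSet)^\op$ with its projective-opposite structure. The only difference is expository: you spell out the variance bookkeeping and the $t=\infty$ convention slightly more explicitly than the paper does.
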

\begin{proof}
    It suffices to check that $\langle - \rangle$ sends the generating (trivial) cofibrations in $p\CDGA_\Q$ to (trivial) $\Q$-fibrations in $p^*\sSet$. Given a map $f \in \Lambda \I$ of the form $\Lambda\Sp^k_{[s,t)}\to \Lambda\Di^k_s$, its component $\langle f \rangle (u)=\langle f(u) \rangle$ is the map induced on spatial realizations by the Hirsch extension 
    $$
    \Lambda S^k \subseteq \Lambda D^k 
    $$
    when $s\leq u < t$ and an isomorphism otherwise. These maps are all $\Q$-fibrations as a consequence of \Cref{quillen_adjunction}. Similarly, the components of any $f\in \Lambda \J$ are Hirsch extensions and quasi-isomorphisms, hence map to trivial $\Q$-fibrations under spatial realization. 
\end{proof}

\bibliography{misc/bibliography}
\bibliographystyle{alpha} 
\end{document}